\numberwithin{equation}{section}
\numberwithin{figure}{section}
\theoremstyle{plain}
\newtheorem{thm}{Theorem}[section]
\newtheorem{lem}[thm]{Lemma}
\newtheorem{cor}[thm]{Corollary}
\newtheorem{prop}[thm]{Proposition}
\theoremstyle{definition}
\newtheorem{defn}[thm]{Definition}
\newcommand{\Pf}{\mathrm{Pf}}
\begin{document}

\title{Almost commuting unitary matrices related to time reversal}

\author{Terry A. Loring and Adam P. W. S\o rensen }

\address{University of New Mexico, Department of Mathematics and Statistics,
Albuquerque, New Mexico, 87131, USA}

\address{Department of Mathematical Sciences, University of Copenhagen,
Universitetsparken 5, DK-2100, Copenhagen \O , Denmark}

\begin{abstract}
The behavior of fermionic systems depends on the geometry of the system
and the symmetry class of the Hamiltonian and observables. Almost
commuting matrices arise from band-projected position observables
in such systems. One expects the mathematical behavior of almost commuting
Hermitian matrices to depend on two factors. One factor will be the
approximate polynomial relations satisfied by the matrices. The other
factor is what algebra the matrices are in, either $\mathbf{M}_{n}(\mathbb{A})$
for $\mathbb{A}=\mathbb{R}$, $\mathbb{A}=\mathbb{C}$ or $\mathbb{A}=\mathbb{H}$,
the algebra of quaternions. 

There are potential obstructions keeping $k$-tuples of almost commuting operators
from being close to a commuting $k$-tuple. We consider two-dimensional
geometries and so this obstruction lives in $KO_{-2}(\mathbb{A})$.
This obstruction corresponds to either the Chern number or spin Chern
number in physics. We show that if this obstruction is the trivial
element in $K$-theory then the approximation by commuting matrices
is possible.
\end{abstract}

\maketitle
\markright{Almost commuting unitary matrices related to time reversal}

\section{Introduction}

\subsection{Approximate representations }

Consider the two sets of relations
\[
\framebox[2.3in]{\ensuremath{
\mathcal{S}_{\delta}
\begin{array}[t]{c}
 X_{r}^{*}=X_{r}\\
\left\Vert X_{r}X_{s}-X_{s}X_{r}\right\Vert \leq\delta\\
\left\Vert X_{1}^{2}+X_{2}^{2}+X_{3}^{2}-I\right\Vert \leq\delta\\
\strut\end{array}
}}
\ 
\framebox[2.0in]{\ensuremath{
\mathcal{T}_{\delta}^{\prime}
\ 
\begin{array}[t]{c}
 X_{r}^{*}=X_{r}\\
\left\Vert X_{r}X_{s}-X_{s}X_{r}\right\Vert \leq\delta\\
\left\Vert X_{1}^{2}+X_{2}^{2}-I\right\Vert \leq\delta\\
\left\Vert X_{3}^{2}+X_{4}^{2}-I\right\Vert \leq\delta
\end{array}
}}
\]
that we call the soft sphere relations, $\mathcal{S}_{\delta}$, in
matrix unknowns $X_{1}$, $X_{2}$, $X_{3}$, and the soft torus relations,
$\mathcal{T}_{\delta}^{\prime}$, in matrix unknowns $X_{1}$, $X_{2}$,
$X_{3}$, $X_{4}$. Our main results concern the operator-norm distance
from a representation of $\mathcal{S}_{\delta}$ or
$\mathcal{T}_{\delta}^{\prime}$ to representations of
$\mathcal{S}_{0}$ or $\mathcal{T}_{0}^{\prime}$.  We show that
the distance one must move away from the $X_{r}$
to find Hermitian matrices that are commuting goes to zero as
the commutator norm goes to zero.

We show that when the $X_{r}$ are taken as variables in
$\mathbf{M}_{n}(\mathbb{R})$, then representations of
$\mathcal{S}_{\delta}$ are, in a uniform fashion, close
to representations of $\mathcal{S}_{0}$. When the
$X_{r}$ are taken as variables in $\mathbf{M}_{n}(\mathbb{C})$,
there is an obstruction in $\mathbb{Z}$ that dictates the possibility
of such an approximation. When the $X_{r}$ are taken as variables
in $\mathbf{M}_{n}(\mathbb{H})$, (the algebra of quaternions) there
is again an obstruction, but now in $\mathbb{Z}_{2}$. For
$X_{r}\in\mathbf{M}_{n}(\mathbb{A})$ in these three cases,
the obstruction is formally defined to be in $KO_{-2}(\mathbb{A})$
and we prove this is the only obstruction to the desired
approximation by commuting Hermitian matrices. 

The complex case of our results were proven in \cite[Corollary 13]{LoringWhenMatricesCommute}
and \cite[Corollary 6.15]{ELP-pushBusby}, using the techniques of semiprojectivity
and $K$-theory for $C^{*}$-algebras. The connection of these matrix
results to condensed matter physics was not noticed until many years
later \cite{hastings2008topology,HastingsLoringWannier,HastLorTheoryPractice,LorHastHgTe}.
Of course the relevance of the $K$-theory of $C^{*}$-algebras to
condensed matter physics was known earlier \cite{BellissardNCGquantumHall}.

\subsection{Structured complex matrices}

Quaternionic matrices arrive in disguise in phsyics via the isometric
emdedding
$\chi \colon \mathbf{M}_{N}(\mathbb{H})\rightarrow\mathbf{M}_{2N}(\mathbb{C})$
defined as
\[
\chi\left(A+B\hat{j}\right)=\left[\begin{array}{cc}
A & B\\
-\overline{B} & \overline{A}\end{array}\right]
\]
for complex matrices $A$ and $B$. The image of $\chi$ can be described
as the matrices that commute with the antiunitary operator
\[
\mathcal{T}\xi=-Z\overline{\xi}
\]
where $\xi$ is in $\mathbb{C}^{2N}$ and
\[
Z=Z_{N}=\begin{bmatrix}0 & I\\
-I & 0\end{bmatrix}.
\]

In a finite model, $\mathcal{T}$ is typically playing the role of
time-reversal. From a purely mathematical standpoint, it is generally
easier to think in terms of the \emph{dual operation}
\begin{equation}
\left[\begin{array}{cc}
A & B\\
C & D\end{array}\right]^{\sharp}=\left[\begin{array}{cc}
D^{\mathrm{T}} & -B^{\mathrm{T}}\\
-C^{\mathrm{T}} & A^{\mathrm{T}}\end{array}\right]
\label{eq:dual-def-1}
\end{equation}
alternatively defined as
\begin{equation}
X^{\sharp}=-ZX^{\mathrm{T}}Z.
\label{eq:dual-def-2}
\end{equation}
The image of $\chi$ is the set of matrices with $X^{*}=X^{\sharp}$.
We are using mathematical notation, so $X^{*}$ refers to the
conjugate-transpose of $X$.

Similarly, we think of a real matrix $X$ as a complex matrix for
which $X^{*}=X^{\mathrm{T}}$. 

A good survey paper regarding the equivalence of matrices of quaternions
and structured complex matrices is \cite{FarenickSpectralThmquatern}.
This does not address norms on $\mathbf{M}_{N}(\mathbb{H})$.

The norm we consider here is that induced by the operator norm on
$\mathbf{M}_{2N}(\mathbb{C})$. Thus we simply use $\left\Vert A\right\Vert $
to denote the operator norm (a.k.a.~the spectral norm) of a complex
matrix and define for $X$ in $\mathbf{M}_{N}(\mathbb{H})$ the norm
to be
\[
\left\Vert X\right\Vert =\left\Vert \chi\left(X\right)\right\Vert .
\]
This norm can be seen to be the norm induced by the action of $X$
on $\mathbb{H}^{N}$, but that is not important. In the same way,
we could think about the norm of a real matrix induced by its action
on $\mathbb{R}^{n}$ but prefer to consider this norm as being defined
via its action on $\mathbb{C}^{n}$. 

All theorems will be stated in terms of complex matrices, possibly
respecting an additional symmetry such as being self-dual. This allows
us the freedom to combine two Hermitian matrices $X_{1}$ and $X_{2}$
into one matrix \[
U=X_{1}+iX_{2}.\]
If $X_{1}$ and $X_{2}$ are self-dual, then they are in
$\chi(\mathbf{M}_{N}(\mathbb{H}))$, whereas $U$ is self-dual
but most likely $U^{*}\neq U^{\sharp}$.  That is, $U$ cannot be
assumed to be in $\chi(\mathbf{M}_{N}(\mathbb{H}))$.
We abandon $\mathcal{T}_{\delta}^{\prime}$ in favor of the relations
$\mathcal{T}_{\delta}$ 
\[
\framebox[2.0in]{\ensuremath{\mathcal{T}_{\delta}}\ \ensuremath{\begin{array}[t]{c}
 U_{r}^{*}U_{r}=U_{r}U_{r}^{*}=I\\
\left\Vert U_{1}U_{2}-U_{2}U_{1}\right\Vert \leq\delta\end{array}}}
\]
which we no longer apply elements of $\mathbf{M}_{N}(\mathbb{H})$.
(They \emph{can} be applied there, but doing so leads to different
question that have less obvious connections to physics.) Instead we
apply them to complex matrices $U_{1}$ and $U_{2}$ and then add
as appropriate $U_{r}^{\sharp}=U_{r}$ or $U_{r}^{\mathrm{T}}=U_{r}$. 

\emph{Beyond this point, matrices are assumed to be complex.}

\subsection{Real $C^{*}$-algebras}

Most of our theorems are statements about Real $C^{*}$-algebras.
More specifically, we consider $C^{*,\tau}$-algebras. This is an
ordinary (so complex) $C^{*}$-algebra $A$ with the additional structure
of an \emph{anti-multiplicative}, $\mathbb{C}$-linear map $\tau \colon A\rightarrow A$
for which $\tau(\tau(a))=a$ and $\tau(a^{*})=\tau(a)^{*}$. 

We prefer the notation $a\mapsto a^{\tau}$ to keep close to our essential
examples $\left(\mathbf{M}_{n}(\mathbb{C}),\mathrm{T}\right)$ and
$\left(\mathbf{M}_{n}(\mathbb{C}),\sharp\right)$. For background
of this perspective see \cite{HastLorTheoryPractice}. For a reference
that uses more traditional notation, see \cite{li2003real}. Most
importantly, our proofs rely on many results from our previous paper
\cite{LorSorensenDisk}. This deals with the same approximation-by-commuting
question, but with the equations $\mathcal{D}_{\delta}$ 
\[
\framebox[2.3in]{\ensuremath{
\mathcal{D}_{\delta}
\ \ \ \ 
\begin{array}[t]{c}
 X_{r}^{*}=X_{r}\\
\left\Vert X_{1}X_{2}-X_{2}X_{1}\right\Vert \leq\delta\\
\left\Vert X_{1} \right\Vert, \left \Vert X_{2} \right\Vert \leq 1
\end{array}
}}
\]
in two matrix variables. The underlying geometry is the disk and so
the potential for an obstruction in $K$-theory is eliminated. Hermitian
almost commuting matrices are always close to commuting Hermitian
matrices. For complex matrices, this is Lin's theorem
\cite{LinAlmostCommutingHermitian}, and the result stays true in
the real or self-dual case.

Our approach is to move from the disk to the sphere along the lines
of \cite{LoringWhenMatricesCommute}, this involves reformulating
the problem as a lifting problem. 
Then we move from the sphere to
the torus using the interplay between push-out diagrams and extensions,
generalizing results from \cite{ELP-pushBusby}.

\subsection{Band-projected position matrices}

Consider a lattice model with Hamiltonian $H=H^{*}$, for a single
particle, tight-binding model on a surface in $d$-space determined
by some equations $p(x_{1},\dots,x_{d})=0$. The position matrices
in the model will be diagonal, so matrices $\hat{X}_{r}$
with $[\hat{X}_{r},\hat{X}_{s}]=0$ and $p(\hat{X}_{1},\dots,\hat{X}_{d})=0$.
Under some assumptions, essentially that we have a spectral gap and
local interactions, the Hamiltonian will approximately commute with
the position matrices. Let $P$ denote the projection onto the states
below the Fermi energy. When we form the band-projected position matrices
$X_{r}=P\hat{X}_{r}P$ we arrive at matrices that are only almost
commuting. Depending on the universality class \cite{mehta2004random}
of the system, both $H$ and $\hat{X}_{r}$ will have extra symmetries,
as will the $X_{r}$. For example, time reversal invariance will result
in these matrices being self-dual.

There are important details (see \cite{HastLorTheoryPractice}) needed to
correct for the fact that the $X_{r}$ do not have full rank and that
the equations need to change to allow for larger physical size of
the lattice when the number of lattice size increases. 
In many interesting cases the result is matrices $X_{1}^{(n)},\dots,X_{d}^{(n)}$ of increasing size with
\[
\left\Vert \left[X_{r}^{(n)},X_{s}^{(n)}\right]\right\Vert
\rightarrow 0
\]
and
\[
\left\Vert p(X_{1},\dots,X_{d})\right\Vert \rightarrow 0.
\]
When the lattice geometry is the two-torus, the resulting obstruction
to the matrix approximation problem is an integer that corresponds
to the Chern number. When time reversal invariance is assumed, the
extra symmetry leads to an obstruction in $\mathbb{Z}_{2}$. This
obstruction, for the self-dual matrix approximation problem, corresponds
to the spin Chern number used to detect two-dimensional topological
insulators. Changing the geometry to a three-torus leads to an obstruction
in $KO_{-3}(\mathbb{H})\cong\mathbb{Z}_{2}$. There is numerical evidence
\cite{HastLorTheoryPractice}, and the $K$-homology arguments of
\cite{kitaev-2009} and \cite[\S III]{freedman2011projective}, that
this obstruction will be useful for detecting three-dimensional topological
insulators.

\subsection{The obstructions}

When $X$ is any invertible matrix, we define $\mathrm{polar}(X)$ as the unitary in the polar decomposition.
That is, it is the {\em polar part} of $X$ or, in terms of the functional calculus,
\[
\mathrm{polar}(X) = X\left( X^* X \right) ^{-\frac{1}{2}} .
\]

Consider $H_{1}$, $H_{2}$ and $H_{3}$ that are a representation
of $\mathcal{S}_{\delta}$ with $\delta<\tfrac{1}{4}$. We define
\[
B(H_{1},H_{2},H_{3})=\left[\begin{array}{cc}
H_{3} & H_{1}+iH_{2}\\
H_{1}-iH_{2} & -H_{3}\end{array}\right],
\]
which will be Hermitian and invertible \cite[Lemma 3.2]{Loring-K-thryAsymCommMatrices}.
The obstruction to being close to commuting is defined as the $K$-theory class
\[
	\mathrm{Bott}(H_{1},H_{2},H_{3}) = \left[\strut\mathrm{polar}\left(B(H_{1},H_{2},H)\right)\right]\in K_{0}(\mathbb{C}).
\]
Here we adopt the convention that $K_{0}(A)$ is defined via homotopy
classes of self-adjoint unitary elements in $\mathbf{M}_{n}(A)$.
This is not the standard view in terms of projections, but is equivalent
by a simple shift and rescaling. This is for $A$ a unital $C^{*}$-algebra.

A more computable description is to use the signature, meaning half
of the number of positive eigenvalues of $B$ minus half the number
of negative eigenvalues. We call $\mathrm{Bott}(H_{1},H_{2},H_{3})$
the \emph{Bott index}.

(We are off by a minus sign from the definition in \cite{HastingsLoringWannier}.
The work in \cite{Loring-K-thryAsymCommMatrices} was done without
noticing the role of the Pauli spin matrices.)

Given a $C^{*,\tau}$-algebra $\left(A,\tau\right)$ we regard $K_{2}(A,\tau)$
as defined via classes of invertible in $\mathbf{M}_{2n}(A)$
with $x^* = x$ and $x^\tau = -x$.  In \cite{HastLorTheoryPractice} we worked
with inveribles with $x^* = -x$ and $x^\tau = -x$.  These are equivalent
and the conversion from one picture to the other is done simply by multiplying
by $i$.

When $H_{r}=H_{r}^{\sharp}$ for all $r$, or $H_{r}=H_{r}^{\mathrm{T}}$,
the Bott index vanishes so it is possible to approximate by commuting
Hermitian matrices. Notice these nearby commuting matrices will be
only approximately self-dual or symmetric. In the self-dual case a
new obstruction arises when we try to approximate by matrices that
are at once commuting, Hermitian and self-dual. Here the larger matrix
$B(H_{1},H_{2},H_{3})$ satisfies
\[
\left(B(H_{1},H_{2},H_{3})\right)^{\sharp\otimes\sharp}=B(H_{1},H_{2},H_{3}).
\]
Noticing that 
\[
K_{2}\left(\mathbf{M}_{4N}(\mathbb{C}),\sharp\otimes\sharp\right)\cong\mathbb{Z}_{2},
\]
the first named author and Hastings defined in \cite{HastLorTheoryPractice} the
\emph{Pfaffian-Bott index,} denoted 
\[
\mathrm{Pf}\!\mathrm{-}\!\mathrm{Bott}(H_{1},H_{2},H_{3}),
\]
as the $K$-theory class 
\[
\left[\strut\mathrm{polar}\left(B(H_{1},H_{2},H_{3})\right)\right]
\in
K_{2}(\mathbf{M}_{4N}(\mathbb{C}),\sharp\otimes\sharp)\cong\mathbb{Z}_{2}.
\]

Much of the work in \cite{LorHastHgTe} was in demonstrating that
the Pfaffian-Bott index can be efficiently computed numerically using
a Pfaffian. The Pfaffian cannot be applied directly, but conjugating
$B(H_{1},H_{2},H_{3})$ by a fixed unitary leads to a purely imaginary,
skew-symmetric matrix. 
The sign of the Pfaffian of that matrix tells us which $K_{2}$-class contains $\mathrm{polar}(B(H_{1},H_{2},H_{3}))$.

\subsection{Main theorems\label{sub:Main-theorems}}

We state now two of our four main theorems, along with the complex
version. For consistency with \cite{LorHastHgTe}, we regard the Bott
index as an element of $\mathbb{Z}$ and the Pfaffian-Bott index as
an element of the multiplicative group $\{\pm1\}$. 

\begin{thm}
\label{thm:SphereComplex}
{\rm (}\cite[Corollary 13]{LoringWhenMatricesCommute}{\rm )}
For every $\epsilon>0$ there exists a $\delta>0$ so that whenever
matrices $H_{1},H_{2},H_{3}$ form a representation of
$\mathcal{S}_{\delta}$, and
\[
\mathrm{Bott}(H_{1},H_{2},H_{3})=0,
\]
there are matrices $ $$K_{1},K_{2},K_{3}$ that form a representation
of $\mathcal{S}_{0}$ and so that
\[
\left\Vert K_{r}-H_{r}\right\Vert \leq\epsilon\quad(r=1,2,3).
\]

\end{thm}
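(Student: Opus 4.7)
The plan is to recast the theorem as a lifting problem for the $C^{*}$-algebra $C(S^{2})$ and reduce it to the disk case handled in \cite{LorSorensenDisk}. A representation of $\mathcal{S}_{\delta}$ in $\mathbf{M}_{n}(\mathbb{C})$ is, up to a small loss that tends to $0$ with $\delta$, an approximate $*$-homomorphism from the universal $C^{*}$-algebra $C(S^{2})$ — generated by three self-adjoints $x_{1},x_{2},x_{3}$ with $x_{1}^{2}+x_{2}^{2}+x_{3}^{2}=1$ — into $\mathbf{M}_{n}(\mathbb{C})$, sending $x_{r}$ to $H_{r}$. The theorem then asserts that this approximate $*$-homomorphism is close in operator norm to an exact one whenever its induced map on $\widetilde{K}_{0}(C(S^{2}))\cong\mathbb{Z}$ is trivial.

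The first step is to present $C(S^{2})$ as the pushout of two copies of the disk algebra $C(D^{2})$ along $C(S^{1})$, reflecting the decomposition of $S^{2}$ into two closed hemispheres glued along the equator. Using continuous functional calculus on $H_{3}$, I would compress the pair $(H_{1},H_{2})$ to the approximate spectral subspace where $H_{3}\geq 0$, and separately to the one where $H_{3}\leq 0$. Using $H_{1}^{2}+H_{2}^{2}\approx I-H_{3}^{2}$, each compressed pair is Hermitian, almost commuting, and of norm at most $1+O(\delta)$; that is, it forms an approximate representation of $\mathcal{D}_{\delta'}$ for some $\delta'\to 0$ as $\delta\to 0$.

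The second step is to apply the disk theorem from \cite{LorSorensenDisk} on each hemisphere to obtain commuting Hermitian triples $(K_{1}^{\pm},K_{2}^{\pm},K_{3}^{\pm})$ close to the originals on the corresponding spectral subspaces. To combine these into a single commuting triple globally close to $(H_{1},H_{2},H_{3})$ one must glue along the equator, where each hemisphere contributes a commuting pair with joint spectrum in $S^{1}$, equivalently a unitary $U^{\pm}=K_{1}^{\pm}+iK_{2}^{\pm}$. The gluing succeeds precisely when $U^{+}$ and $U^{-}$ represent the same class in $K_{1}(C(S^{1}))\cong\mathbb{Z}$; this class equals $\mathrm{Bott}(H_{1},H_{2},H_{3})$, the $K_{0}$-class of $\mathrm{polar}(B(H_{1},H_{2},H_{3}))$, which we have assumed to vanish.

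The main obstacle is making the gluing quantitative, because the two hemispheres have different spectral ranks and the disk lifts are only approximate. I would handle this by a semiprojectivity argument in the spirit of \cite{LoringWhenMatricesCommute}: since $C(S^{2})$ is semiprojective, if the theorem failed one could extract a sequence of representations of $\mathcal{S}_{\delta_{n}}$ with $\delta_{n}\to 0$ admitting no close exact lifts, combine them into a $*$-homomorphism from $C(S^{2})$ into the sequence algebra $\prod \mathbf{M}_{k_{n}}(\mathbb{C})/\bigoplus \mathbf{M}_{k_{n}}(\mathbb{C})$, and observe that by continuity of the Bott index this limit homomorphism has trivial class in $\widetilde{K}_{0}(C(S^{2}))$. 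Semiprojectivity then lifts it back to exact representations at each finite stage, contradicting the failure hypothesis and completing the proof.
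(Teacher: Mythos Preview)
Your proposal contains a fatal error in the final paragraph: $C(S^{2})$ is \emph{not} semiprojective. Indeed, the failure of semiprojectivity for $C(S^{2})$ is precisely the content of the theorem you are trying to prove---the Bott index is the obstruction to lifting, and the theorem says that when this obstruction vanishes, lifting is possible. If $C(S^{2})$ were semiprojective, every $*$-homomorphism into $\prod\mathbf{M}_{k_{n}}(\mathbb{C})/\bigoplus\mathbf{M}_{k_{n}}(\mathbb{C})$ would lift regardless of its $K$-theory, and the hypothesis $\mathrm{Bott}(H_{1},H_{2},H_{3})=0$ would be superfluous. Your contradiction argument is therefore circular: you invoke exactly the lifting property you are attempting to establish.

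The hemisphere decomposition in your first two steps is a reasonable geometric idea, but you correctly identify the gluing as the main obstacle and then fail to resolve it. The paper (following \cite{LoringWhenMatricesCommute}) takes a different route that avoids gluing altogether. Rather than splitting $S^{2}$ into two disks, it passes to \emph{cylindrical} coordinates: one shows that the given $\varphi\colon C(S^{2})\to\prod\mathbf{M}_{d_{n}}(\mathbb{C})/\bigoplus\mathbf{M}_{d_{n}}(\mathbb{C})$ extends along the inclusion $C(S^{2})\hookrightarrow C(S^{1}\times[-1,1])$ induced by collapsing the two boundary circles of the cylinder to the poles. The $K$-theory hypothesis enters here: vanishing of the Bott index means the Bott matrix $S=B(H_{1},H_{2},H_{3})$ can be conjugated (in the quotient) to $\mathrm{diag}(I,-I)$ by a unitary $W=\begin{bmatrix}A & B\\ -B^{*} & A^{*}\end{bmatrix}$, and the unitary $U=\mathrm{polar}(A)^{*}\mathrm{polar}(B)$ together with $H_{3}$ furnishes the extension to the cylinder. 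This extension then factors through $C(\mathbb{D})$ via the degree-one map $\mathbb{D}\to S^{2}$, and $C(\mathbb{D})$ lifts by Lin's theorem. Thus the paper replaces your two-disk gluing by a single extension-then-retraction, with the $K$-theory used to \emph{construct} the extra unitary rather than to match two independently chosen lifts.
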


\begin{thm}
\label{thm:SphereReal}
For every $\epsilon>0$ there exists a $\delta>0$
so that whenever $H_{1},H_{2},H_{3}$ are complex symmetric matrices
that form a representation of $\mathcal{S}_{\delta}$, there are complex
symmetric matrices $K_{1},K_{2},K_{3}$ that form a representation
of $\mathcal{S}_{0}$ and so that
\[
\left\Vert K_{r}-H_{r}\right\Vert \leq\epsilon\quad(r=1,2,3).
\]
\end{thm}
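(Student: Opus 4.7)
The plan is to prove the statement by mirroring the proof of Theorem \ref{thm:SphereComplex} within the category of $C^{*,\tau}$-algebras, with the symmetric case of Lin's theorem from \cite{LorSorensenDisk} playing the role the complex version plays there. The first observation is that no $K$-theoretic hypothesis on the $H_r$ is needed: the Bott index vanishes automatically for symmetric inputs. Let $\Gamma = \left[\begin{array}{cc} 0 & iI_N \\ -iI_N & 0 \end{array}\right]$ and consider the anti-unitary $C\xi = \Gamma \bar\xi$ on $\mathbb{C}^{2N}$. A direct calculation using $H_r^{\mathrm{T}} = H_r$ (so that the off-diagonal block $U = H_1 + iH_2$ satisfies $\bar U = U^*$) shows $\Gamma \overline{B(H_1,H_2,H_3)} \Gamma = -B(H_1,H_2,H_3)$, while $\Gamma\bar\Gamma = -I$; hence $CBC^{-1} = -B$ and $C^2 = -I$. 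This particle-hole-type symmetry pairs each eigenvector $v$ of $B$ of eigenvalue $\lambda$ with a linearly independent $Cv$ of eigenvalue $-\lambda$, so the spectrum of $B$ is symmetric about zero and $\mathrm{Bott}(H_1,H_2,H_3) = 0$.

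Having the obstruction out of the way, I would reformulate the approximation problem as a lifting problem following \cite{LoringWhenMatricesCommute}: a symmetric representation of $\mathcal{S}_\delta$ is essentially an approximately multiplicative $\tau$-equivariant map $\phi : (C(S^2),\bar\cdot) \to (\mathbf{M}_n(\mathbb{C}),\mathrm{T})$, and we wish to perturb $\phi$ to a genuine $*$-homomorphism in the same category. To exploit the disk result, split the sphere along the equator $\{H_3 = 0\}$ by applying a smooth partition of unity to $H_3$; this realizes $(C(S^2),\bar\cdot)$ as the push-out of two copies of $(C(D^2),\bar\cdot)$ glued along $(C(S^1),\bar\cdot)$. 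On each hemisphere the restricted relations degenerate to the soft disk relations $\mathcal{D}_\delta$ in two symmetric variables, and the symmetric version of Lin's theorem from \cite{LorSorensenDisk} produces commuting symmetric matrices close to $(H_1, H_2)$ there.

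The main obstacle is the gluing step: the two commuting symmetric pairs produced on the hemispheres must be reconciled on the equatorial circle, and for this one must carry out, in the real/symmetric $C^{*,\tau}$ setting, the push-out and extension arguments sketched for the complex case in \cite{ELP-pushBusby}. The automatic vanishing of the Bott index established above is precisely what ensures the $K$-theoretic obstruction to this gluing is trivial, so the two hemispheric approximations can be chosen to agree on the equator after a small further symmetric perturbation. Assembling them gives commuting symmetric $K_1, K_2, K_3$ with $\|K_r - H_r\| \leq \epsilon$ and $K_1^2 + K_2^2 + K_3^2 = I$; the uniform dependence of $\delta$ on $\epsilon$ is obtained by tracking constants through the finitely many reductions.
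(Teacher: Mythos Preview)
Your opening computation is correct and agrees with the paper: when $H_r^{\mathrm{T}}=H_r$ the Bott matrix satisfies $B^{\sharp}=-B$ (your antiunitary $C$ is exactly the operator implementing $\sharp$), and this is the paper's reason no $K$-theoretic hypothesis appears in the symmetric case---the relevant group $K_2(\mathbf{M}_{2N}(\mathbb{C}),\sharp)$ is trivial.

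The rest of the proposal, however, does \emph{not} mirror \cite{LoringWhenMatricesCommute}, and the step you flag as ``the main obstacle'' is a genuine gap rather than a routine verification. Splitting $S^2$ as two hemispheres glued along the equator and lifting each hemisphere separately via \cite{LorSorensenDisk} produces two lifts whose restrictions to $C(S^1)$ are two \emph{different} symmetric unitaries in the product, agreeing only in the quotient. Arranging for them to match is not a ``small further symmetric perturbation'': it is a relative lifting problem for $C(D^2)$ with prescribed boundary data, which is essentially equivalent to the sphere problem you are trying to solve and is not covered by the disk result in \cite{LorSorensenDisk}. Knowing abstractly that a $K$-class vanishes does not supply the mechanism; you would need a concrete uniqueness-up-to-homotopy statement for symmetric-unitary lifts that you have neither stated nor proved. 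The push-out machinery of \cite{ELP-pushBusby} you invoke is used in this paper for the passage from sphere to torus (Section~5), not for assembling the sphere from two disks.

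The paper's route, following \cite{LoringWhenMatricesCommute}, sidesteps the gluing entirely. The vanishing of $K_2(\mathbf{M}_{2N}(\mathbb{C}),\sharp)$ is used \emph{constructively}: the symplectic diagonalization of the Hermitian anti-self-dual matrix $B(H_1,H_2,H_3)$ (Theorem~\ref{thm:K2_of_Quaternions}) manufactures a \emph{symmetric} unitary $U$ commuting with $H_3$ and satisfying $U(1-H_3^2)^{1/2}=H_1+iH_2$. This extends the map on $C(S^2)$ to $C(S^1\times[-1,1])$ (Theorem~\ref{thm:extendSphereReal}); composing with the degree-one map $\mathbb{D}\to S^2$ then gives an extension to a \emph{single} copy of $C(\mathbb{D})$, which lifts to the product by \cite{LorSorensenDisk}. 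No equatorial matching is required.
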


\begin{thm}
\label{thm:SphereQuaternion}
For every $\epsilon>0$ there exists a $\delta>0$ so that
whenever $H_{1},H_{2},H_{3}$ are self-dual matrices that form
a representation of $\mathcal{S}_{\delta}$, and
\[
\mathrm{Pf}\!\mathrm{-}\!\mathrm{Bott}(H_{1},H_{2},H_{3})=1,
\]
there are self-dual matrices $K_{1},K_{2},K_{3}$ that form a representation
of $\mathcal{S}_{0}$ and so that
\[
\left\Vert K_{r}-H_{r}\right\Vert \leq\epsilon\quad(r=1,2,3).
\]

\end{thm}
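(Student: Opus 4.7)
The plan is to mirror the approach used for Theorem \ref{thm:SphereComplex} while carefully tracking the $\sharp$-symmetry. First I would recast the statement as a lifting problem for $C^{*,\tau}$-algebras: a self-dual representation of $\mathcal{S}_\delta$ corresponds to an approximately multiplicative map from the soft-sphere $C^{*,\tau}$-algebra into $(\mathbf{M}_n(\mathbb{C}),\sharp)$, and the goal is to lift such a map, up to error $\epsilon$, to a genuine $\ast$-homomorphism from the commutative sphere $C(S^2)$ with its natural $\tau$-structure. A standard compactness/ultrafilter reduction turns this into the question of lifting an honest $\tau$-respecting $\ast$-homomorphism into an ultraproduct of self-dual matrix algebras, provided the $K$-theoretic obstruction can be controlled.

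Next I would decompose $S^2$ into two closed hemispheres and represent $(C(S^2),\tau_s)$ as a pullback of two copies of $(C(D^2),\tau_d)$ over $(C(S^1),\tau_e)$, with $\tau$-structures chosen compatibly with $\sharp$. On each hemisphere the problem becomes that of approximating an almost-commuting self-dual Hermitian pair of contractions by a commuting self-dual pair; this is exactly the self-dual version of Lin's theorem with relations $\mathcal{D}_\delta$ established in \cite{LorSorensenDisk}. Applying it on each hemisphere produces two commuting self-dual pairs that almost constitute a sphere representation, except possibly on a collar of the equator.

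The core step is to measure the obstruction to matching the two hemispheric commuting approximations along the equator. Via the Mayer--Vietoris / pullback exact sequence associated with $(C(S^2),\tau_s)$, this obstruction lives in a $K_2$-group of the equatorial $C^{*,\tau}$-algebra, which for the self-dual case is $\mathbb{Z}_2$. I would then identify this abstract obstruction with the concrete Pfaffian-Bott index
\[
\left[\mathrm{polar}\bigl(B(H_1,H_2,H_3)\bigr)\right]\in K_2\bigl(\mathbf{M}_{4N}(\mathbb{C}),\sharp\otimes\sharp\bigr)\cong\mathbb{Z}_2.
\]
Since $\mathrm{Pf}\!\mathrm{-}\!\mathrm{Bott}(H_1,H_2,H_3)=1$, this class is trivial, so a null-homotopy of the corresponding equatorial class can be used to adjust the hemispheric commuting approximations on a collar of the equator until they agree there, yielding the desired commuting self-dual triple $K_1,K_2,K_3$.

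The main obstacle will be identifying the abstract pullback/lifting obstruction with the explicit Pfaffian-Bott class defined via $B(H_1,H_2,H_3)$. This requires tracing classes through the Mayer--Vietoris boundary map and using the description of $K_2(\mathbf{M}_{4N}(\mathbb{C}),\sharp\otimes\sharp)$ via self-adjoint invertibles $x$ with $x^\tau=-x$ made explicit in the preamble to the theorem. A secondary difficulty is keeping every step of the approximation and of the gluing visibly $\sharp$-equivariant, so that the self-dual Lin theorem of \cite{LorSorensenDisk} can be invoked in its correct form and the final $K_r$ come out genuinely self-dual rather than only approximately so.
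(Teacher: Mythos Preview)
Your proposal takes a genuinely different route from the paper's proof. The paper does not decompose $S^2$ into hemispheres and glue via Mayer--Vietoris. Instead, after the same reduction to a lifting problem for $\varphi \colon C(S^2,\mathrm{id}) \to \prod(\mathbf{M}_{2d_n}(\mathbb{C}),\sharp)\big/\bigoplus(\mathbf{M}_{2d_n}(\mathbb{C}),\sharp)$, it uses the triviality of the Pfaffian--Bott index \emph{constructively}: Theorem~\ref{thm:K2_of_Reals(twisted)} produces a unitary $W$ with $W^{\sharp\otimes\sharp}=W^{*}$ that (in the quotient) exactly diagonalizes the Bott matrix $B(H_1,H_2,H_3)$, and from the polar parts of the blocks of $W$ one explicitly builds a self-dual unitary $U$ commuting with $H_3$ and satisfying $U(1-H_3^{2})^{1/2}=H_1+iH_2$ (this is Theorem~\ref{thm:extendSphereSelfDual}). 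That is a change to cylindrical coordinates, factoring $\varphi$ through $C(\mathbb{D},\mathrm{id})$, which is then lifted by a \emph{single} application of the self-dual Lin theorem from \cite{LorSorensenDisk}.

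Your approach instead applies self-dual Lin twice and must then glue, with the Pfaffian--Bott index appearing as a gluing obstruction rather than as input to a construction. This is plausible in outline, but it leaves real work exactly where you say: identifying the Mayer--Vietoris boundary class with the explicit class of $\mathrm{polar}(B(H_1,H_2,H_3))$ in $K_2(\mathbf{M}_{4N}(\mathbb{C}),\sharp\otimes\sharp)$, and also making precise what ``applying Lin on a hemisphere'' means for a triple $(H_1,H_2,H_3)$ before one has chosen coordinates reducing to a pair of contractions. The paper's block-matrix construction sidesteps both issues; your route is more modular but would need those two points filled in before it becomes a proof.
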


Theorems~\ref{thm:SphereReal} and \ref{thm:SphereQuaternion} settle
Conjectures 3 and 4 from \cite[\S VI.C]{HastingsLoringWannier}, while
Conjectures 1 and 2 from that paper were settled in our earlier paper \cite{LorSorensenDisk}.

To get theorems about unitary matrices, we utilize an old
trick from \cite{Loring-K-thryAsymCommMatrices} to turn a representation
of $\mathcal{T}_{\delta}$ into a representation
of $\mathcal{S}_{\delta}$, but modified, as in \cite{HastLorTheoryPractice},
to account for the additional symmetry. We define nonnegative real-valued
functions on the circle $f,$ $g$ and $h$ so that $f^{2}+g^{2}+h^{2}=1$
and $gh=0$ and so that
\[
(z,w)\mapsto\left(f(w),g(w)+\tfrac{1}{2}\left\{ h(w),z\right\} \right),
\]
where $\{-,-\}$ denotes the anticommutator, is a degree-one mapping of the two torus in $\mathbb{C}^{2}$ to the
unit sphere in $\mathbb{R}\times\mathbb{C}$. The exact choice only
effects the relation of $\delta$ to $\epsilon$ in the theorems below.

Given $U_{1}$ and $U_{2}$ that form a representation of $\mathcal{T}_{\delta}$
we define
\begin{align*}
H_{1} & =f(U_{2}),\\
H_{2} & =g(U_{2})
         +\tfrac{1}{4}\left\{ h(U_{2}),U_{1}^{*}\right\}
         +\tfrac{1}{4}\left\{ h(U_{2}),U_{1}\right\}, \text{ and,} \\
H_{3} & =\tfrac{i}{4}\left\{ h(U_{2}),U_{1}^{*}\right\}
         -\tfrac{i}{4}\left\{ h(U_{2}),U_{1}\right\},
\end{align*}
which then is a representation for $S_{\eta}$ where $\eta$ can be
taken small when $\delta$ is small. The anticommutator
$\{\mbox{--} ,\mbox{--} \}$ is used to ensure that
$U_r = U_r^{\mathrm{T}}$ or $U_r = U_r^{\sharp}$ propogates
to the same symmetry in the $H_r$.  Now we define
\[
\mathrm{Bott}(U_{1},U_{2})=\mathrm{Bott}(H_{1},H_{2},H_{3}),
\]
and
\[
\mathrm{Pf}\!\mathrm{-}\!\mathrm{Bott}(U_{1},U_{2})
=
\mathrm{Pf}\!\mathrm{-}\!\mathrm{Bott}(H_{1},H_{2},H_{3}).
\]

It is possible to define these invariants when the $U_r$ are
only approximately unitary.  A simple approach
is to compute the invariant as defined above but using
$\mathrm{polar}(U_r)$ in place of $U_r$.  

It is not hard to see that when these indices are nontrivial the approximation
by commuting matrices of the required form is not possible.
See \cite{HastLorTheoryPractice}.  What is not so apparent is that these indices
can be nontrivial.  In the case of almost commuting matrices that are 
unitary, and with no other restrictions, we have the example first
considered by Voiculescu \cite{voiculescu}, with $A_n$ the cylic shift 
on the basis of $\mathbb{C} ^n$ and $B_n$ a diagonal unitary.  Specifically,
when
\[
A_n = \left[\begin{array}{ccccc}
0 &  &  &  & 1\\
1 & 0\\
 & 1 & \ddots\\
 &  & \ddots & 0\\
 &  &  & 1 & 0\end{array}\right] ,
\quad
B_n = \left[\begin{array}{ccccc}
e^{2\pi i/n}\\
 & e^{4\pi i/n}\\
 &  & \ddots\\
 &  &  & e^{-2\pi i/n}\\
 &  &  &  & 1\end{array}\right],
\]
we have 
\[
\mathrm{Bott}(A_n,B_n)
= 1,
\]
while $ \| [A_n, B_n] \| \rightarrow 0$.
For a self-dual example, we pair this example with its transpose.
Using Theorem~2.8 of \cite{HastLorTheoryPractice} and the fact
the transpose commutes with the functional calculus, one can show
\[
\mathrm{Pf}\!\mathrm{-}\!\mathrm{Bott}\left(\left[\begin{array}{cc}
A_{n}\\
 & A_{n}^{\mathrm{T}}\end{array}\right],\left[\begin{array}{cc}
B_{n}\\
 & B_{n}^{\mathrm{T}}\end{array}\right]\right)= -1,
\]
while the Bott index here is trivial.  This is an example of
self-dual, almost commuting unitaries
that are close to commuting unitaries, but far from
commuting self-dual unitaries.

Here then are our other two main theorems, along with the complex
version.
\begin{thm}
\label{thm:torusComplex}
{\rm (}\cite[Corollary 6.15]{ELP-pushBusby}{\rm )}
For every $\epsilon>0$ there exists a $\delta>0$ so that whenever
matrices $U_{1},U_{2}$ form a representation of $\mathcal{T}_{\delta}$,
and
\[
\mathrm{Bott}(U_{1},U_{2})=0,
\]
there are matrices $ $$V_{1},V_{2}$ that form a representation of
$\mathcal{T}_{0}$ and so that
\[
\left\Vert U_{r}-V_{r}\right\Vert \leq\epsilon\quad(r=1,2).
\]
\end{thm}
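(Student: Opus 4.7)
The plan is to reduce to Theorem~\ref{thm:SphereComplex} using the explicit torus-to-sphere construction already laid out in the text, and then promote the resulting commuting Hermitians to commuting unitaries via an extension/push-out lifting argument in the style of \cite{ELP-pushBusby}.

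First I would set $H_1 = f(U_2)$, $H_2 = g(U_2) + \tfrac{1}{4}\{h(U_2), U_1^* + U_1\}$ and $H_3 = \tfrac{i}{4}\{h(U_2), U_1^* - U_1\}$ and verify, by direct functional-calculus estimates using $f^2 + g^2 + h^2 = 1$, $gh = 0$, unitarity of $U_1, U_2$, and $\|[U_1, U_2]\| \le \delta$, that the triple satisfies the soft sphere relations $\mathcal{S}_\eta$ for some $\eta$ tending to $0$ with $\delta$. By the very definition of $\mathrm{Bott}(U_1, U_2)$ we then have $\mathrm{Bott}(H_1, H_2, H_3) = 0$, so Theorem~\ref{thm:SphereComplex} produces commuting Hermitians $K_1, K_2, K_3$ with $K_1^2 + K_2^2 + K_3^2 = I$ and $\|K_r - H_r\| \le \epsilon'$, where $\epsilon'$ can be made as small as we like by shrinking $\delta$.

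To convert the $K_r$ into commuting unitaries $V_1, V_2$ close to $U_1, U_2$, I would use the picture of the degree-one map $T^2 \to S^2$ as the quotient collapsing the $1$-skeleton $S^1 \vee S^1$, giving the short exact sequence
\[
0 \to C_0(\mathbb{R}^2) \to C(T^2) \to C(S^1 \vee S^1) \to 0,
\]
whose ideal is the augmentation ideal of $C(S^2)$. The commuting $K_r$ define an exact $*$-homomorphism $C(S^2) \to \mathbf{M}_N(\mathbb{C})$, and hence an exact representation of the ideal, while $z_r \mapsto U_r$ gives an approximate representation of the quotient $C(S^1 \vee S^1)$. The push-out/Busby machinery of \cite{ELP-pushBusby} glues these two pieces into an exact $*$-homomorphism $\pi \colon C(T^2) \to \mathbf{M}_N(\mathbb{C})$ within $\epsilon$ of the approximate representation $z_r \mapsto U_r$; the commuting unitaries $V_r = \pi(z_r)$ are then the desired approximation.

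The hard step is this gluing. The sphere data $K_r$ only control the bulk of $T^2$ coming from the attached $2$-cell, while $U_1, U_2$ carry the $1$-skeleton data, and the two must be reconciled into a single exact representation without violating the norm bound. This is where the hypothesis $\mathrm{Bott}(U_1, U_2) = 0$ enters: it forces the Busby invariant of the extension, as probed by the approximate data, to be trivial, so the extension splits up to a small perturbation. Ensuring that this perturbation can be made uniform in $\epsilon$ and independent of $N$ requires the semiprojectivity of $C(S^1 \vee S^1) = C(S^1) *_{\mathbb{C}} C(S^1)$, together with the quantitative form of the lifting results in \cite{ELP-pushBusby}, and is the main technical hurdle.
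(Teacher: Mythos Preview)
This theorem is quoted from \cite{ELP-pushBusby} and the paper does not reprove it; the paper's own method for the real and self-dual analogues (Theorems~\ref{thm:torusReal} and~\ref{thm:torusQuaternion}) would reprove the complex case as well, and that method is quite different from yours. The paper never uses the global degree-one map $T^2\to S^2$ to transfer the torus problem to the sphere. Instead it works locally: it pokes a small circle into each of many sub-disks of the $2$-cell (Corollary~\ref{cor:OneVolcationOnOmega}, Theorem~\ref{thm:extendAnyCW}), so that the resulting space retracts onto a one-dimensional mesh $\Gamma_m$, and then invokes the semiprojectivity of $C(\Gamma_m)$ to lift (Theorem~\ref{thm:C(X)isRealMSP}). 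The sphere appears only as the one-point compactification of a single small disk, and the $K$-theory hypothesis is spent locally, to permit each hole-poking.

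Your plan has a genuine gap at the third step. The assignment $(U_1,U_2)\mapsto(H_1,H_2,H_3)$ is induced by a surjection $T^2\to S^2$ that collapses the $1$-skeleton to a point; it discards exactly the information you need to recover $V_1,V_2$. Concretely, on the locus where $h(U_2)=0$ the matrices $H_1,H_2,H_3$ are functions of $U_2$ alone and carry no trace of $U_1$, so the commuting $K_r$ cannot by themselves pin down a $V_1$ close to $U_1$. Your proposed remedy---gluing the exact representation of the ideal $C_0(\mathbb{R}^2)$ coming from the $K_r$ with ``$z_r\mapsto U_r$ as an approximate representation of $C(S^1\vee S^1)$''---does not work: $C(S^1\vee S^1)$ is a \emph{quotient} of $C(T^2)$, so representations flow the other way, and in any case $U_1,U_2$ do not even approximately satisfy the defining relation $(u_1-1)(u_2-1)=0$ of $C(S^1\vee S^1)$. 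The push-out machinery of \cite{ELP-pushBusby} glues an ideal map with a quotient map that agree at the Busby level; it does not manufacture a quotient map from the raw data $U_1,U_2$. Finally, the claim that the Bott hypothesis ``forces the Busby invariant to be trivial so the extension splits'' is incorrect: the extension $0\to C_0(\mathbb{R}^2)\to C(T^2)\to C(S^1\vee S^1)\to 0$ is a fixed, non-split extension of $C^*$-algebras (there is no retraction $T^2\to S^1\vee S^1$, since the inclusion is not $\pi_1$-injective), independent of any matrix data, and the Bott hypothesis was already consumed in producing the $K_r$.
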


\begin{thm}
\label{thm:torusReal}
For every $\epsilon>0$ there exists a $\delta>0$
so that whenever $U_{1},U_{2}$ are complex symmetric matrices that
form a representation of of $\mathcal{T}_{\delta}$, there are complex
symmetric matrices $V_{1},V_{2}$ that form a representation of $\mathcal{T}_{0}$
and so that
\[
\left\Vert U_{r}-V_{r}\right\Vert \leq\epsilon\quad(r=1,2).
\]
\end{thm}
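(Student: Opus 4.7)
The plan is to reduce Theorem~\ref{thm:torusReal} to the sphere case Theorem~\ref{thm:SphereReal} via the degree-one construction introduced above, and then to pass back from the sphere to the torus using a push-out and extension argument that generalizes \cite{ELP-pushBusby} to the Real $C^{*}$-algebra setting.

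First, given complex symmetric $U_{1},U_{2}$ forming a representation of $\mathcal{T}_{\delta}$, I form the Hermitian triple $H_{1},H_{2},H_{3}$ by the anticommutator formula recalled above. Because the transpose commutes with the continuous functional calculus and preserves anticommutators, each $H_{r}$ is complex symmetric, and a routine norm estimate shows that $(H_{1},H_{2},H_{3})$ is a representation of $\mathcal{S}_{\eta}$ with $\eta\to 0$ as $\delta\to 0$. Theorem~\ref{thm:SphereReal} then produces commuting complex symmetric matrices $K_{1},K_{2},K_{3}$ satisfying $K_{1}^{2}+K_{2}^{2}+K_{3}^{2}=I$, with $\|K_{r}-H_{r}\|$ arbitrarily small.

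The principal obstacle is producing commuting complex symmetric unitaries $V_{1},V_{2}$ with $\|V_{r}-U_{r}\|\leq\epsilon$ from the commuting symmetric triple $(K_{1},K_{2},K_{3})$. I would frame this as a lifting problem along the inclusion $C(S^{2})\hookrightarrow C(T^{2})$ induced by the degree-one map $T^{2}\to S^{2}$, with both algebras equipped with the pointwise transpose as an involution of $C^{*,\tau}$-type. Following the template of \cite{ELP-pushBusby}, $C(T^{2})$ presents as a push-out of $C(S^{2})$ along a mapping-cone inclusion of $C(S^{1})$, so that the existence of an approximate symmetric lift across this push-out reduces to a semiprojectivity statement in the Real category; the symmetric disk case from \cite{LorSorensenDisk} together with Theorem~\ref{thm:SphereReal} supply the required semiprojective inputs.

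The hardest technical point is verifying that no Bott-type obstruction survives in the symmetric setting, matching the absence of any index hypothesis in the statement. Concretely, the natural obstruction class of the lifting problem lives in a $K$-group of a Real quotient algebra which, for the transpose involution, vanishes --- in contrast with the complex case where the obstruction lies in $\mathbb{Z}$ and the self-dual case where it lies in $\mathbb{Z}_{2}$. Once that vanishing is confirmed, the push-out construction delivers commuting symmetric unitaries $V_{1},V_{2}$ approximating $U_{1},U_{2}$ within the desired tolerance, and the dependence of $\delta$ on $\epsilon$ is tracked through the chain of reductions by the standard quantitative bookkeeping.
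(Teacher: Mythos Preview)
Your reduction from $\mathcal{T}_\delta$ to $\mathcal{S}_\eta$ via the degree-one map works as described, and Theorem~\ref{thm:SphereReal} does produce commuting symmetric $K_r$ close to the $H_r$. The genuine gap is the passage back. You propose to extend the $*$-homomorphism $C(S^2)\to\mathbf{M}_n(\mathbb{C})$ determined by the $K_r$ along $C(S^2)\hookrightarrow C(T^2)$ and recover commuting symmetric unitaries $V_r$ close to $U_r$. But the degree-one map $T^2\to S^2$ collapses the $1$-skeleton (a wedge of two circles) to a point, so this inclusion is far from surjective, and an extension---when one exists---carries no memory of the original $U_r$. Concretely, the joint spectrum of $(K_1,K_2,K_3)$ is a finite subset of $S^2$; over any point near the south pole there is a whole circle of preimages in $T^2$, and nothing in your outline selects one compatibly with $U_1,U_2$, so there is no mechanism forcing $\|V_r-U_r\|\leq\epsilon$. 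Your push-out description of $C(T^2)$ over $C(S^2)$ is also not made precise enough to identify the other leg or to say which semiprojectivity statement would apply to it.

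The paper's argument is organized differently and never reduces the torus to the sphere. It treats $T^2$ directly as a finite $2$-complex and applies Theorem~\ref{thm:C(X)isRealMSP}: the $*$-$\tau$-homomorphism $\varphi\colon C(T^2,\mathrm{id})\to\prod(\mathbf{M}_{d_n}(\mathbb{C}),\mathrm{T})\big/\bigoplus(\mathbf{M}_{d_n}(\mathbb{C}),\mathrm{T})$ determined by a sequence of almost commuting symmetric unitaries is first extended to $C((T^2)^{[m]})$ by poking many small holes in the $2$-cell (Theorem~\ref{thm:extendAnyCW}); this step uses the sphere-to-cylinder extension of Corollary~\ref{cor:OneVolcationOnOmega} \emph{locally} on each small subcell, not the global degree-one map. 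One then retracts $(T^2)^{[m]}$ onto the $1$-complex $\Gamma_m$ and lifts exactly using the Real semiprojectivity of $C(\Gamma_m,\mathrm{id})$ from \cite{LorSorensenDisk}. The automatic vanishing of the $K_{-2}$ obstruction in the transpose case---which you correctly anticipated---is precisely what makes the local hole-poking step go through without any index hypothesis.
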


\begin{thm}
\label{thm:torusQuaternion}
For every $\epsilon>0$ there exists a $\delta>0$ so that
whenever $U_{1},U_{2}$ are self-dual matrix
representations of $\mathcal{T}_{\delta}$, and
\[
\mathrm{Pf}\!\mathrm{-}\!\mathrm{Bott}(U_{1},U_{2})=1,
\]
there are self-dual matrices $V_{1},V_{2}$ that are representation
of $\mathcal{T}_{0}$ and so that
\[
\left\Vert U_{r}-V_{r}\right\Vert \leq\epsilon\quad(r=1,2).
\]
\end{thm}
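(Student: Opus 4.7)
\textbf{The plan} is to reduce Theorem~\ref{thm:torusQuaternion} to the sphere version (Theorem~\ref{thm:SphereQuaternion}) via the degree-one map from the torus to the sphere already introduced in \S\ref{sub:Main-theorems}, and then to lift commuting sphere data back to commuting torus data by an equivariant pushout argument generalizing \cite{ELP-pushBusby} to the category of $C^{*,\tau}$-algebras.

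Given self-dual unitaries $U_{1}, U_{2}$ representing $\mathcal{T}_{\delta}$, I would first form the Hermitian triple $H_{1}, H_{2}, H_{3}$ exactly as in the definition of $\mathrm{Pf}\!\mathrm{-}\!\mathrm{Bott}(U_{1},U_{2})$. A direct calculation using $f^{2}+g^{2}+h^{2}=1$, $gh=0$, and $\|[U_{1},U_{2}]\|\leq\delta$ shows that $H_{1},H_{2},H_{3}$ form a representation of $\mathcal{S}_{\eta}$ with $\eta=\eta(\delta)\to 0$. The anticommutators in the construction are placed precisely so that $U_{r}=U_{r}^{\sharp}$ propagates to $H_{r}=H_{r}^{\sharp}$. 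By definition, $\mathrm{Pf}\!\mathrm{-}\!\mathrm{Bott}(H_{1},H_{2},H_{3})=\mathrm{Pf}\!\mathrm{-}\!\mathrm{Bott}(U_{1},U_{2})=1$, so Theorem~\ref{thm:SphereQuaternion} produces commuting self-dual Hermitian matrices $K_{1},K_{2},K_{3}$ with $K_{1}^{2}+K_{2}^{2}+K_{3}^{2}=I$ and each $\|K_{r}-H_{r}\|$ arbitrarily small.

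The main step is then to build commuting self-dual unitaries $V_{1},V_{2}$ close to $U_{1},U_{2}$ whose image under the same degree-one map is close to $(K_{1},K_{2},K_{3})$. In the complex setting \cite{ELP-pushBusby} accomplishes this by placing $C(T^{2})$ in the extension $0\to C_{0}(\mathbb{R}^{2})\to C(T^{2})\to C(S^{1}\vee S^{1})\to 0$, comparing it with the analogous extension for $C(S^{2})$, and assembling a lift from weak semiprojectivity of the one-skeleton together with a pushout of the two extensions. I would equip each of $C(T^{2})$, $C(S^{2})$, and $C(S^{1}\vee S^{1})$ with the involution sending each unitary generator to its inverse, so that representations by self-dual unitaries correspond to $\tau$-equivariant $*$-homomorphisms, and then prove the equivariant analogues of the ELP pushout diagram and of its supporting semiprojectivity statements.

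The hard part will be this equivariant semiprojectivity. Concretely, one must (i) perturb a pair of almost self-dual, almost commuting unitaries on the one-skeleton to an exact commuting self-dual pair, and (ii) splice that perturbation with the commuting sphere triple $(K_{1},K_{2},K_{3})$ across the two-cell of the torus without breaking $\sharp$-symmetry. For (i), after replacing each $U_{r}$ by $\mathrm{polar}(U_{r})$, the problem is one-dimensional and reduces to the self-dual case of Lin's theorem available from \cite{LorSorensenDisk}. Step (ii) is where the pushout structure is used; because $\tau$ is defined compatibly on both arms of the pushout, the resulting lift is automatically self-dual, provided the Busby-type lifting statement of \cite{ELP-pushBusby} goes through in the $C^{*,\tau}$-setting. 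Verifying that last point — that the lifting can be arranged equivariantly — is the principal technical obstacle in adapting the complex argument to the quaternionic case.
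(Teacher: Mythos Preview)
Your overall framework --- recasting the problem as a lifting problem for $\left(C(T^{2}),\mathrm{id}\right)$ into products of $\left(\mathbf{M}_{2d_n}(\mathbb{C}),\sharp\right)$, and attacking it with a $C^{*,\tau}$-version of the ELP pushout/extension machinery together with equivariant semiprojectivity of the one-skeleton --- matches the paper's strategy. But the detour through Theorem~\ref{thm:SphereQuaternion} does not. The paper does not apply the sphere approximation theorem and then ``lift back'' to the torus; rather, both the sphere and torus results are parallel corollaries of the general lifting Theorem~\ref{thm:C(X)isRealMSP} for an arbitrary finite $2$-dimensional CW complex $X$. The $2$-cells are handled not by producing commuting sphere data $K_{1},K_{2},K_{3}$, but by the \emph{hole-poking} step (Corollary~\ref{cor:OneVolcationOnOmega}): each open $2$-cell is replaced by an annulus via the extension $C_{0}(\Omega)\hookrightarrow C_{0}(\Omega^{[1]})$, which in turn rests on the cylindrical-coordinate extension $C(S^{2})\hookrightarrow C(S^{1}\times[-1,1])$ of Theorem~\ref{thm:extendSphereSelfDual}. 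Iterating and retracting $X^{[m]}$ to the mesh $\Gamma_{m}$ reduces everything to the $1$-dimensional semiprojectivity established in \cite{LorSorensenDisk}.

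Your step ``build commuting self-dual unitaries $V_{1},V_{2}$ whose image under the degree-one map is close to $(K_{1},K_{2},K_{3})$'' is where the sketch breaks. The commuting matrices $K_{r}$ live in $\prod\mathbf{M}_{2d_n}(\mathbb{C})$, not in the quotient, so they do not furnish a map on $C_{0}(\mathbb{R}^{2})\subset C(T^{2})$ compatible with the pushout you want; and the two extensions $0\to C_{0}(\mathbb{R}^{2})\to C(S^{2})\to\mathbb{C}\to 0$ and $0\to C_{0}(\mathbb{R}^{2})\to C(T^{2})\to C(S^{1}\vee S^{1})\to 0$ have different quotients, so Theorem~\ref{thm:properGivespush-out} does not present $C(T^{2})$ as a pushout over $C(S^{2})$. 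In short, once you have $K_{1},K_{2},K_{3}$ there is no mechanism in your outline that actually uses them; the pushout argument, done correctly, produces the torus lift directly without passing through commuting sphere matrices. A smaller point: the $\tau$-structure you specify is incorrect. On $C(T^{2})$ and $C(S^{2})$ one must take $\tau=\mathrm{id}$ (so $u_{r}^{\tau}=u_{r}$, matching $U_{r}^{\sharp}=U_{r}$); sending generators to their inverses would instead force $U_{r}^{*}=U_{r}^{\sharp}$, i.e.\ $U_{r}\in\chi(\mathbf{M}_{N}(\mathbb{H}))$, which is a different symmetry class.
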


\section{Block symmetries in Unstable $K$-theory}

Recall $\mathrm{Bott}(H_{1},H_{2},H_{3})$ lives in a matrix algebra
twice as big as the $H_{r}$, and observe that it can be written as
\[
\mathrm{Bott}(H_{1},H_{2},H_{3})=\sum H_{r}\otimes\sigma_{r},
\]
where 
\[
	\sigma_{1}= \begin{bmatrix}0 & 1 \\ 1 & 0\end{bmatrix}, \quad
	\sigma_{2}= \begin{bmatrix}0 & i \\ -i & 0\end{bmatrix}, \quad \text{ and } \quad
	\sigma_{3}=\begin{bmatrix} 1 & 0 \\ 0 & -1\end{bmatrix}.
\]
The $\sigma_i$, which up to sign and scaling are
the Pauli spin matrices, are anti-self-dual. When the $H_{r}$ are
complex symmetric, $\mathrm{Bott}(H_{1},H_{2},H_{3})$ is anti-self-dual.
When the $H_{r}$ are self-dual, $\mathrm{Bott}(H_{1},H_{2},H_{3})$
can be conjugated to complex anti-symmetric.
We need to be working in the original formation and so deal with the operation $\sharp\otimes\sharp$.

Recall that there are, up to isomorphism, just two $\tau$-structures that can be put on $\mathbf{M}_{n}(\mathbb{C})$, the transpose or, when $n$ is even, the dual, see \cite[\S 10.1]{li2003real}.  
We need a result about how symmetries on a matrix in $\mathbf{M}_{n}(\mathbb{C})\otimes\mathbf{M}_{2}(\mathbb{C})$ can force symmetries on one of its blocks.
This is Lemma~\ref{lem:SymmetryInPolars}.

\begin{lem}
\label{lem:bothInvertableIsDense} Suppose $\left(\mathbf{M}_{n}(\mathbb{C}),\tau\right)$
is a $C^{*,\tau}$-algebra, and consider the larger $C^{*,\tau}$-algebra
\[
\left(\mathbf{M}_{2n}(\mathbb{C}),\tau\otimes\sharp\right),
\]
meaning that on $\mathbf{M}_{2n}(\mathbb{C})$ we are using the
$\tau$-operation
\[
\left[\begin{array}{cc}
A & B\\
C & D\end{array}\right]^{\tau\otimes\sharp}=\left[\begin{array}{cc}
D^{\tau} & -B^{\tau}\\
-C^{\tau} & A^{\tau}\end{array}\right].
\]
Within the group
\[
\left\{
W\in\mathbf{M}_{2n}(\mathbb{C})
\left|\strut \ 
\det(W)=1,\ W^{*}=W^{-1}=W^{\tau\otimes\sharp}
\right.
\right\},
\]
the set of
\[
W=\left[\begin{array}{cc}
A & B\\
-B^{*\tau} & A^{\tau*}\end{array}\right],
\]
for which both $A$ and $B$ are invertible, is a dense open subset.
\end{lem}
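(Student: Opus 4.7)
My plan is to verify openness immediately from continuity, then prove density by exhibiting an explicit good element of the group and invoking the fact that the zero set of a non-identically-zero real-analytic function on a connected real-analytic manifold has empty interior.

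The set in question is open because $W\mapsto\det A(W)$ and $W\mapsto\det B(W)$ are continuous and invertibility is an open condition.

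For density, I would exhibit
\[
W_{0}=\frac{1}{\sqrt{2}}\begin{bmatrix}I_{n} & I_{n}\\ -I_{n} & I_{n}\end{bmatrix},
\]
and check directly that $W_{0}$ lies in the group: it is unitary (the rows are orthonormal), it satisfies $W_{0}^{\tau\otimes\sharp}=W_{0}^{*}$ (using $I^{\tau}=I$ in the block formula for $\tau\otimes\sharp$), and $\det W_{0}=1$ (via the block formula for commuting blocks, since $A_{0}D_{0}-B_{0}C_{0}=I$). Both blocks $I_{n}/\sqrt{2}$ are manifestly invertible. Moreover, $W_{0}=\exp\!\bigl(\tfrac{\pi}{4}X_{0}\bigr)$ for $X_{0}=\bigl[\begin{smallmatrix}0 & I\\ -I & 0\end{smallmatrix}\bigr]$, which lies in the Lie algebra of $G$ (it is skew-Hermitian, traceless, and satisfies $X_{0}^{\tau\otimes\sharp}=-X_{0}$), so $W_{0}$ belongs to the identity component $G_{0}$. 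The maps $\det A$ and $\det B$ restrict to real-analytic (in fact polynomial) functions on the smooth manifold $G$ that do not vanish at $W_{0}$, so neither vanishes identically on the connected real-analytic manifold $G_{0}$. The standard real-analytic principle then shows the bad set $\{\det A\cdot\det B=0\}$ has empty interior in $G_{0}$.

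The main obstacle is extending density from $G_{0}$ to the full group $G$, should $G$ fail to be connected. I would address this by arguing that $G$ is in fact connected: the conjugate-linear involution $\rho(W)=(W^{\tau\otimes\sharp})^{*}$ is a real structure on $\mathbf{M}_{2n}(\mathbb{C})$, and its fixed-point real subalgebra is a classical real matrix algebra (for $\tau=T$ one computes it to be $\mathbf{M}_{n}(\mathbb{H})$), whose determinant-one unitaries form a connected compact classical Lie group. If one prefers to avoid this structural appeal, a per-component argument is available: for any component $C$ of $G$ and any $W\in C$, the curve $\theta\mapsto W\cdot\exp(\theta X_{0})$ stays in $C$ (because $\exp(\theta X_{0})\in G_{0}$), and provided the matrix pencil $(A(W),B(W))$ is regular, both blocks of the perturbed matrix are invertible at all but finitely many $\theta$ in a small interval around $0$; the exceptional singular-pencil case can itself be removed by a preliminary small perturbation $W\mapsto W\exp(\epsilon Y)$ for a suitable $Y\in\mathfrak{g}$ transverse to the singular-pencil locus, which stays within $C$.
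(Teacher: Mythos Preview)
Your proof is correct and follows essentially the same approach as the paper: both use the explicit element $W_{0}=\tfrac{1}{\sqrt{2}}\bigl[\begin{smallmatrix}I & I\\ -I & I\end{smallmatrix}\bigr]$ with invertible blocks, both exploit analyticity of $\det A$ and $\det B$, and both rely on connectedness of the group $G$. The only difference is in packaging: the paper uses the surjectivity of the exponential map on the compact group (case-by-case: $Sp(n)$ for $\tau=\mathrm{T}$, $SO(4N)$ for $\tau=\sharp$ via the isomorphism $\Phi$) to produce a one-parameter analytic path $W_{t}=e^{tH}W_{0}$ from $W_{0}$ to an arbitrary $W_{1}$, and then invokes the elementary fact that a nonzero analytic function of one real variable has isolated zeros; you instead invoke the global principle that a nonzero real-analytic function on a connected real-analytic manifold has nowhere-dense zero set.

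Two minor remarks. First, your structural connectedness argument is only spelled out for $\tau=\mathrm{T}$; for $\tau=\sharp$ the group is (after conjugation by a fixed unitary, as in the paper's map $\Phi$) exactly $SO(4N)$, which is connected, so the same reasoning goes through. Second, your fallback per-component argument is unnecessary once connectedness is established, and the handling of the singular-pencil case there is underspecified; you may simply drop that paragraph.
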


\begin{proof}
Up to isomorphism of $C^{*,\tau}$-algebras, there are two cases,
$\tau=\mathrm{T}$ and $\tau=\sharp$. In both cases, the openness
is clear.

Suppose $\tau=\mathrm{T}$, which means $\tau\otimes\sharp=\sharp$.
First we not that the invertiblity of $A$ and $B$ holds in one case,
specifically
\begin{equation}
W_{0}=\frac{1}{\sqrt{2}}\left[\begin{array}{cc}
I & I\\
-I & I\end{array}\right],
\label{eq:ExampleOfW}
\end{equation}
which is a symplectic unitary, i.e.\  $W^{\sharp}=W^{*}=W^{-1}$.

All symplectic matrices have determinant $1$, so we can ignore the determinant
condition. We know from Lie theory that every symplectic unitary is
$e^{H}$ for a matrix $H$ with $H^{*}=H^{\sharp}=-H$. Given any
symplectic unitary
\[
W_{1}=\left[\begin{array}{cc}
A & B\\
-\overline{B} & \overline{A}\end{array}\right],
\]
we can find a matrix $H$ with $H^* = H^\sharp = -H$ such that $W_{1}=e^{H}W_{0}$.
Thus we have an analytic path $W_{t}=e^{tH}W_{0}$ from $W_{0}$ to $W_{1}$, with $W_{t}$
a symplectic unitary at every $t$. Let the upper blocks of
$W_{t}$ be $A_{t}$ and $B_{t}$. Considering the power series
for $e^{tH}$ we find convergent power series for the scalar
paths $\det(A_{t})$ and $\det(B_{t})$. As these analytic
paths are nonzero around $t=1$, neither can vanish on any open
interval. We can choose $t$ close to $0$ to find $W_{t}$ with $A_{t}$
and $B_{t}$ both invertible.

We prove the $\tau=\mathrm{\sharp}$ case similarly, starting with
the same example, $W_{1}$ as in (\ref{eq:ExampleOfW}), but now with
$n=2N$. A real orthogonal matrix of determinant one is $e^{H}$ for
a matrix with $H^{*}=H^{\mathrm{T}}=-H$. Translating this fact via
the isomorphism $\Phi$ from $\left(\mathbf{M}_{4N}(\mathbb{C}),\sharp\otimes\sharp\right)$ to $\left(\mathbf{M}_{4N}(\mathbb{C}),\mathrm{T}\right)$, which is just conjugation by a unitary (see \cite[Lemma 1.3]{HastLorTheoryPractice} or equivalently (\ref{eq:Phi-defined})), we see that when $W$ is a unitary with determinant one and $W^{*}=W^{\sharp\otimes\sharp}$, there is a matrix $H$ with $H^{*}=H^{\sharp\otimes\sharp}=-H$
so that $e^{H}=W$. 
Therefore if we are given $W_{1}$ a unitary with $W_{1}^{*}=W_{1}^{\sharp\otimes\sharp}$ we can find $H$ with $H^* = H^{\sharp \otimes \sharp} = -H$ such that $W_{1}=e^{H}W_{0}$, and so we have an analytic path $W_{t}=e^{tH}W_{0}$
of unitaries, and one can check directly, or using $\Phi$, that
$W_{t}^{*}=W_{t}^{\sharp\otimes\sharp}$.
The rest of the argument is exactly as in the previous case.
\end{proof}

\begin{lem}
\label{lem:ProductOfPolars}
If $a$ and $b$ are invertible elements of a unital
$C^{*}$-algebra and $aa^{*}+bb^{*}=1$ then
\[
\mathrm{polar}(a^{*}b)=\left(\mathrm{polar}(a)\right)^{*}\mathrm{polar}(b).
\]
\end{lem}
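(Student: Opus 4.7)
The plan is to reduce everything to the polar decompositions of $a$ and $b$, extract from the constraint $aa^* + bb^* = 1$ a single intertwining identity, and then verify the claim by a direct computation.

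First, since $a$ and $b$ are invertible, write polar decompositions $a = u|a|$ and $b = v|b|$, where $u = \mathrm{polar}(a)$ and $v = \mathrm{polar}(b)$ are unitaries and $|a|, |b|$ are positive and invertible. The constraint $aa^* = 1 - bb^*$ combined with $aa^* \geq c \cdot 1$ for some $c > 0$ (available because $a$ is invertible) forces $\|b\| < 1$, so $1 - |b|^2 = 1 - b^*b$ is positive and invertible. Substituting the polar forms into $aa^* + bb^* = 1$ gives $u|a|^2 u^* + v|b|^2 v^* = 1$, whence
\[
|a|^2 \;=\; u^* v \,(1 - |b|^2)\, v^* u.
\]
Both sides are positive and invertible, so uniqueness of positive square roots yields $|a| = u^* v \,(1-|b|^2)^{1/2}\, v^* u$. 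Multiplying on the right by $u^* v$ produces the key intertwining identity
\[
|a|\, u^* v \;=\; u^* v\, (1-|b|^2)^{1/2}. \qquad (\ast)
\]

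Next, I would compute the polar part of $a^*b$ directly. Using $aa^* = 1 - bb^*$,
\[
(a^*b)^*(a^*b) \;=\; b^* a a^* b \;=\; b^*(1-bb^*)b \;=\; |b|^2(1-|b|^2),
\]
so $|a^*b| = |b|(1-|b|^2)^{1/2}$, which is invertible. Hence
\[
\mathrm{polar}(a^*b) \;=\; a^*b\,|a^*b|^{-1} \;=\; |a|\,u^*v\,|b| \cdot (1-|b|^2)^{-1/2}|b|^{-1} \;=\; |a|\, u^*v\,(1-|b|^2)^{-1/2},
\]
where $|b|$ commutes past $(1-|b|^2)^{-1/2}$ because both are functions of the same positive operator. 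Applying $(\ast)$ collapses this to $u^*v$, which is exactly $(\mathrm{polar}(a))^*\mathrm{polar}(b)$.

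There is no deep obstacle here, but the conceptual point worth flagging is the role of identity $(\ast)$: the hypothesis $aa^* + bb^* = 1$ is precisely what causes $|a|^2$ and $1-|b|^2$ to be unitarily intertwined by $u^*v$, and that intertwining is exactly what is needed to cancel the factor $|a|$ against the factor $(1-|b|^2)^{-1/2}$ appearing in the polar decomposition of $a^*b$. The only mild bookkeeping concern is verifying that $1-|b|^2$ is invertible, which follows either from the norm estimate above or from the invertibility of $|a^*b|^2 = |b|^2(1-|b|^2)$ together with the invertibility of $|b|$.
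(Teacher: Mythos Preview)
Your proof is correct and follows essentially the same approach as the paper: both compute $\mathrm{polar}(a^*b)$ directly from the formula $x(x^*x)^{-1/2}$, use $aa^* = 1 - bb^*$, and exploit the functional-calculus intertwining $xf(x^*x)=f(xx^*)x$. The paper compresses everything into a three-line chain of equalities without naming the unitaries or isolating your identity $(\ast)$, but the content is the same.
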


\begin{proof}
We use the formula $\mathrm{polar}(x)=x\left(x^{*}x\right)^{-\frac{1}{2}}$
and compute
\begin{align*}
a^{*}b\left(b^{*}aa^{*}b\right)^{-\frac{1}{2}}
 & =a^{*}b\left(b^{*}\left(1-bb^{*}\right)b\right)^{-\frac{1}{2}}\\
 & =a^{*}\left(1-bb^{*}\right)^{-\frac{1}{2}}\left(bb^{*}\right)^{-\frac{1}{2}}b\\
 & =\left(a^{*}a\right)^{-\frac{1}{2}}a^{*}b\left(b^{*}b\right)^{-\frac{1}{2}}. \qedhere
\end{align*}

\end{proof}

\begin{lem}
\label{lem:SymmetryInPolars}
If
\[
W=\left[\begin{array}{cc}
A & B\\
-B^{*\tau} & A^{\tau*}\end{array}\right],
\]
is a unitary in $\mathbf{M}_{2n}(\mathbb{C})$ with $W^{*}=W^{\tau\otimes\sharp}$ and both $A$ and $B$ are invertible, then $\left(\mathrm{polar}(A)\right)^{*}\mathrm{polar}(B)$ is fixed by $\tau$.
\end{lem}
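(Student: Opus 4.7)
The plan is to extract the needed structural identities from the unitarity of $W$ and combine them with Lemma~\ref{lem:ProductOfPolars}. The $(1,1)$ block of $WW^{*} = I$ reads $AA^{*} + BB^{*} = I$, which is exactly the hypothesis of Lemma~\ref{lem:ProductOfPolars} with $a = A$ and $b = B$. That lemma identifies $\mathrm{polar}(A)^{*}\mathrm{polar}(B)$ with $\mathrm{polar}(A^{*}B)$, so it suffices to prove that $\mathrm{polar}(A^{*}B)$ is fixed by $\tau$.

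To get the $\tau$-symmetry of $A^{*}B$ itself, I would read off the $(1,2)$ block of $W^{*}W = I$. Since
\[
W^{*} = \begin{bmatrix} A^{*} & -B^{\tau} \\ B^{*} & A^{\tau}\end{bmatrix},
\]
this entry is $A^{*}B - B^{\tau}A^{\tau *}$, and unitarity yields $A^{*}B = B^{\tau}A^{\tau *}$. Because $\tau$ is an involutive antihomomorphism that commutes with $*$, we also have $(A^{*}B)^{\tau} = B^{\tau}(A^{*})^{\tau} = B^{\tau}A^{\tau *}$. Combining these two equations shows that $A^{*}B$ is $\tau$-fixed.

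The last step is to show that $\tau$ commutes with the polar part on invertible elements. For any invertible $x$ one has the two equivalent formulas $\mathrm{polar}(x) = x(x^{*}x)^{-1/2} = (xx^{*})^{-1/2}x$; applying $\tau$ to the first, and using that $\tau$ sends positive elements to positive elements and intertwines the continuous functional calculus, yields
\[
\mathrm{polar}(x)^{\tau} = \bigl((x^{*}x)^{\tau}\bigr)^{-1/2} x^{\tau} = (x^{\tau}x^{\tau *})^{-1/2} x^{\tau} = \mathrm{polar}(x^{\tau}).
\]
Applied to the invertible, $\tau$-fixed element $x = A^{*}B$, this gives $\mathrm{polar}(A^{*}B)^{\tau} = \mathrm{polar}(A^{*}B)$, finishing the proof. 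The only point where I would slow down in the write-up is this last equivariance: since $A^{*}B$ is not assumed to be normal, one cannot simply push $\tau$ through $(x^{*}x)^{-1/2}$ as a function of $x^{*}x$ alone, and the identification must go through the equivalence of the two formulas for the polar part. Everything else is routine block arithmetic.
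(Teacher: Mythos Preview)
Your proof is correct and follows essentially the same route as the paper: extract $AA^{*}+BB^{*}=I$ and $A^{*}B=B^{\tau}A^{\tau*}$ from unitarity, apply Lemma~\ref{lem:ProductOfPolars}, and use $\mathrm{polar}(x)^{\tau}=\mathrm{polar}(x^{\tau})$. Your explicit justification of this last equivariance via the two formulas for the polar part is a nice touch that the paper leaves implicit.
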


\begin{proof}
From the fact that $W$ is unitary we deduce $AA^{*}+BB^{*}=I$ and
$A^{*}B=B^{\tau}A^{\tau*}$, so $(A^*B)^\tau = A^*B$. 
The last lemma tell us
\[
\left(\mathrm{polar}(A)\right)^{*}\mathrm{polar}(B)=\mathrm{polar}(A^{*}B).
\]
From the definition of the polar part in terms of functional calculus we find
$\left(\mathrm{polar}(x)\right)^{\tau}=\mathrm{polar}(x^{\tau})$ and therefore
\[
\left(\mathrm{polar}(A^{*}B)\right)^{\tau} = \mathrm{polar}((A^*B)^\tau) = \mathrm{polar}(A^{*}B). \qedhere
\]
\end{proof}

\section{Structure Diagonalization and $KO_{2}$}

\subsection{Hermitian anti-self-dual invertibles}

The $K$-theory we will need to track turns out to be $K_{2}$.
In the case of symmetric matrices, we will need $K_{2}$ of $\mathbf{M}_{n}(\mathbb{C})\otimes\mathbf{M}_{2}(\mathbb{C}) \cong \mathbf{M}_{2n}(\mathbb{C})$.
The fact that $K_{2}(\mathbf{M}_{2N}(\mathbb{C}),\sharp)=0$ can be
given the concrete realization that for any two Hermitian anti-self-dual
matrices that are approximately unitary, there is a symplectic unitary
that approximately conjugates one to the other.

\begin{lem}
\label{lem:DiagAntiDual-Hermitian}
Suppose $X$ in $\mathbf{M}_{2N}(\mathbb{C})$ is Hermitian and
anti-self-dual. Then there exists a symplectic unitary $W$ and
a diagonal matrix $D$ with nonnegative real diagonal entries so that
\[
X=W\left[\begin{array}{cc}
D & 0\\
0 & -D\end{array}\right]W^{*}.
\]
\end{lem}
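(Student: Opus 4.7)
The plan is to diagonalize $X$ via a Kramers-type pairing induced by the antiunitary $\mathcal{T}\xi = -Z\overline{\xi}$ already introduced in the paper. The first step is to re-express the hypothesis in terms of $\mathcal{T}$: combining $X^\sharp = -ZX^T Z = -X$ with $X^* = X$ (so that $X^T = \overline{X}$) gives $\overline{X} = ZXZ$, which translates directly to $\mathcal{T}X = -X\mathcal{T}$. Since $Z^2 = -I$ and $Z$ is real, one also has $\mathcal{T}^2 = -I$. Thus we are in the standard setting of a Hermitian operator anticommuting with a quaternionic time reversal.

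Next I would exploit the anticommutation to pair eigenvalues. If $Xv = \lambda v$ then $X(\mathcal{T}v) = -\lambda\,\mathcal{T}v$, and because $\mathcal{T}$ is antiunitary it sends the $\lambda$-eigenspace isometrically onto the $(-\lambda)$-eigenspace. For each $\lambda > 0$ I pick an orthonormal basis of that eigenspace and label the collection $v_1,\ldots,v_m$ with $Xv_i = \lambda_i v_i$; then $\mathcal{T}v_1,\ldots,\mathcal{T}v_m$ is automatically an orthonormal basis of the sum of the negative-eigenvalue subspaces, since eigenspaces for distinct eigenvalues are orthogonal. The main obstacle is the zero eigenspace: it is $\mathcal{T}$-invariant, so $\mathcal{T}v$ sits in the same eigenspace as $v$ and a basis must be chosen symplectically. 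Here I rely on the identity
\[
\langle v, \mathcal{T}v\rangle \;=\; \overline{\langle \mathcal{T}v, \mathcal{T}^2 v\rangle} \;=\; \overline{\langle \mathcal{T}v, -v\rangle} \;=\; -\langle v, \mathcal{T}v\rangle,
\]
valid for every $v$, which shows $v \perp \mathcal{T}v$. Starting from any unit $v_{m+1} \in \ker X$ and recursing inside the subspace $(\mathrm{span}(v_{m+1}, \mathcal{T}v_{m+1}))^\perp \cap \ker X$ (which is still $\mathcal{T}$-invariant, since $\mathcal{T}$ preserves orthogonality), one builds $v_{m+1},\ldots,v_N \in \ker X$ so that $v_1,\ldots,v_N,\mathcal{T}v_1,\ldots,\mathcal{T}v_N$ is an orthonormal basis of $\mathbb{C}^{2N}$; in particular $\ker X$ has even complex dimension.

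Finally I assemble $W = [\,v_1 \mid \cdots \mid v_N \mid \mathcal{T}v_1 \mid \cdots \mid \mathcal{T}v_N\,]$. Orthonormality of the columns makes $W$ unitary, and by construction
\[
W^* X W = \begin{bmatrix} D & 0 \\ 0 & -D \end{bmatrix}, \qquad D = \mathrm{diag}(\lambda_1,\ldots,\lambda_N) \geq 0
\]
(setting $\lambda_i = 0$ for $i > m$). To check that $W$ is symplectic, write $v_i = \begin{bmatrix} a_i \\ b_i \end{bmatrix}$ with $a_i, b_i \in \mathbb{C}^N$; a direct computation gives $\mathcal{T}v_i = \begin{bmatrix} -\overline{b_i} \\ \overline{a_i} \end{bmatrix}$, and collecting columns into blocks yields $W = \begin{bmatrix} A & -\overline{B} \\ B & \overline{A} \end{bmatrix}$, which is exactly the form of an element of $\chi(\mathbf{M}_N(\mathbb{H}))$. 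Hence $W^\sharp = W^*$, and combined with $W^{-1} = W^*$ this gives $W^\sharp = W^{-1}$, so $W$ is a symplectic unitary.
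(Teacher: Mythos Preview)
Your argument is correct. The translation of the hypotheses into $\mathcal{T}X=-X\mathcal{T}$ is accurate, the Kramers-pairing of eigenspaces and the treatment of $\ker X$ are handled properly, and the block form you obtain for $W$ is indeed $\chi(A-\overline{B}\hat{j})$, hence $W^{\sharp}=W^{*}=W^{-1}$.

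The paper proceeds differently: it sets $Y=-iX$, observes that $Y^{\sharp}=Y^{*}$ so that $Y\in\chi(\mathbf{M}_{N}(\mathbb{H}))$, and then invokes the spectral theorem for normal quaternionic matrices (citing \cite{FarenickSpectralThmquatern} and \cite{Lor-Quaternions}) as a black box to produce the symplectic unitary. A final sign adjustment, conjugating by $\left[\begin{smallmatrix}0&i\\ i&0\end{smallmatrix}\right]$ in the relevant $2\times2$ blocks, forces the diagonal of $D$ to be nonnegative. Your route is more self-contained: you effectively reprove, from scratch and via the antiunitary $\mathcal{T}$, exactly the instance of the quaternionic spectral theorem that is needed, and your choice of the $v_i$ from nonnegative eigenspaces makes the sign step unnecessary. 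The cost is a longer argument; the gain is that no external structure theorem is required.
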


\begin{proof}
The matrix $Y=-iX$ has $Y^{\sharp}=Y^{*}$ so we may apply results about matrices of quaternions. 
Since $Y$ is normal, indeed skew-Hermitian, the spectral theorem for matrices of quaternions \cite[Theorem 3.3]{FarenickSpectralThmquatern} gives us a symplectic unitary $W$ so that
\[
X=W\left[\begin{array}{cc}
D & 0\\
0 & -D\end{array}\right]W^{*}.
\]
This follows more easily form a different version of the quaternionic spectral theorem given in \cite[Theorem 2.4(2)]{Lor-Quaternions}, see also \cite[Page 87]{FarenickSpectralThmquatern}. 
We need only adjust $W$ and $D$ so that the diagonal of $D$ ends
up non-negative. We can swap the $j$-th diagonal element of $D$
with the $j$-th diagonal element of $-D$ as needed by utilizing
the formula
\[
\left[\begin{array}{cc}
0 & i\\
i & 0\end{array}\right]\left[\begin{array}{cc}
\lambda & 0\\
0 & -\lambda\end{array}\right]\left[\begin{array}{cc}
0 & i\\
i & 0\end{array}\right]^{*}=\left[\begin{array}{cc}
-\lambda & 0\\
0 & \lambda\end{array}\right]. \qedhere
\]
\end{proof}

\begin{thm}
\label{thm:K2_of_Quaternions}
Suppose $\left\Vert S^{2}-I\right\Vert <1$ and $S^{*}=S$ and
$S^{\sharp}=-S$. Then there is a symplectic unitary $W$ so that
\[
\left\Vert S-W\left[\begin{array}{cc}
I & 0\\
0 & -I\end{array}\right]W^{*}\right\Vert \leq\left\Vert S^{2}-I\right\Vert .
\]
\end{thm}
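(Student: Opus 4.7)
The plan is to reduce the problem to a pointwise scalar inequality by applying the structure-diagonalization result of Lemma~\ref{lem:DiagAntiDual-Hermitian} to $S$. Since $S$ is Hermitian and anti-self-dual, the lemma produces a symplectic unitary $W$ and a diagonal matrix $D = \mathrm{diag}(d_1, \dots, d_N)$ with $d_j \geq 0$ such that
\[
S = W\begin{bmatrix} D & 0 \\ 0 & -D \end{bmatrix}W^{*}.
\]
This very same $W$ will serve as the symplectic unitary in the conclusion.

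Next I would compute both sides of the desired inequality using unitary invariance of the operator norm. On one hand,
\[
\left\Vert S - W\begin{bmatrix} I & 0 \\ 0 & -I \end{bmatrix}W^{*}\right\Vert
= \left\Vert \begin{bmatrix} D-I & 0 \\ 0 & I-D \end{bmatrix}\right\Vert
= \max_{j}\, |d_{j}-1|,
\]
and on the other hand
\[
\Vert S^{2} - I \Vert = \left\Vert \begin{bmatrix} D^{2}-I & 0 \\ 0 & D^{2}-I \end{bmatrix}\right\Vert = \max_{j}\, |d_{j}^{2}-1|.
\]

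Finally, I would invoke the elementary factorization $d_{j}^{2}-1 = (d_{j}-1)(d_{j}+1)$. Because each $d_{j} \geq 0$ (this is the key payoff from the non-negativity built into Lemma~\ref{lem:DiagAntiDual-Hermitian}) we have $d_{j}+1 \geq 1$, so
\[
|d_{j}-1| \leq |d_{j}-1|\,(d_{j}+1) = |d_{j}^{2}-1|
\]
for every $j$. Taking the maximum over $j$ gives exactly the inequality in the theorem. The hypothesis $\Vert S^{2}-I\Vert < 1$ is only needed to make the conclusion nontrivial (ensuring $d_{j} > 0$ and keeping the distance below $1$); the essential work is entirely absorbed by Lemma~\ref{lem:DiagAntiDual-Hermitian}, so there is no real obstacle beyond citing it.
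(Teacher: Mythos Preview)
Your proof is correct and follows essentially the same approach as the paper: both apply Lemma~\ref{lem:DiagAntiDual-Hermitian} to diagonalize $S$, then reduce to the scalar inequality $|d_j - 1| \le |d_j^2 - 1|$ for $d_j \ge 0$. The paper phrases the scalar step as a set containment $\{\lambda \ge 0 : |\lambda^2 - 1| \le \delta\} \subseteq \{\lambda : |\lambda - 1| \le \delta\}$ rather than your factorization, but the content is identical.
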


\begin{proof}
Lemma~\ref{lem:DiagAntiDual-Hermitian} tells us
\[
S=W^{*}\left[\begin{array}{cc}
D & 0\\
0 & -D\end{array}\right]W,
\]
for some symplectic unitary $W$ and some diagonal matrix $D$ with nonnegative real entries. 
Let $\delta=\left\Vert S^{2}-I\right\Vert$.
We compute
\[
\left\Vert D^{2}-I\right\Vert =\left\Vert \left[\begin{array}{cc}
D & 0\\
0 & -D\end{array}\right]^{2}-\left[\begin{array}{cc}
I & 0\\
0 & I\end{array}\right]\right\Vert =\left\Vert S^{2}-I\right\Vert =\delta,
\]
so the diagonal elements of $D$ are in 
\[
\left\{
\lambda\left|\strut1-\delta\leq\lambda^{2}\leq1+\delta\right.
\right\}
\subseteq
\left\{
\lambda\left|\strut1-\delta\leq\lambda\leq1+\delta\right.
\right\}.
\]
The result now follows.
\end{proof}

\subsection{Coupling two dual operations}

In the self-dual case, the $K$-theory is calculated in
$\mathbf{M}_{2N}(\mathbb{C})\otimes\mathbf{M}_{2}(\mathbb{C})\cong\mathbf{M}_{4N}(\mathbb{C})$
and the operation that specifies the symmetry on the Bott matrix is
$\sharp\otimes\sharp$. In terms of $4n$-by-$4n$ matrices, $\sharp\otimes\sharp$
is $\mathrm{T}$ conjugated by a unitary. There are many identifications
of $\mathbf{M}_{2N}(\mathbb{C})\otimes\mathbf{M}_{2}(\mathbb{C})$
with $\mathbf{M}_{4N}(\mathbb{C})$ and the one we use operates via
\[
B\otimes\begin{bmatrix}0 & 1\\
0 & 0\end{bmatrix}\mapsto\begin{bmatrix}0 & B\\
0 & 0\end{bmatrix}.
\]
Of course $B$ itself will often be written out in terms of four $N$-by-$N$
blocks. We are using
\[
Z=Z_{N}=\begin{bmatrix}0 & I\\
-I & 0\end{bmatrix}\in\mathbf{M}_{2N}(\mathbb{C}),
\]
and the operation $\sharp\otimes\sharp$ works out as
\[
\left[\begin{array}{cc}
A & B\\
C & D\end{array}\right]^{\sharp\otimes\sharp}=\left[\begin{array}{cc}
D^{\sharp} & -B^{\sharp}\\
-C^{\sharp} & A^{\sharp}\end{array}\right].
\]
The symmetry $X^{\sharp\otimes\sharp}=X^{*}$ means
\begin{equation}
X=\left[\begin{array}{cccc}
A_{11} & A_{12} & B_{11} & B_{12}\\
A_{21} & A_{22} & B_{21} & B_{22}\\
-\overline{B_{22}} & \overline{B_{21}} & \overline{A_{22}} & -\overline{A_{21}}\\
\overline{B_{21}} & -\overline{B_{11}} & -\overline{A_{21}} & \overline{A_{11}}
\end{array}\right].
\label{eq:Twisted Real}
\end{equation}

The isomorphism we need, cf. \cite[Lemma 1.3]{HastLorTheoryPractice}, is
\begin{equation}
\Phi \colon \left(\mathbf{M}_{4N}(\mathbb{C}),\sharp\otimes\sharp\right)
\rightarrow
\left(\mathbf{M}_{4N}(\mathbb{C}),\mathrm{T}\right)
\label{eq:Phi-defined}
\end{equation}
defined by
\[
\Phi\left(X\right)=UXU^{*},
\]
where $X$ is $4N$-by-$4N$ and
\[
U=\frac{1}{\sqrt{2}}\left(I\otimes I-iZ_{N}\otimes Z_{N}\right).
\]
Any $X$ in the form (\ref{eq:Twisted Real}) is conjugate to a matrix
with real entries, and so $\det(X)\in\mathbb{R}.$ This is the trick
that allows us to get our $\mathbb{Z}_{2}$-invariant as the sign
of a determinant of some invertible $X$ that is full of complex numbers.

\subsection{Hermitian anti-symmetric invertibles}

Because of the coupling of the dual operations, and the doubling of the matrix sizes in the proofs of Theorems \ref{thm:extendSphereReal} and  \ref{thm:extendSphereSelfDual}, we will be interested in the $K_{2}$-group of $\left(\mathbf{M}_{4n}(\mathbb{C}),\mathrm{T}\right)$. 
The groups is $\mathbb{Z}_{2}$, and this fact can be given the following concrete realization.
The hermitian anti-symmetric matrices fall into two classes:
those that can be approximately conjugated by an element of
$SL_{4n}(\mathbb{R})$ to
\[
S_{0}=\left[\begin{array}{cccccccccc}
 0 & (-1)^n i &&&&&& \\
(-1)^n (-i) & 0 &&&&&&& \\
 & & 0  & i &&&&& \\
 & & -i  & 0 &&&&& \\
 & & &  & 0  & i &&&\\
 & & &  & -i & 0 &&& \\
 & & &  &    &  & \ddots && \\
 & & &  &    &  & & 0 & i \\
 & & &  &    & &  & -i & 0\end{array}\right]
\]
and those that cannot.
Note that $\Pf(S_0) = (-1)^n \cdot i^{2n} = 1$. 
The sign of the Pfaffian can decide which class a matrix $X$ is in.

\begin{thm}
\label{thm:K2_of_Reals}
Suppose $\left\Vert S^{2}-I\right\Vert <1$ and $S^{*}=S$ and $S^{\mathrm{T}}=-S$ for some $S$ in $\mathbf{M}_{4n}(\mathbb{C})$.
Then there is a real orthogonal $W$ of determinant one with 
\[
	\left\Vert S-WS_{0}W^{*}\right\Vert \leq\left\Vert S^{2}-I\right\Vert 
\]
if and only if $\mathrm{Pf}(S)$ is strictly positive.
\end{thm}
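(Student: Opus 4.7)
The plan is to mimic Lemma~\ref{lem:DiagAntiDual-Hermitian} and Theorem~\ref{thm:K2_of_Quaternions}, replacing the quaternionic spectral theorem with the real spectral theorem for skew-symmetric matrices. The two symmetry hypotheses $S^{*}=S$ and $S^{\mathrm{T}}=-S$ together force $\overline{S}=-S$, so $S=iA$ for a uniquely determined real skew-symmetric $A\in\mathbf{M}_{4n}(\mathbb{R})$. Writing $A=QDQ^{\mathrm{T}}$ in Youla normal form, with $Q$ real orthogonal and $D$ block diagonal with $2\times 2$ blocks $\begin{bmatrix}0 & \mu_{j}\\ -\mu_{j} & 0\end{bmatrix}$, I first conjugate individual blocks by $\begin{bmatrix}0 & 1\\ 1 & 0\end{bmatrix}$ as needed to arrange $\mu_{j}>0$ for every $j$; each such flip simultaneously reverses one $\mu_{j}$ and the sign of $\det(Q)$. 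With this normalization in force, the identities $\det(Q)\prod_{j}\mu_{j}=\Pf(A)$ and $\prod_{j}\mu_{j}=|\Pf(A)|$ give $\det(Q)=\mathrm{sgn}\,\Pf(A)=(-1)^{n}\,\mathrm{sgn}\,\Pf(S)$, using $\Pf(S)=i^{2n}\Pf(A)=(-1)^{n}\Pf(A)$.

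Applying the same normalization to $S_{0}$ produces $S_{0}=Q_{0}(iD_{1})Q_{0}^{\mathrm{T}}$, where $D_{1}$ has every block equal to $\begin{bmatrix}0 & 1\\ -1 & 0\end{bmatrix}$; since $\Pf(S_{0})=+1$, the sign calculation above gives $\det(Q_{0})=(-1)^{n}$. Assuming $\Pf(S)>0$, I therefore have $\det(Q)=\det(Q_{0})=(-1)^{n}$, so $W:=QQ_{0}^{\mathrm{T}}$ lies in $SO(4n,\mathbb{R})$, and
\[
S-WS_{0}W^{*}=Q\,i(D-D_{1})\,Q^{\mathrm{T}},
\]
whose operator norm equals $\max_{j}|\mu_{j}-1|$. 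Because $\|S^{2}-I\|=\max_{j}|\mu_{j}^{2}-1|$ (computed directly from the normal form) and $\mu_{j}>0$ gives $|\mu_{j}-1|=|\mu_{j}^{2}-1|/(\mu_{j}+1)\leq|\mu_{j}^{2}-1|$, this yields the desired bound.

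For the converse, given such a $W$, I would consider the straight-line path $S_{t}=tS+(1-t)WS_{0}W^{*}$. Each $S_{t}$ is Hermitian and anti-symmetric, so $S_{t}=iA_{t}$ with $A_{t}$ real skew-symmetric, and hence $\Pf(S_{t})=(-1)^{n}\Pf(A_{t})$ is a continuous real-valued function of $t$. Because $\|S_{t}-WS_{0}W^{*}\|=t\|S-WS_{0}W^{*}\|<1$ and $(WS_{0}W^{*})^{2}=I$, Weyl's inequality confines the spectrum of $S_{t}$ to a neighborhood of $\{\pm 1\}$ bounded away from $0$, so each $S_{t}$ is invertible and $\Pf(S_{t})$ is nowhere zero. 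Therefore $\mathrm{sgn}\,\Pf(S)=\mathrm{sgn}\,\Pf(WS_{0}W^{*})=\mathrm{sgn}(\det(W)\Pf(S_{0}))=+1$.

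The main obstacle is the sign bookkeeping: one has to verify that the $(-1)^{n}$ twist placed on the first block of $S_{0}$ is precisely the convention needed so that the two determinant equations $\det(Q)=(-1)^{n}\,\mathrm{sgn}\,\Pf(S)$ and $\det(Q_{0})=(-1)^{n}$ align when $\Pf(S)>0$, putting $W=QQ_{0}^{\mathrm{T}}$ in $SO(4n,\mathbb{R})$ rather than the other coset. Once those signs are tracked, the analytic content---the eigenvalue comparison $|\mu_{j}-1|\leq|\mu_{j}^{2}-1|$ and the continuity-of-the-Pfaffian argument for the converse---is routine.
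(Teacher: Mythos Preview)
Your proposal is correct and follows essentially the same route as the paper: both set $A=-iS$, invoke the real spectral theorem for skew-symmetric matrices (Youla normal form), track $\det$ of the conjugating orthogonal via the Pfaffian identity $\Pf(QDQ^{\mathrm T})=\det(Q)\Pf(D)$, and then use $|\mu_j-1|\le|\mu_j^2-1|$ for the norm bound and continuity of the Pfaffian along a straight segment for the converse. The only cosmetic difference is that the paper normalizes $D$ to match the sign pattern of $S_0$ directly (forcing $\operatorname{sgn} a_1=(-1)^n$ so that the single $U$ already lies in $SO(4n)$), whereas you normalize all $\mu_j>0$ and absorb the $(-1)^n$ into an auxiliary $Q_0$ with $W=QQ_0^{\mathrm T}$; the determinant bookkeeping is identical.
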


\begin{proof}
We first prove the ``if'' part. 
Let $X = -iS$ and observe that $X^* = X^{\mathrm{T}} = -X$. 
It follows from \cite[Theorem 9.4]{HastLorTheoryPractice} that we can find a real orthogonal matrix $U$ and real numbers $a_1, a_2, \ldots, a_{2n}$ such that $X  = U D U^{\mathrm{T}}$, where 
\[
	D =	\begin{pmatrix}
			0 & a_1 & & & & \\
			-a_1 & 0 & & & & \\
			& & 0 & a_2 & & & \\
			& & -a_2 & 0 & & & \\
			& & & & \ddots & & \\
			& & & & & 0 & a_{2n} \\
			& & & & & -a_{2n} & 0
		\end{pmatrix}. 
\]
We may assume that the sign of $a_1$ is $(-1)^n$ and that $a_2, a_3, \ldots a_{2n} > 0$. 
We now compute the Pfaffian of $X$ in two ways. 
Using that $X = -iS$ we get
\[
	\Pf(X) = \Pf(-iS) = (-i)^{2n} \Pf(S) = (-1)^n \Pf(S).
\]
Using instead that $X = U D U^{\mathrm{T}}$ we get
\[
	\Pf(X) = \Pf(U D U^{\mathrm{T}}) = \det(U) \Pf(D) = \det(U) a_1 a_2 \cdots a_{2n}. 
\]
Hence $\det(U)$ must be positive, in particular it is $1$. 
Finally we see that
\[
	\Vert S - U S_0 U^* \Vert = \Vert iD - S_0 \Vert \leq \Vert (iD)^2 - I  \Vert = \Vert S^2 - I \Vert.
\] 

Suppose now there is $W$ in $SL_{4n}(\mathbb{R})$ so that
\[
\left\Vert S-WS_{0}W^{*}\right\Vert <1.
\]
Then $S_{1}=W^{*}SW$ satisfies $S_{1}^{*}=S_{1}$ and $S_{1}^{\mathrm{T}}=-S_{1}$ and $\left\Vert S_{1}-S_{0}\right\Vert <1$. 
The path $S_{t}=(1-t)S_{0}+tS_{1}$ is within $1$ of $S_{0}$ and $S_{0}$ is a unitary, so each $S_{t}$
is invertible and skew-symmetric. 
Therefore the path  of real numbers $\mathrm{Pf}(S_{t})$ cannot change sign.
Since $\mathrm{Pf}(S_{0}) = 1$ we must have $\mathrm{Pf}(S_{t}) > 0$ for all $t$. 
\end{proof}

\begin{thm}
\label{thm:K2_of_Reals(twisted)} 
Suppose $\left\Vert S^{2}-I\right\Vert <1$
and $S^{*}=S$ and $S^{\sharp\otimes\sharp}=-S$ for some $S$ in
$\mathbf{M}_{4n}(\mathbb{C})$. Then there is a unitary $W$ with
$W^{\sharp\otimes\sharp}=W^{*}$ with
\[
\left\Vert S-W\left[\begin{array}{cc}
I & 0\\
0 & -I\end{array}\right]W^{*}\right\Vert \leq\left\Vert S^{2}-I\right\Vert 
\]
if and only if the $K_{2}$ class represented by $S$ is trivial.
\end{thm}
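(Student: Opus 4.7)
The plan is to transport the problem through the isomorphism $\Phi \colon (\mathbf{M}_{4n}(\mathbb{C}),\sharp\otimes\sharp)\rightarrow(\mathbf{M}_{4n}(\mathbb{C}),\mathrm{T})$ from (\ref{eq:Phi-defined}) and then invoke Theorem~\ref{thm:K2_of_Reals}. I will set $\tilde{S}=\Phi(S)$ and $\tilde{\Lambda}=\Phi(\Lambda)$, where $\Lambda=\begin{bmatrix} I & 0\\ 0 & -I\end{bmatrix}$. Since $\Phi$ is unitary conjugation that intertwines $\sharp\otimes\sharp$ with $\mathrm{T}$, I will have $\tilde{S}^{*}=\tilde{S}$, $\tilde{S}^{\mathrm{T}}=-\tilde{S}$, and $\|\tilde{S}^{2}-I\|=\|S^{2}-I\|$, with $\tilde{\Lambda}$ similarly Hermitian and antisymmetric and with $\tilde{\Lambda}^{2}=I$. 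Because $\Phi$ induces an isomorphism of $K_{2}$-groups, the $K_{2}$ class of $S$ is trivial precisely when $\Pf(\tilde{S})>0$.

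The key preliminary step is to locate $\tilde{\Lambda}$ within the framework of Theorem~\ref{thm:K2_of_Reals}. A direct matrix computation with the explicit form of $U$ yields $\tilde{\Lambda}=\begin{bmatrix} 0 & iZ_{N}\\ iZ_{N} & 0\end{bmatrix}$, and combining the block-Pfaffian identity with $\det(Z_{N})=1$ will give $\Pf(\tilde{\Lambda})=+1$. Since $\tilde\Lambda^2=I$, the ``if'' direction of Theorem~\ref{thm:K2_of_Reals} will then produce a matrix $V_{0}\in SO(4n,\mathbb{R})$ with $\tilde{\Lambda}=V_{0}S_{0}V_{0}^{\mathrm{T}}$, so that $\tilde{\Lambda}$ and $S_{0}$ are $SO$-conjugate.

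From there the translation is mechanical. Under $\Phi$, the unitaries $W$ with $W^{\sharp\otimes\sharp}=W^{*}$ and $\det W=1$ correspond bijectively to $V\in SO(4n,\mathbb{R})$, and the target matrices transform as $W\Lambda W^{*}\leftrightarrow V\tilde{\Lambda}V^{\mathrm{T}}$. Substituting $\tilde{\Lambda}=V_{0}S_{0}V_{0}^{\mathrm{T}}$ and re-parametrizing via $V\mapsto VV_{0}$ identifies these target sets with those of Theorem~\ref{thm:K2_of_Reals}. Since $\Phi$ preserves operator norms, applying the ``if and only if'' of Theorem~\ref{thm:K2_of_Reals} to $\tilde{S}$ will yield both directions of the present statement. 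I expect the main obstacle to be pinning down the sign of $\Pf(\tilde{\Lambda})$; once this is in hand, the remainder is a formal exercise in bookkeeping through $\Phi$.
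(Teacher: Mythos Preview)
Your plan is essentially the same as the paper's proof: transport the problem through $\Phi$, verify $\mathrm{Pf}(\Phi(\Lambda))=1$ (your explicit block form $\begin{bmatrix}0 & iZ_N\\ iZ_N & 0\end{bmatrix}$ agrees with the paper's four-block display), apply Theorem~\ref{thm:K2_of_Reals} to get $V_0\in SO(4n,\mathbb{R})$ with $\Phi(\Lambda)=V_0S_0V_0^{\mathrm{T}}$, and then reparametrize by $V\mapsto VV_0$ to reduce both directions to Theorem~\ref{thm:K2_of_Reals} applied to $\Phi(S)$. The paper writes the final unitary as $W=\Phi^{-1}(W_2W_1^{*})$ with your $V_0$ playing the role of $W_1$, so the bookkeeping is identical.
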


\begin{proof}
We will derive this from Theorem~\ref{thm:K2_of_Reals} via the isomorphism
$\Phi$ from (\ref{eq:Phi-defined}). Recall this satisfies
$\Phi(X^{\sharp\otimes\sharp})=\left(\Phi(X)\right)^{\mathrm{T}}$
and, being a $*$-isomorphism, satisfies $\Phi(X^{*})=\left(\Phi(X)\right)^{*}$.
A short computation (with blocks in two different sizes) tells us
\[
\mathrm{Pf}\left(\Phi\left(\left[\begin{array}{cc}
I\\
 & -I\end{array}\right]\right)\right)=\mathrm{Pf}\left[\begin{array}{cccc}
0 & 0 & 0 & iI\\
0 & 0 & -iI & 0\\
0 & iI & 0 & 0\\
-iI & 0 & 0 & 0\end{array}\right]=1.
\]
We apply Theorem~\ref{thm:K2_of_Reals} to 
$\Phi\left(\begin{bmatrix}\begin{array}{cc}
I & 0\\
0 & -I\end{array}\end{bmatrix}\right)$ and find a real orthogonal $W_{1}$ so that
\[
\Phi\left(\begin{bmatrix}\begin{array}{cc}
I & 0\\
0 & -I\end{array}\end{bmatrix}\right)=W_{1}S_{0}W_{1}^{*}.
\]

The conditions on $S$ translates to $\left\Vert \Phi(S)^{2}-I\right\Vert <1$, $\Phi(S)^{*}=\Phi(S)$, and $\Phi(S)^{\mathrm{T}}=-\Phi(S)$. If the $K_2$-class of $S$ is trivial then $\mathrm{Pf}\left(\Phi(S)\right)$ is positive and so there is a real orthogonal
$W_{2}$ with
\[
\left\Vert \Phi\left(S\right)-W_{2}S_{0}W_{2}^{*}\right\Vert \leq\left\Vert \Phi(S)^{2}-I\right\Vert 
=
\left\Vert S^{2}-I\right\Vert .
\]
The desired unitary is then $W=\Phi^{-1}\left(W_{2}W_{1}^{*}\right)$. If on the other hand there is a unitary $W \in \mathbf{M}_{4n}(\mathbb{C})$ as in the statement of the theorem, then we see that $\Phi(W)W_1$ almost conjugates $\Phi(S)$ to $S_0$. Hence $\mathrm{Pf}\left(\Phi(S)\right)$ is positive, and so the $K_2$-class of $S$ is trivial. 
\end{proof}

\subsection{From $K_{-2}$ to $K_{2}$}

Behind our definition of the Bott index and the Pfaffian-Bott index
is a specific generator of $K_{-2}$ of $\left(C(S^{2}),\mathrm{id}\right)$.
The associated $R^{*}$-algebra here is $C(S^{2},\mathbb{R})$. We
skip over the actual defition of $K_{-2}$ and utilize a very important
part of Bott periodicity which is that there is a natural isomorphism
\cite{SchroderRealKtheory}
\[
K_{-2}\left(A\right)\cong K_{2}\left(A\otimes\mathbb{H}\right),
\]
for $R^{*}$-algebras, and which in terms of $C^{*,\tau}$-algebras is
\[
K_{-2}\left(A,\tau\right)
\cong 
K_{2}\left(A\otimes\mathbf{M}_{2}(\mathbb{C}),\tau\otimes\sharp\right).
\]
We take then $K_{2}\left(A\otimes\mathbf{M}_{2}(\mathbb{C}),\tau\otimes\sharp\right)$
as the definition of $K_{-2}\left(A,\tau\right)$. With this convention,
the generator of $K_{-2}\left(C(S^{2}),\mathrm{id}\right)$ is
the class of skew-$\tau$ invertible Hermitian elements represented
by
\[
b=\left[\begin{array}{cc}
x & y+iz\\
y-iz & -x\end{array}\right]
\]
where $x$, $y$ and $z$ are the coodinate functions if we regard
$S^{2}$ as the unit sphere in $\mathbb{R}^{3}$. A basic fact in
$K$-theory is that 
\[
K_{-2}\left(C(S^{2}),\mathrm{id}\right)\cong\mathbb{Z}.
\]
If we have an actual $*$-$\tau$-homomorphism $\varphi$ from $\left(C(S^{2}),\mathrm{id}\right)$
to some $\left(A,\tau\right)$, as in the proofs that follow, then
calculating $K_{-2}(\varphi)$ is just a matter of following $b$
over to where it lands in $\left(\mathbf{M}_{2}(A),\tau\otimes\sharp\right)$,
meaning
\[
\left[\begin{array}{cc}
\varphi(x) & \varphi(y)+i\varphi(z)\\
\varphi(y)-i\varphi(z) & -\varphi(x)\end{array}\right].
\]

\section{Spherical to cylindrical coordinates}

We consider various inclusions of commutative $C^{*,\tau}$-algebras,
all induced by surjections. The first we will encounter is
\begin{equation}
\iota_{1} \colon C(S^{2},\mathrm{id})\hookrightarrow C(S^{1}\times[-1,1],\mathrm{id})
\label{eq:cylindrical}
\end{equation}
which is unital and induced by 
\[
\left(e^{2\pi i\theta},t\right)
\mapsto
\left(\sqrt{1-t^{2}}\cos(\theta),\sqrt{1-t^{2}}\sin(\theta),t\right).
\]
We consider $C(S^{1}\times[-1,1],\mathrm{id})$ as universal for a
unitary $v$ and a positive contraction $k$ that commute. We also
need relations for the $\tau$ operation to create the identity involution
on the cylinder, so we require $v^{\tau}=v$ and $k^{\tau}=k$. In
term of generators and relations, the inclusion (\ref{eq:cylindrical})
operates via
\begin{align*}
h_{1} & \mapsto \frac{1}{2}\left(1-k^{2}\right)^{\frac{1}{2}}\left(v^{*}+v\right),\\
h_{2} & \mapsto \frac{1}{2}\left(1-k^{2}\right)^{\frac{1}{2}}\left(iv^{*}-iv\right),\\
h_{3} & \mapsto k.
\end{align*}

Generators and relations for real $C^{*}$-algebras have only been
considered implicitly, as in \cite[\S II]{GiordRealAF}. The relations
we need are rather basic, involving only real $C^{*}$-algebras that
are commutive or finite-dimensional, so we will not explore this topic
formally here. It has been investigated in \cite{AdamThesis}. 

\begin{thm}
\label{thm:extendSphereReal}
Suppose $(d_{n})$ is a sequence of natural numbers and that
\[
\varphi \colon C(S^{2},\mathrm{id})
\rightarrow
\prod\left(\mathbf{M}_{d_{n}}(\mathbb{C}),\mathrm{T}\right)\left/\bigoplus\left(\mathbf{M}_{d_{n}}(\mathbb{C}),\mathrm{T}\right)\right.,
\]
is a unital $*$-$\tau$-homomorphism. Then there exists a unital $*$-$\tau$-homo-morphism
$\psi$ so that
\[
\xymatrix{
C(S^1 \times [-1,1],\mathrm{id}) \ar@{-->}[dr] ^{\psi}
\\
C(S^{2},\mathrm{id})	\ar@{^{(}->}[u] ^{\iota_1}  \ar[r] _(0.3){\varphi}
	&	{\prod \left(\mathbf{M}_{d_{n}}(\mathbb{C}),\mathrm{T}\right) 
			\left/ \bigoplus \left(\mathbf{M}_{d_{n}}(\mathbb{C}),\mathrm{T}\right)
			\right.,
		}\\
}
\]
commutes.
\end{thm}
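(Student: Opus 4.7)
My plan is to use the universal description of $C(S^1\times[-1,1],\mathrm{id})$: giving a unital $*$-$\tau$-homomorphism out of it is the same as choosing a commuting pair $(V,K)$ in the target, with $V$ unitary, $K$ self-adjoint of norm at most one, both fixed by the involution. Reading off the formulas for $\iota_1$, the compatibility $\psi\circ\iota_1=\varphi$ forces $K=\varphi(h_3)$ and demands
\[
	(I-K^2)^{1/2}V=\varphi(h_1)+i\varphi(h_2).
\]
Write $Q=\prod(\mathbf{M}_{d_n}(\mathbb{C}),\mathrm{T})/\bigoplus(\mathbf{M}_{d_n}(\mathbb{C}),\mathrm{T})$ and $N:=\varphi(h_1)+i\varphi(h_2)$. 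Then in $Q$ we have $N^{\mathrm{T}}=N$, $NK=KN$, and $N^{*}N=I-K^{2}$ (from $h_{1}^{2}+h_{2}^{2}+h_{3}^{2}=1$). The entire theorem reduces to producing a symmetric unitary $V\in Q$ with $VK=KV$ and $N=(I-K^{2})^{1/2}V$.

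I would pass to representative matrices. Starting from any lifts $X_r^{(n)}\in\mathbf{M}_{d_n}(\mathbb{C})$ of $\varphi(h_r)$, the symmetrization
\[
	H_r^{(n)}=\tfrac{1}{4}\bigl(X_r^{(n)}+(X_r^{(n)})^{*}+(X_r^{(n)})^{\mathrm{T}}+\overline{X_r^{(n)}}\bigr)
\]
produces real symmetric self-adjoint lifts. Because $\varphi$ is a $*$-$\tau$-homomorphism, $\|[H_r^{(n)},H_s^{(n)}]\|\to 0$ and $\|\sum_r(H_r^{(n)})^{2}-I\|\to 0$. Apply a continuous functional calculus truncation to force $\|K^{(n)}\|\leq 1$ for $K^{(n)}:=H_3^{(n)}$, and set $N^{(n)}:=H_1^{(n)}+iH_2^{(n)}$. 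Then $N^{(n)}$ is complex symmetric with $\|(N^{(n)})^{*}N^{(n)}-(I-(K^{(n)})^{2})\|\to 0$ and $\|[N^{(n)},K^{(n)}]\|\to 0$.

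The matrix-level task is to construct, for each large $n$, a symmetric unitary $V^{(n)}\in\mathbf{M}_{d_n}(\mathbb{C})$ with $\|V^{(n)}(I-(K^{(n)})^{2})^{1/2}-N^{(n)}\|\to 0$ and $\|[V^{(n)},K^{(n)}]\|\to 0$. Symmetry is handled by a Takagi factorization $N^{(n)}=W_nD_nW_n^{\mathrm{T}}$ with $W_n$ unitary and $D_n\geq 0$ diagonal, which yields $V^{(n)}=W_nW_n^{\mathrm{T}}$ as a symmetric unitary polar part; on $\ker N^{(n)}$, stable under complex conjugation because $(N^{(n)})^{\mathrm{T}}=N^{(n)}$, extend via the identity in a real orthonormal basis. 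To secure commutation with $K^{(n)}$ I would use the smoothed variant
\[
	V^{(n)}_{\epsilon_n}=N^{(n)}\bigl((N^{(n)})^{*}N^{(n)}+\epsilon_n^{2}\bigr)^{-1/2}
\]
with $\epsilon_n\to 0$ chosen to decay slowly relative to the commutator and sphere-defect norms. Since $(N^{(n)})^{*}N^{(n)}=I-(K^{(n)})^{2}+o(1)$, the smoothing factor lies in the functional calculus of $K^{(n)}$ up to $o(1)$, forcing $\|[V^{(n)}_{\epsilon_n},K^{(n)}]\|\to 0$, after which a small unitary correction (guided by the Takagi data) restores exact unitarity and symmetry without disturbing the estimates.

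The principal obstacle is reconciling exact unitarity, exact symmetry, and asymptotic commutation with $K^{(n)}$ in a single matrix, a tension concentrated where $|K^{(n)}|$ is near $1$ and $N^{(n)}$ becomes near-singular. The Takagi factorization handles symmetry, the smoothed polar step handles commutation, and a small correction recovers exact unitarity. Passing to $Q$, all approximate identities become exact: $V=[V^{(n)}]$ and $K=[K^{(n)}]$ satisfy every required relation on the nose, so by the universal property of $C(S^1\times[-1,1],\mathrm{id})$ they determine the desired unital $*$-$\tau$-homomorphism $\psi$ making the diagram commute.
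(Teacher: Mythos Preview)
Your reduction is correct and matches the paper: the theorem is equivalent to producing, in the quotient $Q$, a symmetric unitary $V$ commuting with $K=\varphi(h_3)$ such that $V(I-K^2)^{1/2}=N:=\varphi(h_1)+i\varphi(h_2)$. The lifting of the $H_r$ to real symmetric matrices is also set up correctly.

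The gap is in the construction of $V^{(n)}$. Your two candidates are incompatible in exactly the region you flag. The smoothed polar $V^{(n)}_{\epsilon_n}=N^{(n)}((N^{(n)})^*N^{(n)}+\epsilon_n^2)^{-1/2}$ is symmetric and does asymptotically commute with $K^{(n)}$, but $\|(V^{(n)}_{\epsilon_n})^*V^{(n)}_{\epsilon_n}-I\|=\|\epsilon_n^2((N^{(n)})^*N^{(n)}+\epsilon_n^2)^{-1}\|$, and this is close to $1$, not to $0$, on the spectral subspace where $|K^{(n)}|$ is near $1$ and $N^{(n)}$ is near-singular. The Takagi polar $W_nW_n^{\mathrm T}$ is an exact symmetric unitary, but the Takagi data $W_n$ is determined only up to a unitary acting within each singular-value eigenspace, and nothing in your argument pins it down so as to approximately commute with $K^{(n)}$. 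The ``small unitary correction'' you invoke to pass from one to the other therefore has norm of order $1$ on the bad spectral subspace, and you give no mechanism for choosing it to be simultaneously symmetric and asymptotically $K^{(n)}$-commuting. This is not a technicality: in the purely complex case (no transpose symmetry) the very same reduction applies, yet the extension \emph{fails} when the Bott index is nonzero. Your argument never uses the symmetry $N^{\mathrm T}=N$ in a way that could distinguish the two situations, so as written it would prove too much.

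The paper resolves exactly this point by passing to the doubled algebra. One forms the Hermitian, anti-self-dual Bott matrix $S_n=\begin{bmatrix}H_{n,3}&N^{(n)}\\(N^{(n)})^*&-H_{n,3}\end{bmatrix}$, with $\|S_n^2-I\|\to 0$. The vanishing of $K_2(\mathbf{M}_{2N}(\mathbb C),\sharp)$ (Theorem~\ref{thm:K2_of_Quaternions}) yields a symplectic unitary $W_n=\begin{bmatrix}A_n&B_n\\-\overline{B_n}&\overline{A_n}\end{bmatrix}$ with $W_n^*\mathrm{diag}(I,-I)W_n\approx S_n$; after a small perturbation (Lemma~\ref{lem:bothInvertableIsDense}) both blocks $A_n,B_n$ are invertible. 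One then sets $U=\mathrm{polar}(A)^*\mathrm{polar}(B)$ in the quotient, and Lemmas~\ref{lem:ProductOfPolars} and \ref{lem:SymmetryInPolars} give $U^{\tau}=U$, while block identities coming from $W^*\mathrm{diag}(I,-I)W=S$ force $[U,H_3]=0$ and $U(I-H_3^2)^{1/2}=H_1+iH_2$. The passage to $2\times 2$ blocks with invertible entries is precisely what regularizes the polar decomposition near the poles, and the symplectic structure is what encodes the $K$-theoretic triviality that your direct approach never invokes.
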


\begin{proof}
Let $\pi \colon {\prod \left(\mathbf{M}_{d_{n}}(\mathbb{C}),\mathrm{T}\right)} \twoheadrightarrow {\prod \left(\mathbf{M}_{d_{n}}(\mathbb{C}),\mathrm{T}\right) 
			\left/ \bigoplus \left(\mathbf{M}_{d_{n}}(\mathbb{C}),\mathrm{T}\right)
			\right.,
}$ be the quotient map, and let $H_{r}$ be the image under $\varphi$ of the generators $h_{r}$ of $C(S^{2})$. 
Select lifts under $\pi$ of the $H_r$, so matrices $H_{n,r}\in\mathbf{M}_{d_{n}}(\mathbb{C})$.
Taking averages and fiddling with functional calculus, we can assume
that these are contractions with
\begin{equation}
H_{n,r}=H_{n,r}^{*}=H_{n,r}^{\mathrm{T}}.
\label{eq:symmetriesOnHr}
\end{equation}
Let $\pi$ denote the quotient map. Since 
\[
H_{r}=\pi\left(\left\langle H_{1,r},H_{2,r},\dots\right\rangle \right)
\]
we have that as $n \to \infty$
\begin{equation}
\left\Vert H_{n,r}H_{n,s}-H_{n,s}H_{n,r}\right\Vert \rightarrow0\quad\mbox{ and}\quad\left\Vert \sum_r H_{n,r}^{2}-I\right\Vert 
\rightarrow 0
\label{eq:softSphere_1}
\end{equation}
When defining $\psi$ by where to send generators we can ignore any
initial segment. Therefore we may assume, without loss of generality, that
\begin{equation}
\left\Vert H_{n,r}H_{n,s}-H_{n,s}H_{n,r}\right\Vert <\frac{1}{4}
\quad\mbox{ and}\quad
\left\Vert \sum_r H_{n,r}^{2}-I\right\Vert <\frac{1}{4},
\label{eq:SmallCommutators}
\end{equation}
for all $n$. 
We apply \cite[Lemma 3.2]{HastingsLoringWannier} to
\[
S_{n}=\left[\begin{array}{cc}
H_{n,3} & H_{n,1}+iH_{n,2}\\
H_{n,1}-iH_{n,2} & -H_{n,3}\end{array}\right]
\]
to find $\left\Vert S_{n}^{2}-I\right\Vert <1$ for all $n$, and
$\left\Vert S_{n}^{2}-I\right\Vert \rightarrow0$. By (\ref{eq:symmetriesOnHr})
we have $S_{n}^{*}=S_{n}$ and $S_{n}^{\sharp}=-S_{n}$. 

Let $\delta_{n}$ be some numbers with $\delta_{n}\rightarrow0$ and
\[
\left\Vert S_n^{2}-I\right\Vert <\delta_{n}<1.
\]
Theorem~\ref{thm:K2_of_Quaternions} provides us with $A_{n}$ and
$B_{n}$ so that\[
W_{n}=\left[\begin{array}{cc}
A_{n} & B_{n}\\
-\overline{B_{n}} & \overline{A_{n}}\end{array}\right]
\]
is a symplectic unitary (so of determinant one) with
\[
\left\Vert S_{n}-W_{n}^{*}\left[\begin{array}{cc}
I & 0\\
0 & -I\end{array}\right]W_{n}\right\Vert <\delta_{n}.
\]
Lemma~\ref{lem:bothInvertableIsDense} allows us to perturb $A_{n}$
and $B_{n}$ a little so as to keep these conditions and have $A_{n}$
and $B_{n}$ invertible. Lemmas~\ref{lem:ProductOfPolars} and \ref{lem:SymmetryInPolars}
tells us
\[
\mathrm{polar}(A_{n}^{*}B_{n})=\left(\mathrm{polar}(A_{n})\right)^{*}\mathrm{polar}(B_{n}),
\]
and 
\[
\left(\mathrm{polar}(A_{n}^{*}B_{n})\right)^{\mathrm{T}}=\mathrm{polar}(A_{n}^{*}B_{n}).
\]

Now we follow the procedure in \cite[Section III.C]{HastingsLoringWannier}, which
has a history going back to \cite[Section 2]{LinFromC(X)}. However, we work
in terms of 
\[
A=\pi\left(\left\langle A_{1},A_{2},\dots\right\rangle \right),
\]
\[
B=\pi\left(\left\langle B_{1},B_{2},\dots\right\rangle \right),
\]
\[
S=\left[\begin{array}{cc}
H_{3} & H_{1}+iH_{2}\\
H_{1}-iH_{2} & -H_{3}\end{array}\right],
\]
and
\[
W=\left[\begin{array}{cc}
A & B\\
-B^{*\tau} & A^{*\tau}\end{array}\right],
\]
as we are not aiming for a quantitative result. We use $\tau$ to
denote the operation in the quotient induced by all the matrix
transpose operations. We can see that $W$ is a unitary with
\begin{equation}
S=W^{*}\left[\begin{array}{cc}
1 & 0\\
0 & -1\end{array}\right]W.
\label{eq:WconguatesS}
\end{equation}
From $WW^{*}=I$ we derive $AA^{*}+BB^{*}=1$ and $AB^{\tau}=BA^{\tau}.$
From (\ref{eq:WconguatesS}) we derive
\[
\left[\begin{array}{cc}
A^{*} & 0\\
B^{*} & 0\end{array}\right]\left[\begin{array}{cc}
A & B\\
0 & 0\end{array}\right]=\left[\begin{array}{cc}
\frac{1}{2}+\frac{1}{2}H_{3} & \frac{1}{2}H_{1}+\frac{i}{2}H_{2}\\
\frac{1}{2}H_{1}-\frac{i}{2}H_{2} & \frac{1}{2}-\frac{1}{2}H_{3}\end{array}\right],
\]
so we find
\[
A^{*}A=\frac{1}{2}+\frac{1}{2}H_{3},
\]
\[
A^{*}B=\frac{1}{2}H_{1}+\frac{i}{2}H_{2}, \text{ and }
\]
\[
B^{*}B=\frac{1}{2}-\frac{1}{2}H_{3}.
\]
Combining two of these we find $A^{*}A+B^{*}B=I.$ 

Let 
\[
Z=\pi\left(\left\langle \mathrm{polar}(A_{1}),\mathrm{polar}(A_{2}),\dots\right\rangle \right),
\]
and
\[
V=\pi\left(\left\langle \mathrm{polar}(B_{1}),\mathrm{polar}(B_{2}),\dots\right\rangle \right),
\]
so that
\[
A=Z\left(A^{*}A\right)^{\frac{1}{2}}=\left(AA^{*}\right)^{\frac{1}{2}}Z,
\]
and 
\[
B=V\left(B^{*}B\right)^{\frac{1}{2}}=\left(BB^{*}\right)^{\frac{1}{2}}V.
\]
Then
\[
V\left(\frac{1}{2}+\frac{1}{2}H_{3}\right)V^{*}=VA^{*}AV^{*}=1-VB^{*}BV^{*}=1-BB^{*}=AA^{*}
\]
and
\[
Z^{*}AA^{*}Z=A^{*}A=\frac{1}{2}+\frac{1}{2}H_{3}.
\]
Put together, these tell us
\[
V^{*}Z\left(\frac{1}{2}+\frac{1}{2}H_{3}\right)Z^{*}V=\frac{1}{2}+\frac{1}{2}H_{3},
\]
and so the unitary $U=Z^{*}V$ commutes with $H_{3}$. Since
\[
U=\pi\left(\left\langle \left(\mathrm{polar}(A_{1})\right)^{*}\mathrm{polar}(B_{1}),\left(\mathrm{polar}(A_{2})\right)^{*}\mathrm{polar}(B_{3}),\dots\right\rangle \right)
\]
we have the additional symmetry $U^{\tau}=U.$ 
Therefore we can define 
\[
	\psi \colon C(S^1 \times [-1,1],\rm{id}) \to \prod\left(\mathbf{M}_{d_{n}}(\mathbb{C}),\mathrm{T}\right)\left/\bigoplus\left(\mathbf{M}_{d_{n}}(\mathbb{C}),\mathrm{T}\right)\right.,
\]
by sending the universal generators of $C(S^1 \times [-1,1],\rm{id})$ to $U$ and $H_{3}$. 
This makes the diagram
commute because
\begin{align*}
U\left(1-H_{3}^{2}\right)^{\frac{1}{2}} & =2Z^{*}V\left(B^{*}B\right)^{\frac{1}{2}}\left(A^{*}A\right)^{\frac{1}{2}}\\
 & =2Z^{*}\left(1-BB^{*}\right)^{\frac{1}{2}}B\\
 & =2Z^{*}\left(AA^{*}\right)^{\frac{1}{2}}B\\
 & =2A^{*}B\\
 & =H_{1}+iH_{2}. \qedhere
\end{align*}
\end{proof}

\begin{thm}
\label{thm:extendSphereSelfDual}
Suppose $d_{n}$ is a sequence of
natural numbers and that 
\[
\varphi \colon C(S^{2},\mathrm{id})\rightarrow\prod\left(\mathbf{M}_{2d_{n}}(\mathbb{C}),\sharp\right)\left/\bigoplus\left(\mathbf{M}_{2d_{n}}(\mathbb{C}),\sharp\right)\right.
\]
is a unital $*$-$\tau$-homomorphism. If $K_{-2}(\varphi)=0$ then
there exists a unital $*$-$\tau$-homomorphism $\psi$ so that
\[
\xymatrix{
C(S^1 \times [-1,1],\mathrm{id}) \ar@{-->}[dr] ^{\psi}
\\
C(S^{2},\mathrm{id})	\ar@{^{(}->}[u] ^{\iota_1}  \ar[r] _(0.3){\varphi}
	&	{\prod \left(\mathbf{M}_{2d_{n}}(\mathbb{C}),\sharp \right) 
			\left/ \bigoplus \left(\mathbf{M}_{2d_{n}}(\mathbb{C}),\sharp\right)
			\right.
		}\\
}
\]
commutes.
\end{thm}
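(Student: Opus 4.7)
The plan is to follow the proof of Theorem~\ref{thm:extendSphereReal} almost verbatim, with three modifications dictated by switching from the transpose to the $\sharp$-operation. The Bott matrix $S_{n}=B(H_{n,1},H_{n,2},H_{n,3})$ will now live in $\mathbf{M}_{4d_{n}}(\mathbb{C})$ and carry the symmetry $S_{n}^{\sharp\otimes\sharp}=-S_{n}$ in place of $S_{n}^{\sharp}=-S_{n}$; Theorem~\ref{thm:K2_of_Reals(twisted)} replaces Theorem~\ref{thm:K2_of_Quaternions}, and its $K_{2}$ hypothesis must be arranged; and the symmetry propagation via Lemmas~\ref{lem:bothInvertableIsDense}, \ref{lem:ProductOfPolars} and \ref{lem:SymmetryInPolars} is carried out with $\tau=\sharp$ rather than $\tau=\mathrm{T}$.

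First, lift $H_{r}=\varphi(h_{r})$ to contractions $H_{n,r}\in\mathbf{M}_{2d_{n}}(\mathbb{C})$ and average each with its adjoint and its $\sharp$-dual so that $H_{n,r}=H_{n,r}^{*}=H_{n,r}^{\sharp}$.  After discarding an initial segment we may assume
\[
\left\Vert H_{n,r}H_{n,s}-H_{n,s}H_{n,r}\right\Vert <\tfrac{1}{4}\quad\text{and}\quad\left\Vert \sum_{r}H_{n,r}^{2}-I\right\Vert <\tfrac{1}{4}
\]
for every $n$.  Form $S_{n}$ as above; \cite[Lemma 3.2]{HastingsLoringWannier} gives $\left\Vert S_{n}^{2}-I\right\Vert <1$ tending to zero, and a direct block computation using $H_{n,r}^{\sharp}=H_{n,r}$ and the $\mathbb{C}$-linearity of $\sharp$ confirms $S_{n}^{*}=S_{n}$ and $S_{n}^{\sharp\otimes\sharp}=-S_{n}$.

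The new ingredient is to translate the hypothesis $K_{-2}(\varphi)=0$ into a condition on the individual $S_{n}$.  Under the convention $K_{-2}(A,\tau)=K_{2}(\mathbf{M}_{2}(A),\tau\otimes\sharp)$, the image of the generator of $K_{-2}(C(S^{2}),\mathrm{id})$ is represented by $\mathrm{polar}(S)$ in $K_{2}$ of the quotient with its $\sharp\otimes\sharp$ structure.  Because the $K_{2}$-group of a sequence quotient $\prod\left(\mathbf{M}_{4d_{n}}(\mathbb{C}),\sharp\otimes\sharp\right)\big/\bigoplus\left(\mathbf{M}_{4d_{n}}(\mathbb{C}),\sharp\otimes\sharp\right)$ is computed as the quotient of $\prod\mathbb{Z}_{2}$ by $\bigoplus\mathbb{Z}_{2}$, triviality of the quotient class forces $[\mathrm{polar}(S_{n})]=0$ in $K_{2}(\mathbf{M}_{4d_{n}}(\mathbb{C}),\sharp\otimes\sharp)$ for all but finitely many $n$; truncate again to assume this holds for every $n$.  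Theorem~\ref{thm:K2_of_Reals(twisted)} then provides a unitary $W_{n}$ with $W_{n}^{\sharp\otimes\sharp}=W_{n}^{*}$ and
\[
\left\Vert S_{n}-W_{n}\left[\begin{array}{cc}I & 0\\ 0 & -I\end{array}\right]W_{n}^{*}\right\Vert \le\left\Vert S_{n}^{2}-I\right\Vert \to 0.
\]

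From here the argument is identical to the final part of Theorem~\ref{thm:extendSphereReal}.  Lemma~\ref{lem:bothInvertableIsDense} perturbs each $W_{n}$ so that its blocks $A_{n}$ and $B_{n}$ are invertible; Lemma~\ref{lem:ProductOfPolars} identifies $\mathrm{polar}(A_{n})^{*}\mathrm{polar}(B_{n})$ with $\mathrm{polar}(A_{n}^{*}B_{n})$; and Lemma~\ref{lem:SymmetryInPolars}, now with $\tau=\sharp$, shows this polar part is self-dual.  Passing to the quotient, the element $U=Z^{*}V$ built from the polar parts is a self-dual unitary commuting with $H_{3}$, and the same computation as before gives $U(1-H_{3}^{2})^{1/2}=H_{1}+iH_{2}$, so sending the universal unitary to $U$ and the universal positive contraction to $H_{3}$ defines the required $\psi$.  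The main obstacle is the $K$-theoretic translation in the third paragraph: in the complex symmetric case it was vacuous because $K_{2}(\mathbf{M}_{2N}(\mathbb{C}),\sharp)=0$, but in the self-dual case one must genuinely invoke the sequence-algebra description of $K_{2}$ to convert the global hypothesis $K_{-2}(\varphi)=0$ into eventual triviality of the per-$n$ obstruction.
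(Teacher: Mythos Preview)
Your proof is correct and follows essentially the same approach as the paper's own proof. Two minor points worth noting: to apply Lemma~\ref{lem:bothInvertableIsDense} you need $\det(W_n)=1$, which the paper mentions explicitly (it follows from the construction in the proof of Theorem~\ref{thm:K2_of_Reals(twisted)}, though not from its statement); and your assertion that $K_2$ of the sequence quotient equals $\prod\mathbb{Z}_2/\bigoplus\mathbb{Z}_2$ is stronger than what the paper claims or what is actually needed---the paper simply states that the hypothesis $K_{-2}(\varphi)=0$ forces the Pfaffian--Bott index of $(H_{n,1},H_{n,2},H_{n,3})$ to be trivial for large $n$, without computing the full $K_2$-group of the quotient.
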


\begin{proof}
The proof begins as before, with $H_{n,r}$ in $\mathbf{M}_{2d_{n}}(\mathbb{C})$
and with (\ref{eq:symmetriesOnHr}) replaced by
\[
H_{n,r}=H_{n,r}^{*}=H_{n,r}^{\mathrm{\sharp}}.
\]
After making the reduction to get (\ref{eq:SmallCommutators}), the
Pfaffian-Bott index is well defined for all $n$. The assumption on
the $K$-theory of $\varphi$ tells us that this index is zero for
large $n$. Further truncating the sequences, we can reduce to the
case where $S_{n}$ represents the trivial $K_{2}$ class for all
$n$. 

Let $\delta_{n}$ be some numbers with $\delta_{n}\rightarrow0$ and
\[
\left\Vert S^{2}-I\right\Vert <\delta_{n}<1.
\]
Theorem~\ref{thm:K2_of_Reals(twisted)} provides us with 
\[
W_{n}=\left[\begin{array}{cc}
A_{n} & B_{n}\\
-\overline{B_{n}} & \overline{A_{n}}\end{array}\right]
\]
that is a unitary of determinant one with $W_{n}^{\sharp\otimes\sharp}=W_{n}^{*}$
and
\[
\left\Vert S_{n}-W_{n}^{*}\left[\begin{array}{cc}
I & 0\\
0 & -I\end{array}\right]W_{n}\right\Vert <\delta_{n}.
\]
Lemma~\ref{lem:bothInvertableIsDense} again allows us to reduce
to the case where $A_{n}$ and $B_{n}$ are invertible. Lemmas~\ref{lem:ProductOfPolars}
and \ref{lem:SymmetryInPolars} tell us
\[
\mathrm{polar}(A_{n}^{*}B_{n})=\left(\mathrm{polar}(A_{n})\right)^{*}\mathrm{polar}(B_{n}),
\]
and 
\[
\left(\mathrm{polar}(A_{n}^{*}B_{n})\right)^{\sharp}=\mathrm{polar}(A_{n}^{*}B_{n}).
\]
The rest of the proof goes though unchanged, although $\tau$ in the
quotient is that derived from the sequence of $\sharp$ operations.
\end{proof}

We now wish to proceed as in \cite[Section 6.3]{ELP-pushBusby}, and
so need a way to ``poke holes'' in open patches. Let $\Omega$ denote
the open unit disk
\[
\Omega=\left\{ (x,y)\left|\strut x^{2}+y^{2}<1\right.\right\} 
\]
and 
\[
\Omega^{[1]}=\left\{ (x,y)\left|\strut1\leq x^{2}+y^{2}<2\right.\right\} .
\]
We have a proper surjective continuous map $\Omega^{[1]}\rightarrow\Omega$
srinking the inner circle to a point. 
We think of $\Omega^{[1]}$ as $\Omega$ with $\mathbf{0}$ replaced
by a circle. See Figure~\ref{fig:The-map-of-Omega-1}.
This gives us a map 
\[
\iota \colon C_{0}\left(\Omega\right)\hookrightarrow C_{0}\left(\Omega^{[1]}\right).
\]

\begin{figure}
\includegraphics[bb=00bp 00bp 500bp 400bp,clip,scale=0.5]{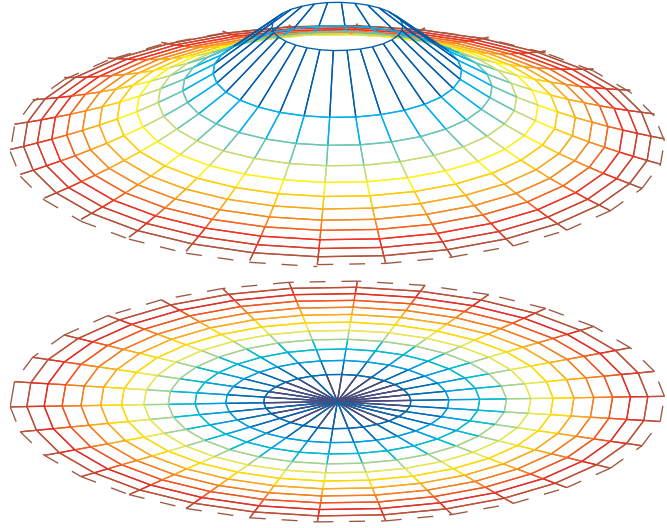}
\caption{The map of $\Omega^{[1]}$ onto $\Omega$.\label{fig:The-map-of-Omega-1}}
\end{figure}

\begin{cor}
\label{cor:OneVolcationOnOmega} 
Suppose 
\[
\varphi \colon C_{0}(\Omega,\mathrm{id})
\rightarrow
\prod\left(\mathbf{M}_{d_{n}}(\mathbb{C}),\tau_{n}\right)\left/\bigoplus\left(\mathbf{M}_{d_{n}}(\mathbb{C}),\tau_{n}\right)\right.
\]
is a $*$-$\tau$-homomorphism with $\tau_{n}=\mathrm{T}$ for all $n$ or
$\tau_{n}=\sharp$ and $d_n$ even for all $n$.
If $K_{-2}(\varphi)=0$ then
there exists a $*$-$\tau$-homomorphism $\psi$ so that 
\[
\xymatrix{
C(\Omega^{[1]},\mathrm{id}) \ar@{-->}[dr] ^{\psi}
\\
C(\Omega,\mathrm{id})	\ar@{^{(}->}[u]^-{\iota}  \ar[r]_-{\varphi}
	&	{\prod \left(\mathbf{M}_{d_{n}}(\mathbb{C}),\tau_n \right) 
			\left/ \bigoplus \left(\mathbf{M}_{d_{n}}(\mathbb{C}),\tau_n \right)
			\right.
		}\\
}
\]
commutes.
\end{cor}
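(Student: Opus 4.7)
The plan is to reduce the corollary to Theorems \ref{thm:extendSphereReal} and \ref{thm:extendSphereSelfDual} via one-point compactification, exploiting the fact that $\Omega$ is homeomorphic to $S^{2}$ minus a point and $\Omega^{[1]}$ is homeomorphic to $S^{1}\times[-1,1)$. First I would observe the commutative diagram of $\tau$-spaces
\[
\begin{array}{ccc}
S^{1}\times[-1,1) & \longrightarrow & \Omega\\
\cap & & \cap\\
S^{1}\times[-1,1] & \longrightarrow & S^{2}
\end{array}
\]
in which the horizontal maps collapse one boundary circle (respectively, the inner circle of $\Omega^{[1]}$) onto a single point, and the vertical inclusions are open. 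Dualizing, $\iota\colon C_{0}(\Omega,\mathrm{id})\hookrightarrow C_{0}(\Omega^{[1]},\mathrm{id})$ is exactly the restriction to ideals of $\iota_{1}\colon C(S^{2},\mathrm{id})\hookrightarrow C(S^{1}\times[-1,1],\mathrm{id})$ once we identify $C_{0}(\Omega^{[1]})$ with the ideal of functions in $C(S^{1}\times[-1,1])$ vanishing on the outer boundary circle.

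Next I would extend $\varphi$ to a unital $*$-$\tau$-homomorphism $\tilde{\varphi}\colon C(S^{2},\mathrm{id})\to\prod(\mathbf{M}_{d_{n}}(\mathbb{C}),\tau_{n})/\bigoplus(\mathbf{M}_{d_{n}}(\mathbb{C}),\tau_{n})$ by unitization, sending the adjoined unit to the unit of the quotient. Using the split exact sequence
\[
0\to C_{0}(\Omega)\to C(S^{2})\to\mathbb{C}\to0,
\]
one computes $K_{-2}(\mathbb{C},\mathrm{id})=KO_{-2}(\mathbb{R})=0$, so the inclusion induces an isomorphism $K_{-2}(C_{0}(\Omega),\mathrm{id})\cong K_{-2}(C(S^{2}),\mathrm{id})$. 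Therefore the hypothesis $K_{-2}(\varphi)=0$ transfers directly to $K_{-2}(\tilde{\varphi})=0$ (this is automatic when $\tau_{n}=\mathrm{T}$, since then the target $K$-group vanishes by Theorem~\ref{thm:K2_of_Quaternions}-style considerations, and is the needed input when $\tau_{n}=\sharp$).

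Now I invoke Theorem~\ref{thm:extendSphereReal} in the case $\tau_{n}=\mathrm{T}$, or Theorem~\ref{thm:extendSphereSelfDual} in the case $\tau_{n}=\sharp$, to obtain a unital $*$-$\tau$-homomorphism $\tilde{\psi}\colon C(S^{1}\times[-1,1],\mathrm{id})\to\prod/\bigoplus$ with $\tilde{\psi}\circ\iota_{1}=\tilde{\varphi}$. Define $\psi$ as the restriction of $\tilde{\psi}$ to the ideal $C_{0}(\Omega^{[1]})\subseteq C(S^{1}\times[-1,1])$. This is automatically a $*$-$\tau$-homomorphism, and for $f\in C_{0}(\Omega)$ we compute
\[
\psi(\iota(f))=\tilde{\psi}(\iota_{1}(f))=\tilde{\varphi}(f)=\varphi(f),
\]
where the first equality uses that $\iota_{1}(f)$ lies in the ideal $C_{0}(\Omega^{[1]})$ because $f$ vanishes at the point of $S^{2}$ that $\iota_{1}$ sends the outer boundary circle to.

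The main obstacle is not any one step, but making sure the two compactifications are genuinely compatible: the semi-open annulus $\Omega^{[1]}$ must be identified with the complement of one boundary circle of $S^{1}\times[-1,1]$ in a way that sends the inner circle of $\Omega^{[1]}$ onto the \emph{other} boundary circle (the one $\iota_{1}$ collapses to the origin of $\Omega$). Once this geometric alignment is fixed, the $K$-theory bookkeeping is routine and the rest is a diagram chase.
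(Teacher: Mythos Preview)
Your proposal is correct and follows essentially the same route as the paper: unitize to pass from $C_0(\Omega)$ to $C(S^2)$, check that $K_{-2}$ is unchanged because $K_{-2}(\mathbb{R})=0$, invoke Theorems~\ref{thm:extendSphereReal}/\ref{thm:extendSphereSelfDual} to extend along $\iota_1$, and restrict back. The only cosmetic difference is that the paper unitizes $C_0(\Omega^{[1]})$ to $C(\mathbb{D})$ and then records the factorization $C(S^2)\hookrightarrow C(\mathbb{D})\hookrightarrow C(S^1\times[-1,1])$, whereas you embed $C_0(\Omega^{[1]})$ directly as an ideal in $C(S^1\times[-1,1])$ and skip the intermediate disk; these packagings amount to the same argument.
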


\begin{proof}
The inclusion
$C_{0}(\Omega,\mathrm{id})\hookrightarrow\left(C_{0}(\Omega,\mathrm{id})\right)^{\sim}$
into the unitization is an isomorphism on $K_{-2},$ (essentially
by definition since $K_{-2}(\mathbb{R})=0$). Therefore it suffices
to prove the equivalent unital extension problem where we unitize
the two commutative $C^{*,\tau}$-algebras:
\[
\xymatrix{
C(\mathbb{D},\mathrm{id}) \ar@{-->}[dr]^-{\widetilde{\psi}}
\\
C(S^2,\mathrm{id})	\ar@{^{(}->}[u]^{\iota_2}  \ar[r]_-{\widetilde{\varphi}}
	&	{\prod \left(\mathbf{M}_{d_{n}}(\mathbb{C}),\tau_n \right) 
			\left/ \bigoplus \left(\mathbf{M}_{d_{n}}(\mathbb{C}),\tau_n \right)
			\right.
		}\\
}
\]
Here $\iota_{2}$ comes from the degree-one map $\mathbb{D}\rightarrow S^{2}$
that sends the origin to the north pool and the boundary circle to
the south pole. We have the factorization
\[
\xymatrix@C=1.2cm{
C(S^2) \ar[r]^-{\iota_2} \ar@/_0.6cm/[rr]_-{\iota_0}
	& C(\mathbb{D},\mathrm{id}) \ar[r]
		& C(S^1 \otimes [-1,1],\mathrm{id})
}
\]
and so this follows from Theorems~\ref{thm:extendSphereReal} and
\ref{thm:extendSphereSelfDual}.
\end{proof}

To prove our results for the torus geometry, we need to know about amalgamated products of
$C^{*,\tau}$-algebras.  For the sphere geometry, we are ready to give a proof.  That is,
we now prove Theorems~\ref{thm:SphereReal} and \ref{thm:SphereQuaternion}.

Given
\[
\varphi \colon C(S^2)
\rightarrow
\prod\left(\mathbf{M}_{d_{n}}(\mathbb{C}),\tau_{n}\right)\left/\bigoplus\left(\mathbf{M}_{d_{n}}(\mathbb{C}),\tau_{n}\right)\right.
\]
with trivial $K_2$ we have, as we saw in the proof of
Corollary~\ref{cor:OneVolcationOnOmega},
an extension to a $*$-$\tau$-homomorphims from $C(\mathbb{D})$.  This
we can lift to the product by the main result in \cite{LorSorensenDisk}.
This solves the lifting problem, at least when the $K$-theory allows it,
for the map from $C(S^2)$.  We leave to the reader the usual conversion of
Theorems~\ref{thm:SphereReal} and \ref{thm:SphereQuaternion} into
a lifting problem via generators and relations.

\section{Amalgamated products of $C^{*,\tau}$-algebras}

Define $\Upsilon_{A} \colon A\rightarrow A^{\mathrm{op}}$ to be the identity
map on the underlying set, so a $*$-anti-homomorphism. We have 
$\left(A^{\mathrm{op}}\right)^{\mathrm{op}}=A.$ 

\begin{lem}
Suppose $C,$ $A_{1}$ and $A_{2}$ are $C^{*}$-algebras,
$\theta_{1} \colon C\rightarrow A_{1}$ and $\theta_{2} \colon C\rightarrow A_{2}$
are  $*$-homomorphisms and consider the associated amalgamated product 
$A_{1}\ast_{C}A_{2}$ and canonical $*$-homomorphisms 
$\iota_{j} \colon A_{j}\rightarrow A_{1}\ast_{C}A_{2}.$
If $\varphi_{j} \colon A_{1}\rightarrow D$ and $\varphi_{2} \colon A_{2}\rightarrow D$
are $*$-anti-homomorphisms such that
$\varphi_{1}\circ\theta_{1}=\varphi_{2}\circ\theta_{2},$
then there is a unique $*$-anti-homomorphism $\Phi \colon A_{1}\ast_{C}A_{2}\rightarrow D$
such that $\Phi\circ\iota_{j}=\varphi_{j}.$
\end{lem}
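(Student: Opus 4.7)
The plan is to reduce this to the (already known) universal property of the amalgamated product with respect to $*$-homomorphisms, using the opposite algebra construction suggested by the definition of $\Upsilon_{(-)}$ that immediately precedes the lemma. The key observation is that a $*$-anti-homomorphism $\varphi \colon A \to D$ is the same datum as a $*$-homomorphism $A \to D^{\mathrm{op}}$, obtained by composing with $\Upsilon_D$. This encodes the slogan that anti-homomorphisms into $D$ are homomorphisms into $D^{\mathrm{op}}$.

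Concretely, set $\widetilde{\varphi}_j = \Upsilon_D \circ \varphi_j \colon A_j \to D^{\mathrm{op}}$. The composition of two $*$-anti-homomorphisms is a $*$-homomorphism, so each $\widetilde{\varphi}_j$ is a $*$-homomorphism. Applying $\Upsilon_D$ to both sides of the compatibility assumption $\varphi_1 \circ \theta_1 = \varphi_2 \circ \theta_2$ yields $\widetilde{\varphi}_1 \circ \theta_1 = \widetilde{\varphi}_2 \circ \theta_2$. The standard universal property of the amalgamated product in the category of $C^*$-algebras with $*$-homomorphisms then provides a unique $*$-homomorphism $\widetilde{\Phi} \colon A_1 *_C A_2 \to D^{\mathrm{op}}$ with $\widetilde{\Phi} \circ \iota_j = \widetilde{\varphi}_j$ for $j=1,2$.

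Finally, set $\Phi = \Upsilon_D^{-1} \circ \widetilde{\Phi} \colon A_1 *_C A_2 \to D$. Since $\Upsilon_D^{-1} \colon D^{\mathrm{op}} \to D$ is a $*$-anti-homomorphism and $\widetilde{\Phi}$ is a $*$-homomorphism, $\Phi$ is a $*$-anti-homomorphism, and one checks directly that $\Phi \circ \iota_j = \Upsilon_D^{-1} \circ \Upsilon_D \circ \varphi_j = \varphi_j$. For uniqueness, any $*$-anti-homomorphism $\Phi' \colon A_1 *_C A_2 \to D$ with $\Phi' \circ \iota_j = \varphi_j$ gives rise to the $*$-homomorphism $\Upsilon_D \circ \Phi' \colon A_1 *_C A_2 \to D^{\mathrm{op}}$ factoring the $\widetilde{\varphi}_j$, so by uniqueness of $\widetilde{\Phi}$ we recover $\Upsilon_D \circ \Phi' = \widetilde{\Phi}$ and hence $\Phi' = \Phi$.

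There is no real obstacle here; the proof is essentially a bookkeeping exercise once one recognizes the $A \rightsquigarrow A^{\mathrm{op}}$ duality. The only substantive point is that the passage $D \leftrightarrow D^{\mathrm{op}}$ preserves the $C^*$-structure (same involution, same norm), so that the universal property invoked in the middle paragraph applies without modification to the target $D^{\mathrm{op}}$.
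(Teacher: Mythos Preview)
Your proof is correct and follows essentially the same approach as the paper: both pass to $D^{\mathrm{op}}$ via $\Upsilon_D$, invoke the usual universal property of the amalgamated product to obtain a $*$-homomorphism into $D^{\mathrm{op}}$, and then compose back with $\Upsilon_{D^{\mathrm{op}}}$ (which is your $\Upsilon_D^{-1}$) to get the desired $*$-anti-homomorphism, with uniqueness handled identically.
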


\begin{proof}
Both $\Upsilon_{D}\circ\varphi_{1}$ and $\Upsilon_{D}\circ\varphi_{2}$
are $*$-homomorphisms, and
$\Upsilon_{D}\circ\varphi_{1}\circ\theta_{1}=\Upsilon_{D}\circ\varphi_{2}\circ\theta_{2}.$
There is a unique $*$-homomorphism
$\Psi \colon A_{1}\ast_{C}A_{2}\rightarrow D^{\mathrm{op}}$
such that $\Psi\circ\iota_{j}=\Upsilon_{D}\circ\varphi_{j}.$ Let
$\Phi=\Upsilon_{D^{\mathrm{op}}}\circ\Psi$.
This is a $*$-anti-homomorphism
and
\[
\Phi\circ\iota_{j}
=\Upsilon_{D^{\mathrm{op}}}\circ\Psi\circ\iota_{j}
=\Upsilon_{D^{\mathrm{op}}}\circ\Upsilon_{D}\circ\varphi_{j}
=\varphi_{j}.
\]
 If $\Phi^{\prime} \colon A_{1}\ast_{C}A_{2}\rightarrow D$ is also a $*$-anti-homomorphism
with $\Phi^{\prime}\circ\iota_{j}=\varphi_{j}$ then $\Upsilon_{D}\circ\Phi^{\prime}$
is a $*$-homomorphism and 
\[
\Upsilon_{D}\circ\Phi^{\prime}\circ\iota_{j}=\Upsilon_{D}\circ\varphi_{j},
\]
therefore $\Upsilon_{D}\circ\Phi^{\prime}=\Psi$ and so
\[
\Phi^{\prime}
=\mathrm{id}_{D}\circ\Phi^{\prime}
=\Upsilon_{D^{\mathrm{op}}}\circ\Upsilon_{D}\circ\Phi^{\prime}
=\Upsilon_{D^{\mathrm{op}}}\circ\Psi
=\Phi. \qedhere
\]
\end{proof}

What the lemma shows is that not only is $A_{1} \ast_{C} A_{2}$ universal for $*$-homomorphisms out of $A_1$ and $A_2$, it is also universal for anti-$*$-homomorphisms. 
We will use this fact to put a $\tau$-structure on the amalgamated product. 

\begin{thm}
\label{thm:push-outWithTau}
Suppose $\left(C,\tau_{0}\right),$ $\left(A_{1},\tau_{1}\right)$
and $\left(A_{1},\tau_{2}\right)$ are $C^{*,\tau}$-algebras and
$\theta_{1} \colon C\rightarrow A_{1}$ and $\theta_{2} \colon C\rightarrow A_{2}$
are $*$-$\tau$-homomorphisms. The $C^{*,\tau}$-algebra $A_{1}\ast_{C}A_{2}$
becomes a $C^{*,\tau}$-algebra with the unique operation $\tau$
making both $\iota_{j} \colon A_{j}\rightarrow A_{1}\ast_{C}A_{2}$ into
$*$-$\tau$-homomorphisms. Moreover, $\left(A_{1}\ast_{C}A_{2},\tau\right)$
is the amalgamated product of $\left(A_{1},\tau_{1}\right)$ and $\left(A_{1},\tau_{2}\right)$
over $\left(C,\tau_{0}\right)$.
\end{thm}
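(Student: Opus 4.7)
The plan is to apply the preceding lemma with target $D = A_{1}\ast_{C}A_{2}$ to the pair of $*$-anti-homomorphisms $\varphi_{j}:=\iota_{j}\circ\tau_{j}\colon A_{j}\rightarrow A_{1}\ast_{C}A_{2}$. First I would check the compatibility on $C$: since $\theta_{j}\circ\tau_{0}=\tau_{j}\circ\theta_{j}$ and $\iota_{1}\circ\theta_{1}=\iota_{2}\circ\theta_{2}$, a one-line computation gives $\varphi_{1}\circ\theta_{1}=\varphi_{2}\circ\theta_{2}$. The preceding lemma then produces a unique $*$-anti-homomorphism $\tau\colon A_{1}\ast_{C}A_{2}\rightarrow A_{1}\ast_{C}A_{2}$ with $\tau\circ\iota_{j}=\iota_{j}\circ\tau_{j}$, and this is the candidate for the $\tau$-structure.

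Next I would verify the remaining axioms of a $\tau$-operation. The properties of being $\mathbb{C}$-linear, anti-multiplicative, and satisfying $\tau(a^{*})=\tau(a)^{*}$ are built into the notion of $*$-anti-homomorphism. For the involutive property, I would observe that $\tau\circ\tau$ is a genuine $*$-homomorphism (composition of two $*$-anti-homomorphisms) from $A_{1}\ast_{C}A_{2}$ to itself satisfying $(\tau\circ\tau)\circ\iota_{j}=\iota_{j}\circ\tau_{j}^{2}=\iota_{j}=\mathrm{id}\circ\iota_{j}$; the ordinary universal property of the amalgamated product for $*$-homomorphisms then forces $\tau\circ\tau=\mathrm{id}$. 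Uniqueness of the $\tau$-structure follows immediately from the uniqueness clause in the preceding lemma: any other $\tau'$ making each $\iota_{j}$ into a $*$-$\tau$-homomorphism satisfies the same intertwining relations $\tau'\circ\iota_{j}=\iota_{j}\circ\tau_{j}$, hence coincides with $\tau$.

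Finally I would verify that $(A_{1}\ast_{C}A_{2},\tau)$ enjoys the universal property of the amalgamated product in the category of $C^{*,\tau}$-algebras. Given $(D,\sigma)$ and $*$-$\tau$-homomorphisms $\varphi_{j}\colon A_{j}\rightarrow D$ agreeing on $C$, the ordinary amalgamated product supplies a unique $*$-homomorphism $\Phi\colon A_{1}\ast_{C}A_{2}\rightarrow D$ with $\Phi\circ\iota_{j}=\varphi_{j}$. To see $\Phi$ is $\tau$-equivariant I would note that both $\Phi\circ\tau$ and $\sigma\circ\Phi$ are $*$-anti-homomorphisms into $D$, and they agree after precomposing with each $\iota_{j}$, since
\[
(\Phi\circ\tau)\circ\iota_{j}=\Phi\circ\iota_{j}\circ\tau_{j}=\varphi_{j}\circ\tau_{j}=\sigma\circ\varphi_{j}=(\sigma\circ\Phi)\circ\iota_{j}.
\]
The uniqueness clause of the preceding lemma, applied with target $D$, then yields $\Phi\circ\tau=\sigma\circ\Phi$, so $\Phi$ is a $*$-$\tau$-homomorphism.

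There is no serious obstacle here; the essence of the argument is that the preceding lemma upgrades $A_{1}\ast_{C}A_{2}$ to be universal for $*$-anti-homomorphisms as well as for $*$-homomorphisms, and that is exactly what is needed at each step. The only care required is bookkeeping about which maps are $*$-homomorphisms and which are $*$-anti-homomorphisms, so that the correct uniqueness statement is invoked to collapse each diagram.
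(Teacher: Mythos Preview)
Your proof is correct and follows the same strategy as the paper: construct the $\tau$-operation on $A_1\ast_C A_2$ by applying the preceding lemma to the anti-homomorphisms $\iota_j\circ\tau_j$, verify $\tau^2=\mathrm{id}$ via the ordinary universal property, and then check that the induced $\Phi$ is $\tau$-equivariant. The only difference is in that last step: the paper verifies $\Phi(w^\tau)=\Phi(w)^\tau$ by an explicit computation on alternating words in $\iota_1(A_1)$ and $\iota_2(A_2)$, whereas you invoke the uniqueness clause of the preceding lemma once more to conclude $\Phi\circ\tau=\sigma\circ\Phi$ --- your route is a bit cleaner and avoids the case analysis on word types.
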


\begin{proof}
We have anti-homomorphisms $T_{0} \colon C\rightarrow C$ and
$T_{j} \colon A_{j}\rightarrow A_{j}$ defined by
$T_{0}(c)=c^{\tau_{0}}$ and $T_{j}(a)=a^{\tau_{j}}$.
The statement that $\theta_{j}$ is a $*$-$\tau$-homomorphism translates
to $\theta_{j}\circ T_{0}=T_{j}\circ\theta_{j}$. 
So we have 
\[
	\iota_1 \circ T_1 \circ \theta_1 = \iota_1 \circ \theta_1 \circ T_0 = \iota_2 \circ \theta_2 \circ T_0 = \iota_2 \circ T_2 \circ \theta_2.
\]
Therefore there is a unique $*$-anti-homomorphism
$T \colon A_{1}\ast_{C}A_{2}\rightarrow A_{1}\ast_{C}A_{2}$
such that $T(\iota_{j}(a))=\iota_{j}(a^{\tau_{j}})$. Certainly $T\circ T$
is a $*$-homomorphisms, and since it fixes $\iota_{j}(a)$ it is
the identity map. We thus can make $A_{1}\ast_{C}A_{2}$ into a
$C^{*,\tau}$-algebra by $x^{\tau}=T(x)$. For example,
\[
\left(\iota_{1}(a)\iota_{2}(b)\iota_{1}(c)\right)^{\tau}
=\iota_{1}(c^{\tau_{1}})\iota_{2}(b^{\tau_{2}})\iota_{1}(a^{\tau_{1}}).
\]

If we are given $\varphi_{j} \colon A_{j}\rightarrow B$, two $*$-$\tau$-homomorphisms
such that $\varphi_{1}\circ\theta_{1}=\varphi_{2}\circ\theta_{2}$,
then there is a unique $*$-homomorphism $\Phi \colon A_{1}\ast_{C}A_{2}\rightarrow B$
such that $\Phi(\iota_{j}(a))=\varphi_{j}(a)$ for all $a$ in $A_{j}$.
To finish, we must show that $\Phi$ is actually a $*$-$\tau$-homomorphism.
Given a product 
\[
w=\iota_{1}(a_{1})\iota_{2}(b_{1})\iota_{1}(a_{2})\iota_{2}(b_{2})\cdots\iota_{1}(a_{n})\iota_{2}(b_{n})\]
we find
\begin{align*}
\Phi(w^{\tau}) & =\Phi\left(\iota_{2}(b_{n}^{\tau_{2}})\iota_{1}(a_{n}^{\tau_{1}})\cdots\iota_{2}(b_{1}^{\tau_{2}})\iota_{1}(a_{1}^{\tau_{1}})\right)\\
 & =\varphi_{2}(b_{n}^{\tau_{2}})\varphi_{1}(a_{n}^{\tau_{1}})\cdots\varphi_{2}(b_{1}^{\tau_{2}})\varphi_{1}(a_{1}^{\tau_{1}})\\
 & =\varphi_{2}(b_{n})^{\tau}\varphi_{1}(a_{n})^{\tau}\cdots\varphi_{2}(b_{1})^{\tau}\varphi_{1}(a_{1}^{\tau}))\\
 & =\left(\varphi_{1}(a_{1})\varphi_{2}(b_{1})\cdots\varphi_{1}(a_{n})\varphi_{2}(b_{n})\right)^{\tau}\\
 & =\left(\Phi\left(\iota_{1}(a_{1})\iota_{2}(b_{1})\cdots\iota_{1}(a_{n})\iota_{2}(b_{n})\right)\right)^{\tau}\\
 & =\left(\Phi\left(w\right)\right)^{\tau}.
\end{align*}
For the other three types of words (ending in $\iota_{1}(A_{1})$
or beginning in $\iota_{2}(A_{2})$) we also find $\Phi(w^{\tau})=\Phi(w)^{\tau}$.
Since $\Phi$ is linear and continuous, we conclude $\Phi(x^{\tau})=\Phi(x)^{\tau}$
for all $x$ in $A_{1}\ast_{C}A_{2}$.
\end{proof}

We can talk about push-out diagrams instead of amalgamated products.
If $C$ is large relative to $A_{1}$ or $A_{2}$ this tends to be
a more fitting description. What Theorem~\ref{thm:push-outWithTau}
shows is that when a diagram
\[
\xymatrix@R=0.5cm@C=0.45cm{
	& D
\\
A	\ar[ru]
	&	& B, \ar[lu]
\\
	&C \ar[ru] \ar[lu]
}
\]
is a diagram
of $*$-$\tau$-homomorphisms and $C^{*,\tau}$-algebras, if it is
a push-out in the category of $C^{*}$-algebras then it is a push-out
in the category of $C^{*,\tau}$-algebras.

\begin{defn}
We say that a $*$-$\tau$-homomorphism is \emph{proper} if it is
proper as a $*$-homomorphism. 
\end{defn}

Recall (or see \cite{ELP-pushBusby}) that a $*$-homomorphism $\varphi \colon A\rightarrow B$
is said to be proper if for some (equivalently every) approximate
unit $e_{\lambda}$ of $A$ the image $\varphi(e_{\lambda})$ is an
approximate unit for $B$. Using Cohen factorization we find that this is equivalent to the condition $B=\varphi(A)B$, see \cite{Pedersen-Factorization}.  

\begin{thm} \label{thm:properGivespush-out}
If a diagram of extensions
\[
\xymatrix{
0 \ar[r]
	& A_1 \ar[r] 
		& C_1 \ar[r] 
			& B \ar[r]
				& 0 \\
0 \ar[r]
	& A \ar[r] \ar[u] ^{\alpha}
		& C \ar[r] \ar[u]
			& B \ar[r] \ar@{=}[u]
				& 0 \\
}
\]
of $C^{*,\tau}$-algebras
is given, with $\alpha$ proper, then the left square is a push-out. 
\end{thm}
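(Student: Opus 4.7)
The plan is to reduce to the case of ordinary $C^*$-algebras and then construct the universal map out of $C_1$ explicitly, with properness of $\alpha$ entering in the form of Cohen factorization.

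First, by the remark following Theorem~\ref{thm:push-outWithTau}, any commutative square of $*$-$\tau$-homomorphisms between $C^{*,\tau}$-algebras that is a push-out in the category of $C^*$-algebras is automatically a push-out in the category of $C^{*,\tau}$-algebras. So it suffices to treat the left square as a square of ordinary $C^*$-algebras and verify the universal property there.

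So denote the middle vertical map by $\gamma \colon C \to C_1$ and the inclusion by $\iota_A \colon A \to C$, and fix $*$-homomorphisms $\varphi_1 \colon A_1 \to D$ and $\varphi_2 \colon C \to D$ satisfying the compatibility $\varphi_1 \circ \alpha = \varphi_2 \circ \iota_A$. For $c_1 \in C_1$, lift the image of $c_1$ in $B$ to some $c \in C$; then $a_1 := c_1 - \gamma(c) \in A_1$, and provisionally set
\[
\Phi(c_1) = \varphi_1(a_1) + \varphi_2(c).
\]
Independence from the choice of lift is immediate: any two lifts differ by an element of $A$, and compatibility absorbs the discrepancy. Uniqueness of $\Phi$ is also clear, since $C_1 = A_1 + \gamma(C)$ and the values of $\Phi$ are forced on $A_1$ and on $\gamma(C)$.

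The main obstacle is showing that $\Phi$ so defined is actually a $*$-homomorphism, with multiplicativity being the delicate point ($*$-preservation and boundedness then follow by standard arguments). Expanding $\Phi(c_1 c_1')$ using decompositions $c_1 = a_1 + \gamma(c)$ and $c_1' = a_1' + \gamma(c')$, one encounters cross-terms such as $\gamma(c) a_1'$ lying in the ideal $A_1 \subseteq C_1$. Multiplicativity reduces to the identity $\varphi_1(\gamma(c) a_1') = \varphi_2(c) \varphi_1(a_1')$ together with its mirror. This is exactly where properness of $\alpha$ is essential: it implies that $\alpha(A) A_1$ is dense in $A_1$, and by Cohen factorization in fact $A_1 = \alpha(A) A_1$ as sets. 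Writing $a_1' = \alpha(a) a_1''$ and using $\gamma \iota_A = \alpha$ together with the fact that $A$ is an ideal in $C$, one rewrites $\gamma(c) a_1' = \alpha(ca) a_1''$, at which point the compatibility hypothesis applies directly to yield the desired identity. The analogous factorization $A_1 = A_1 \alpha(A)$ handles the remaining cross-term, completing the construction.
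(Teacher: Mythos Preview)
Your proof is correct. The reduction to ordinary $C^*$-algebras via the remark after Theorem~\ref{thm:push-outWithTau} is exactly what the paper does. Where you diverge is that the paper then simply cites the $C^*$-algebra result as \cite[Corollary~4.3]{ELP-pushBusby}, whereas you reprove it from scratch: you set up the candidate map $\Phi$ on $C_1 = A_1 + \gamma(C)$, observe that well-definedness and uniqueness are forced by the compatibility condition, and then isolate the cross-term identity $\varphi_1(\gamma(c)a_1') = \varphi_2(c)\varphi_1(a_1')$ as the only nontrivial point, resolving it with Cohen factorization $A_1 = \alpha(A)A_1$ supplied by properness. This is precisely the mechanism behind the cited result, so your argument is more self-contained but also longer; the paper's version trades that detail for a one-line citation. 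One minor remark: your phrase ``boundedness then follows by standard arguments'' is fine here, since once $\Phi$ is an everywhere-defined $*$-algebra homomorphism between $C^*$-algebras it is automatically contractive, but it would not hurt to say that explicitly.
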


\begin{proof}
This is an immediate corollary of this statement in the category
of $C^{*}$-algebras \cite[Corollary 4.3]{ELP-pushBusby}
and Theorem~\ref{thm:push-outWithTau}.
\end{proof}

\section{The sphere, torus and other 2D situations}

Suppose $X$ is a two-dimensional, finite CW complex. Assume the $2$-cells
are given as unit squares, and on that unit square a lattice of
size $\frac{1}{m}$, at the center point of every square created by
the lattice grid we insert a circle and so create a space $X^{[m]}$.
Let $\Gamma_{m}$ denote the closed subset of $X$ corresponding to
replacing each $2$-cell with the 1-D mesh. Consult figure \ref{fig:XandX[n]}.

\begin{figure}
\includegraphics[bb=00bp 00bp 455bp 160bp,clip,scale=0.8]{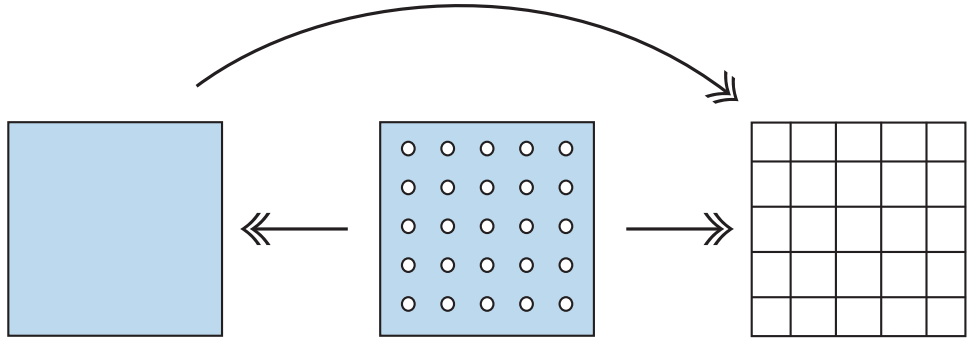}
\caption{How we treate each 2-cell in a 2-dimensional CW complex $X$.
\label{fig:XandX[n]} }
\end{figure}

\begin{thm} \label{thm:extendAnyCW}
Suppose $X$ is a finite two-dimensional CW complex and that
\[
\varphi \colon C(X,\mathrm{id})
\rightarrow
\prod\left(\mathbf{M}_{d_{n}}(\mathbb{C}),\tau_{n}\right)\left/\bigoplus\left(\mathbf{M}_{d_{n}}(\mathbb{C}),\tau_{n}\right)\right.
\]
is a unital $*$-$\tau$-homomorphism with $\tau_{n}=\mathrm{T}$ for all $n$ or $\tau_{n}=\sharp$ and $d_n$ even for all $n$.
If $K_{-2}(\varphi)=0$ then there exists a $*$-$\tau$-homomorphism $\psi$ so that
\[
\xymatrix{
C(X^{[n]},\mathrm{id}) \ar@{-->}[dr]^-{\psi}
\\
C(X,\mathrm{id})	\ar@{^{(}->}[u]^{\iota}  \ar[r]_-{\varphi}
	&	{\prod \left(\mathbf{M}_{d_{n}}(\mathbb{C}),\tau_n \right) 
			\left/ \bigoplus \left(\mathbf{M}_{d_{n}}(\mathbb{C}),\tau_n \right),
			\right.
		}\\
}
\]
commutes.
\end{thm}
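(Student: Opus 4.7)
The plan is to poke the holes one small square at a time, extending the map at each step by combining Theorem~\ref{thm:properGivespush-out} with Corollary~\ref{cor:OneVolcationOnOmega}. Enumerate the open small squares of $X$ as $\Omega_{1},\dots,\Omega_{N}$, and for $0 \leq k \leq N$ let $X(k)$ be the space obtained from $X$ by replacing the center of each of $\Omega_{1},\dots,\Omega_{k}$ with a circle, so that $X(0) = X$ and $X(N) = X^{[m]}$. The collapsing maps give a tower of $*$-$\tau$-inclusions $C(X) \hookrightarrow C(X(1)) \hookrightarrow \cdots \hookrightarrow C(X^{[m]})$, and I will inductively construct compatible $*$-$\tau$-homomorphisms $\varphi_{k} \colon C(X(k),\mathrm{id}) \to Q$ with $\varphi_{0} = \varphi$, where $Q$ denotes the target quotient algebra; the required $\psi$ is then $\varphi_{N}$.

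For the inductive step, the functions vanishing off $\Omega_{k}$ in $C(X(k-1))$ form an ideal isomorphic to $C_{0}(\Omega_{k})$ with quotient $C(Y)$ for $Y = X(k-1) \setminus \Omega_{k}$, and the analogous statement for $X(k)$ gives an ideal $C_{0}(\Omega_{k}^{[1]})$ with the same quotient, since $Y = X(k) \setminus \Omega_{k}^{[1]}$ also. Collapsing the inserted circle identifies these two extensions via a proper $*$-$\tau$-homomorphism $C_{0}(\Omega_{k}) \hookrightarrow C_{0}(\Omega_{k}^{[1]})$ on the ideals and the identity on $C(Y)$. Theorem~\ref{thm:properGivespush-out} therefore says the corresponding square is a push-out in the category of $C^{*,\tau}$-algebras, so to extend $\varphi_{k-1}$ to $\varphi_{k}$ it is enough to produce a $*$-$\tau$-homomorphism $\widetilde{\psi}_{k} \colon C_{0}(\Omega_{k}^{[1]}) \to Q$ that restricts along the inclusion to $\varphi_{k-1}|_{C_{0}(\Omega_{k})}$.

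Since the inclusion $C_{0}(\Omega_{k}) \hookrightarrow C(X(k-1))$ factors through $C(X)$ and $\varphi_{k-1}|_{C(X)} = \varphi$, we have $\varphi_{k-1}|_{C_{0}(\Omega_{k})} = \varphi|_{C_{0}(\Omega_{k})}$, whose $K_{-2}$ class is the composite $K_{-2}(C_{0}(\Omega_{k})) \to K_{-2}(C(X)) \xrightarrow{K_{-2}(\varphi)} K_{-2}(Q)$ and so vanishes by hypothesis. Corollary~\ref{cor:OneVolcationOnOmega} then delivers $\widetilde{\psi}_{k}$, closing the induction. The main subtle point is the observation that, at every step, the unpoked cell $\Omega_{k}$ still looks like an unaltered open disk inside $X(k-1)$, so the obstruction being killed is always the one carried by the original $\varphi$; no $K$-theoretic obstruction accumulates along the induction, and compatibility across cells (including orthogonality of the newly inserted circle data in $Q$) is automatic from iterating the push-out property rather than something that must be arranged by hand.
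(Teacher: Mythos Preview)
Your proof is correct and follows essentially the same route as the paper: both argue by induction, poking one hole at a time, and at each step invoke Theorem~\ref{thm:properGivespush-out} to recognize a push-out square and then Corollary~\ref{cor:OneVolcationOnOmega} to supply the needed extension on the ideal $C_{0}(\Omega)$. Your write-up is slightly more explicit than the paper's about why the $K_{-2}$ hypothesis of Corollary~\ref{cor:OneVolcationOnOmega} is satisfied at each step (namely that the restriction to the unpoked cell factors through the original $\varphi$), which is a welcome clarification.
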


\begin{proof}
Suppose $Y$ is an open set in $X$ and that $Y$ is homeomorphic to the open unit disk $\Omega$. 
Let $Z$ be the space we get from $X$ if we remove the closed set in $Y$ that corresponds to the open disk of radius $\tfrac{1}{2}$ at the center of $\Omega$. 
Denote by $W$ the open disc with a hole in $Z$, with notational abuse $W = Z \cap Y$.
Map $Z$ onto $X$ by a continuous function that fixes points in $X \setminus W$ and inside $W$ it corresponds to the surjection 
\[
(W = )\left\{ (x,y)\left|\strut\tfrac{1}{2}\leq x^{2}+y^{2}< 1 \right.\right\} 
\twoheadrightarrow
\left\{ (x,y)\left|\strut x^{2}+y^{2}<1\right.\right\} 
(=\Omega),
\]
that shrinks the hole in middle to a point. 
Let $\iota \colon C(X,\mathrm{id}) \hookrightarrow C(Z, \mathrm{id})$ denote the induced inclusion. 

We can complete the proof by induction, if we can show the following extension property 
\[
\xymatrix{
C(Z,\mathrm{id}) \ar@{-->}[dr]^-{\psi}
\\
C(X,\mathrm{id})	\ar@{^{(}->}[u]^{\iota} \ar[r]_-{\varphi}
	&	{\prod \left(\mathbf{M}_{d_{n}}(\mathbb{C}),\tau_n \right) 
			\left/ \bigoplus \left(\mathbf{M}_{d_{n}}(\mathbb{C}),\tau_n \right)
			\right.,
		}\\
}
\]
where $\varphi$ is assumed unital and $\psi$ is required to be unital.

By using homeomorphisms we can replace $W$ by $\Omega^{[1]}$ and $Y$ by $\Omega$. 
Therefore we get a commutative diagram of $C^{*,\tau}$-algebras 
\[
\xymatrix{
0 \ar[r]
	& C_0(\Omega^{[1]},\mathrm{id}) \ar[r] 
		& C(Z,\mathrm{id}) \ar[r] 
			& C(X\setminus Y,\mathrm{id}) \ar[r]
				& 0 \\
0 \ar[r]
	& C_0(\Omega,\mathrm{id}) \ar[r] \ar[u]^{\alpha}
		& C(X,\mathrm{id}) \ar[r] \ar[u]
			& C(X\setminus Y,\mathrm{id}) \ar[r] \ar@{=}[u]
				& 0, \\
}
\]
with exact rows. 
The map $\Omega^{[1]} \rightarrow \Omega$ inducing $\alpha$ is proper, hence $\alpha$ is proper and by Theorem~\ref{thm:properGivespush-out} the left square is a push-out.
By Corollary~\ref{cor:OneVolcationOnOmega} we can find a $*$-$\tau$-homomorphism $\lambda$ such that 
\[
\xymatrix{
	C_0(\Omega^{[1]},\mathrm{id}) \ar@/^1em/[drr]^-{\lambda} & & \\
	C_0(\Omega,\mathrm{id}) \ar[r] \ar[u]^{\alpha} & C(X,\mathrm{id}) \ar[r]_-{\phi} & {\prod \left(\mathbf{M}_{d_{n}}(\mathbb{C}),\tau_n \right) \left/ \bigoplus \left(\mathbf{M}_{d_{n}}(\mathbb{C}),\tau_n \right) \right.,}
}
\]
commutes. 
By the universal property of push-outs, $\lambda$ and $\phi$ combine to define a $*$-homomorphism 
\[
	\psi \colon C(Z,\mathrm{id}) \to {\prod \left(\mathbf{M}_{d_{n}}(\mathbb{C}),\tau_n \right) \left/ \bigoplus \left(\mathbf{M}_{d_{n}}(\mathbb{C}),\tau_n \right) \right.,}
\]
that extends $\phi$.  
As the unit in $C(Y)$ comes from the unit in $C(X)$ it is automatic that $\psi$ is unital.
\end{proof}

\begin{thm} \label{thm:C(X)isRealMSP}
Suppose $X$ is a finite two-dimensional CW complex.
Then for any unital $*$-$\tau$-homomorphism
\[
\varphi \colon C(X,\mathrm{id})\rightarrow\prod\left(\mathbf{M}_{d_{n}}(\mathbb{C}),\tau_{n}\right)\left/\bigoplus\left(\mathbf{M}_{d_{n}}(\mathbb{C}),\tau_{n}\right)\right.,
\]
with $\tau_{n}=\mathrm{T}$ for all $n$ or $\tau_{n}=\sharp$ and $d_n$ even for
all $n$.
If $K_{-2}(\phi) = 0$ then there exists a unital $*$-$\tau$-homomorphism $\psi$
making the following diagram commute: 
\[
\xymatrix{
	&	{\prod \left(\mathbf{M}_{d_{n}}(\mathbb{C}),\tau_n \right) }
	\ar[d] ^{\pi}
\\
C(X,\mathrm{id})	 \ar[r] _(0.25){\varphi} \ar@{-->}[ru] ^{\psi}
	&	{\prod \left(\mathbf{M}_{d_{n}}(\mathbb{C}),\tau_n \right) 
			\left/ \bigoplus \left(\mathbf{M}_{d_{n}}(\mathbb{C}),\tau_n \right)
			\right.}.
\\
}
\]
\end{thm}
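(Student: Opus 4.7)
The plan is to combine Theorem~\ref{thm:extendAnyCW} (which extends $\varphi$ to the ``holey'' space $X^{[m]}$) with the push-out machinery of Section~5 and the disk-lifting result from \cite{LorSorensenDisk}. By Theorem~\ref{thm:extendAnyCW} applied with $m=n$, there is a unital $*$-$\tau$-homomorphism $\tilde\psi\colon C(X^{[n]},\mathrm{id})\to \prod(\mathbf{M}_{d_n}(\mathbb{C}),\tau_n)/\bigoplus(\mathbf{M}_{d_n}(\mathbb{C}),\tau_n)$ with $\tilde\psi\circ\iota = \varphi$, so it suffices to lift $\tilde\psi$ through $\pi$.

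Next, I would decompose $X^{[n]}$ topologically: it is the union of the 1-skeleton $\Gamma_n$ (a finite graph) with the closures of the holey 2-cells, glued along the mesh lines. Each holey 2-cell is a closed square with finitely many open subdisks removed, and each of these is itself a closed disk $\mathbb{D}$ with a further inclusion for each of its inner boundary circles. Translating this picture into the $C^{*,\tau}$-algebra category via Theorems~\ref{thm:push-outWithTau} and \ref{thm:properGivespush-out}, one obtains $C(X^{[n]},\mathrm{id})$ as an iterated push-out whose building blocks are $C(\Gamma_n,\mathrm{id})$, finitely many copies of $C(\mathbb{D},\mathrm{id})$, and $C^{*,\tau}$-algebras of arcs and circles (the boundaries along which the gluings take place). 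All these gluings are along closed inclusions inducing proper $*$-$\tau$-homomorphisms, so Theorem~\ref{thm:properGivespush-out} applies.

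Then I would lift each building block through $\pi$: graphs and arcs/circles lift because one-dimensional commutative $C^{*,\tau}$-algebras are semiprojective (lifting a commuting collection of unitaries and self-adjoint contractions on a graph one vertex at a time), and each copy of $C(\mathbb{D},\mathrm{id})$ lifts through $\pi$ by the main result of \cite{LorSorensenDisk}. The order of operations is important: first lift $\tilde\psi|_{C(\Gamma_n,\mathrm{id})}$ to $\prod(\mathbf{M}_{d_n}(\mathbb{C}),\tau_n)$, and then, for each holey 2-cell, extend the lift across the cell by applying the disk lifting theorem to the composition with a chosen extension to $C(\mathbb{D},\mathrm{id})$ of the boundary data already fixed by the skeleton lift. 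Finally, the universal property of push-outs in the $C^{*,\tau}$-algebra category (Theorem~\ref{thm:push-outWithTau}) assembles these compatible local lifts into a single unital $*$-$\tau$-homomorphism $\psi\colon C(X,\mathrm{id})\to \prod(\mathbf{M}_{d_n}(\mathbb{C}),\tau_n)$.

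The hard part will be step three, namely arranging the local lifts to match on the shared 1-skeleton $\Gamma_n$: one must choose the disk-lift for each holey 2-cell so that on $\Gamma_n$ it agrees, not only in the quotient but on the nose in $\prod(\mathbf{M}_{d_n}(\mathbb{C}),\tau_n)$, with the previously chosen graph-lift. This forces the use of a relative lifting argument (lifting one cell at a time, using the fact that the disk lifting theorem of \cite{LorSorensenDisk} is strong enough to accommodate fixed boundary values up to a controlled perturbation), and it is here that the $\tau$-equivariant push-out and proper-extension formalism of Section~5 is essential.
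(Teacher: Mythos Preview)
Your approach diverges from the paper's in a way that introduces a real difficulty you have not resolved. The paper's proof is much shorter: after extending $\varphi$ to $C(X^{[m]},\mathrm{id})$ via Theorem~\ref{thm:extendAnyCW}, it does \emph{not} attempt to lift $C(X^{[m]})$ at all. Instead it observes that $X^{[m]}$ deformation retracts onto the one-dimensional mesh $\Gamma_m$, so the extended map restricts to a $*$-$\tau$-homomorphism out of $C(\Gamma_m,\mathrm{id})$; for $m$ large this restriction is close to $\varphi$ on a fixed finite generating set of $C(X)$. Since $C(\Gamma_m,\mathrm{id})$ is semiprojective (from \cite{LorSorensenDisk}), that restricted map lifts exactly to the product. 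This produces, for every $\epsilon>0$, an $\epsilon$-approximate lift of $\varphi$, and a standard intertwining argument (as in \cite{EilersLoringContingenciesStableRelations}) upgrades these to an exact lift. The whole point of poking holes is precisely to make the retraction to a graph possible; once you retract, the two-dimensional lifting problem disappears.

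Your plan instead tries to lift $C(X^{[n]})$ directly by decomposing it into pieces and gluing lifts. Two issues. First, a holey $2$-cell (a square with an open disk removed) is an annulus, not a disk, so the decomposition you describe is not quite right; this is repairable since $C(S^1\times[0,1],\mathrm{id})$ is itself semiprojective, but it signals that you have not used the geometry of $X^{[m]}$ as intended. Second, and more seriously, the ``hard part'' you flag is a genuine gap: assembling local lifts via the push-out universal property requires the lifts to agree \emph{exactly} on the overlap in $\prod(\mathbf{M}_{d_n}(\mathbb{C}),\tau_n)$, not merely in the quotient. Neither the disk theorem of \cite{LorSorensenDisk} nor semiprojectivity of $C(\Gamma_n)$ gives you a lift with prescribed boundary values; you would need a relative (or ``conditional'') semiprojectivity statement that is not established in the paper or its references. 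The paper sidesteps this entirely by collapsing to a graph and paying the price of only an approximate lift, which the intertwining argument then repairs. Your fixed choice $m=n$ also suggests you missed that $m$ must be taken large to make the retraction close to the identity on generators.
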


\begin{proof}
Let $\mathcal{F}$ be a finite generating set for $C(X)$.
By an intertwining argument as in the proof of Theorem~3.1 in \cite{EilersLoringContingenciesStableRelations} we can show it suffices to find for each $\epsilon>0$ a $\psi$ so that $\left\Vert \psi(f)-\pi\circ\varphi(f)\right\Vert <\epsilon$ for all $f$ in $\mathcal{F}$.
Using Theorem \ref{thm:extendAnyCW} we can extend $\varphi$ to $C(X^{[m]},\mathrm{id})$ for large $m$ and then by retracting $X^{[m]}$ to $\Gamma_{n}$ we get that $\varphi$ approximately factors through $C(\Gamma_{n},\mathrm{id})$.
A map that factors through $C(\Gamma_{n},\mathrm{id})$ will lift exactly, by the semiprojectitity of $C(\Gamma_{n},\mathrm{id})$ which was established in our previous paper, \cite{LorSorensenDisk}.
\end{proof}

If we can describe $C(X)$ by generators and relations, we get corollaries
about approximation of tuples of real or self-dual matrices. Two spaces
for which this works well are the familiar sphere and torus. 

We have thus proven the two remaining main theorems, Theorems \ref{thm:torusReal} and \ref{thm:torusQuaternion}.

\section{\label{sec:-WannierSpread} Controlling maximum Wannier spread}

Commuting sets of Hermitian matrices can be simultaneously diagonalized
by a unitary matrix. Thus our main results have equivalent formulations
in terms of simultaneous approximate diagonalization. Simultateous
approximate diaganalization arises in signal processing \cite{cardosoSimultanDiagn}
and the resulting algorithms have been unitilized in first principals
molecular dymamics \cite{gygi2003computation}. 

There are many ways to measure the failure of a set of matrices to
be diagonal. Essentially every unitarly invariant norm of the matrices
gives a means to measure this. If the matrices can be interpreted
as position operators, one can instead ask for ``exponential localization''
which really only makes sense if we are considering a class of finite models
of increasing lattice size. We bring this up because it is probably
the most physically relevant measure. Naturally, it is hard to even
define, much less compute numerically. Within the choices of norms,
the operator norm would be the most physically relevant, while something
like the Frobenius norm would be the easiest to incorporate into an efficient
algorithm.

In condensed matter physics, one often looks for a basis for an energy
band that is well localized. When computed numercially, generally
one attempts to minimize the \emph{total} Wannier spread, which corresponds
to minimizing off-diagonal parts in the Frobenius norm. We consider
theoretical bounds on minimizing the \emph{maximum }Wannier spread.

Given $X_{1},\dots,X_{d}$ Hermitian matrices in $\mathbf{M}_{n}(\mathbb{C})$
and a unit vector $\mathbf{b}$, the \emph{Wannier spread of} $\mathbf{b}$
with respect to the set $\mathcal{X}$ of these of $X_{r}$ we define
as
\[
\sigma_{\mathcal{X}}^{2}(\mathbf{b})
=\sum_{r=1}^{d}\left\langle X_{r}^{2}\mathbf{b},\mathbf{b}\right\rangle -\left\langle X_{r}\mathbf{b},\mathbf{b}\right\rangle ^{2}.
\]
Given an orthonormal subset $\mathcal{B}=\{\mathbf{b}_{1},\dots,\mathbf{b}_{k}\}$
of $\mathbb{C}^{n}$ we define its \emph{total Wannier spread} with
respect to $\mathcal{X}$ as
\[
\sum_{j}\sigma_{\mathcal{X}}^{2}(\mathbf{b}_{j}),
\]
and its \emph{maximum Wannier spread }with resect to $\mathcal{X}$
as
\[
\mu_{\mathcal{X}}(\mathcal{B})=\max_{j}\sigma_{\mathcal{X}}^{2}(\mathbf{b}_{j}).
\]

For $\mathcal{X}=\{X_{1},\dots,X_{d}\}$ and $\mathcal{Y}=\{Y_{1},\dots,Y_{d}\}$,
both sets of Hermitian matices on $\mathbb{C}^{n}$, define
\[
\left\Vert \mathcal{X}\right\Vert =\max_{r}\left\Vert X_{r}\right\Vert,  
\]
and
\[
\mathrm{dist}\left(\mathcal{X},\mathcal{Y}\right)=\max_{r}\left\Vert X_{r}-Y_{r}\right\Vert .
\]

\begin{lem}
\label{lem:continuityOfSpread} 
Suppose $\mathcal{X}=\{X_{1},\dots,X_{d}\}$
and $\mathcal{Y}=\{Y_{1},\dots,Y_{d}\}$ are sets of Hermitian matices
on $\mathbb{C}^{n}$ with $\left\Vert \mathcal{X}\right\Vert \leq1$
and $\left\Vert \mathcal{Y}\right\Vert \leq1$. For any orthonormal
subset $\mathcal{B}=\{\mathbf{b}_{1},\dots,\mathbf{b}_{k}\}$ of $\mathbb{C}^{n}$,
\[
\mu_{\mathcal{X}}(\mathcal{B})\leq\mu_{\mathcal{Y}}(\mathcal{B})+4d \cdot \mathrm{dist}\left(\mathcal{X},\mathcal{Y}\right).
\]
\end{lem}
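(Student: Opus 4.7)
The plan is to reduce the claim to a pointwise inequality: for every unit vector $\mathbf{b}$, I will show $\sigma_{\mathcal{X}}^{2}(\mathbf{b}) \leq \sigma_{\mathcal{Y}}^{2}(\mathbf{b}) + 4d\cdot\mathrm{dist}(\mathcal{X},\mathcal{Y})$. Taking the maximum over $\mathbf{b}\in\mathcal{B}$ then immediately yields the bound on $\mu_{\mathcal{X}}(\mathcal{B})$, since the additive constant $4d\cdot\mathrm{dist}(\mathcal{X},\mathcal{Y})$ is independent of $\mathbf{b}$.

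For the pointwise bound, I will work term-by-term in the sum defining $\sigma^{2}$. Setting $\epsilon = \mathrm{dist}(\mathcal{X},\mathcal{Y})$, for each index $r$ I need to estimate both $|\langle X_{r}^{2}\mathbf{b},\mathbf{b}\rangle - \langle Y_{r}^{2}\mathbf{b},\mathbf{b}\rangle|$ and $|\langle X_{r}\mathbf{b},\mathbf{b}\rangle^{2} - \langle Y_{r}\mathbf{b},\mathbf{b}\rangle^{2}|$. The first is handled by writing $X_{r}^{2}-Y_{r}^{2} = X_{r}(X_{r}-Y_{r}) + (X_{r}-Y_{r})Y_{r}$, using submultiplicativity of the operator norm, and invoking $\|X_{r}\|,\|Y_{r}\|\leq 1$ to get a bound of $2\epsilon$. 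The second is handled by factoring a difference of squares as $(\alpha-\beta)(\alpha+\beta)$, bounding $|\alpha-\beta|$ by $\|X_{r}-Y_{r}\|\leq\epsilon$ (since $\mathbf{b}$ is a unit vector) and $|\alpha+\beta|$ by $\|X_{r}\|+\|Y_{r}\|\leq 2$, which again gives $2\epsilon$. Adding these and summing over $r=1,\dots,d$ gives the desired $4d\epsilon$.

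There is no real obstacle here; the only thing to watch is the correct handling of the $\langle X_{r}\mathbf{b},\mathbf{b}\rangle^{2}$ term, since a naive estimate using $\|X_{r}^{2}-Y_{r}^{2}\|$ twice would miss that the relevant quantities are expectations rather than operators. The factorization $(\alpha-\beta)(\alpha+\beta)$ with the bounds $|\langle X_{r}\mathbf{b},\mathbf{b}\rangle|\leq\|X_{r}\|\leq 1$ (and similarly for $Y_{r}$) is the key observation that yields the clean constant $4d$.
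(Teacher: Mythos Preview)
Your proposal is correct and follows essentially the same approach as the paper: the paper also bounds $|\langle X_r^2\mathbf{b},\mathbf{b}\rangle-\langle Y_r^2\mathbf{b},\mathbf{b}\rangle|\le 2\|X_r-Y_r\|$ and $|\langle X_r\mathbf{b},\mathbf{b}\rangle^2-\langle Y_r\mathbf{b},\mathbf{b}\rangle^2|\le 2\|X_r-Y_r\|$ for each unit vector $\mathbf{b}$, then sums over $r$ and takes the maximum over $j$. The only cosmetic difference is that the paper writes the second bound as $2|\langle X_r\mathbf{b},\mathbf{b}\rangle-\langle Y_r\mathbf{b},\mathbf{b}\rangle|$ rather than explicitly displaying the factor $|\alpha+\beta|\le 2$.
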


\begin{proof}
For any unit vector $\mathbf{b}$ we find
\[
\left|\left\langle X_{r}^{2}\mathbf{b},\mathbf{b}\right\rangle -\left\langle Y_{r}^{2}\mathbf{b},\mathbf{b}\right\rangle \right|\leq\left\Vert X_{r}^{2}-Y_{r}^{2}\right\Vert \leq2\left\Vert X_{r}-Y_{r}\right\Vert, 
\]
and 
\[
\left|\left\langle X_{r}\mathbf{b},\mathbf{b}\right\rangle ^{2}-\left\langle Y_{r}\mathbf{b},\mathbf{b}\right\rangle ^{2}\right|\leq2\left|\left\langle X_{r}\mathbf{b},\mathbf{b}\right\rangle -\left\langle Y_{r}\mathbf{b},\mathbf{b}\right\rangle \right|\leq2\left\Vert X_{r}-Y_{r}\right\Vert, 
\]
and so
\[
\left|\sigma_{X_{r}}^{2}(\mathbf{b})-\sigma_{Y_{r}}^{2}(\mathbf{b})\right|\leq4\mathrm{dist}\left(\mathcal{X},\mathcal{Y}\right).\]
Therefore
\[
\left|\sigma_{\mathcal{X}}^{2}(\mathbf{b}_{j})-\sigma_{\mathcal{X}}^{2}(\mathbf{b}_{j})\right|=\left|\sum_{r}\sigma_{X_{r}}^{2}(\mathbf{b}_j)-\sum_{r}\sigma_{Y_{r}}^{2}(\mathbf{b}_j)\right|\leq4d \cdot \mathrm{dist}\left(\mathcal{X},\mathcal{Y}\right),
\]
for all $j$.
\end{proof}

\begin{lem}
\label{lem:commuting-zero-spread} Suppose $\mathcal{Y}=\{Y_{1},\dots,Y_{d}\}$
is a set of commuting Hermitian matices on $\mathbb{C}^{n}$. There
exists an orthonormal basis $\mathcal{B}=\{\mathbf{b}_{1},\dots,\mathbf{b}_{n}\}$
of $\mathbb{C}^{n}$ for which $\mu_{\mathcal{Y}}(\mathcal{B})=0.$ 
\end{lem}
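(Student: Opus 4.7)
The plan is to use simultaneous diagonalization of commuting Hermitian matrices. Since the $Y_r$ are pairwise commuting and self-adjoint, a standard spectral-theoretic argument (iterating the spectral theorem on common eigenspaces, or invoking the spectral theorem for the commutative $C^*$-algebra they generate) produces an orthonormal basis $\mathcal{B}=\{\mathbf{b}_1,\dots,\mathbf{b}_n\}$ of $\mathbb{C}^n$ whose vectors are simultaneous eigenvectors, so that $Y_r \mathbf{b}_j = \lambda_{r,j} \mathbf{b}_j$ for real scalars $\lambda_{r,j}$.

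With such a basis in hand, the rest is a direct calculation. For each $r$ and $j$ one has $\langle Y_r \mathbf{b}_j, \mathbf{b}_j\rangle = \lambda_{r,j}$ and $\langle Y_r^2 \mathbf{b}_j, \mathbf{b}_j\rangle = \lambda_{r,j}^2$, so the single-operator variance
\[
\langle Y_r^2 \mathbf{b}_j,\mathbf{b}_j\rangle - \langle Y_r \mathbf{b}_j,\mathbf{b}_j\rangle^2
\]
is zero. Summing over $r=1,\dots,d$ gives $\sigma^2_{\mathcal{Y}}(\mathbf{b}_j)=0$ for every $j$, and taking the maximum over $j$ yields $\mu_{\mathcal{Y}}(\mathcal{B})=0$, as required.

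There is no real obstacle here; the only point worth noting is that joint diagonalizability of a finite family of commuting Hermitian matrices is precisely the content of the simultaneous spectral theorem, which is a standard linear-algebra fact and does not need any of the machinery developed earlier in the paper.
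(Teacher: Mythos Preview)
Your proof is correct and follows exactly the same approach as the paper: take an orthonormal basis of common eigenvectors for the commuting Hermitian $Y_r$ and observe that each term $\langle Y_r^2 \mathbf{b}_j,\mathbf{b}_j\rangle - \langle Y_r \mathbf{b}_j,\mathbf{b}_j\rangle^2$ vanishes. The paper's proof is just a one-line version of what you wrote.
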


\begin{proof}
We take $\mathcal{B}$ to be an orthonormal basis of common eigenvectors
of the $Y_{r}$ and then notice
\[
\left\langle Y_{r}^{2}\mathbf{b}_{j},\mathbf{b}_{j}\right\rangle -\left\langle Y_{r}\mathbf{b}_{j},\mathbf{b}_{j}\right\rangle ^{2}=0. \qedhere
\]
\end{proof}

\begin{lem}
\label{lem:EffectOfCompression} 
Suppose $W \colon \mathbb{C}^{n_{1}}\rightarrow\mathbb{C}^{n_{2}}$
is an isometry. Let $P=WW^{*}.$ Suppose $\mathcal{X}=\{X_{1},\dots,X_{d}\}$
is a set of Hermitian matices on $\mathbb{C}^{n_{2}}$ with $\left\Vert \mathcal{X}\right\Vert \leq1$
and $\left\Vert \left[X_{r},P\right]\right\Vert \leq\delta$ for all
$r$. Suppose $\mathcal{B}=\{\mathbf{b}_{1},\dots,\mathbf{b}_{k}\}$
is an orthonormal set in $\mathbb{C}^{n_{1}}$. The orthonormal set
$W\mathcal{B}=\{W\mathbf{b}_{1},\dots,W\mathbf{b}_{k}\}$ and the
set $W^{*}\mathcal{X}W=\{W^{*}X_{1}W,\dots,W^{*}X_{d}W\}$ satisfies
\[
\mu_{\mathcal{X}}(W\mathcal{B})\leq\mu_{W^{*}\mathcal{X}W}(\mathcal{B})+8d\delta.
\]
\end{lem}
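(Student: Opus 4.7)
The plan is to compare $\sigma^2_{X_r}(W\mathbf{b}_j)$ with $\sigma^2_{W^*X_rW}(\mathbf{b}_j)$ index by index, exploiting that $W$ is an isometry (so $W^*W = I$), that $P = WW^*$ satisfies $PW = W$, and that the $W\mathbf{b}_j$ are automatically orthonormal unit vectors lying in the range of $P$. Summing the per-$r$ error bounds and taking a maximum over $j$ will then produce the desired inequality.

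The first thing I would observe is that the first-moment terms match exactly: by the adjoint identity $\langle X_r W\mathbf{b}_j, W\mathbf{b}_j\rangle = \langle W^*X_rW\mathbf{b}_j, \mathbf{b}_j\rangle$, so their squares contribute nothing to the discrepancy. All of the error therefore lives in the second-moment terms. Writing $v = W\mathbf{b}_j$ and using $\|W^*u\|^2 = \langle WW^*u, u\rangle = \|Pu\|^2$, a direct computation gives
\[
\langle X_r^2 v, v\rangle - \langle (W^*X_rW)^2 \mathbf{b}_j, \mathbf{b}_j\rangle = \|X_r v\|^2 - \|PX_r v\|^2 = \|(I-P)X_r v\|^2,
\]
so the whole problem reduces to estimating $\|(I-P)X_r W\mathbf{b}_j\|$.

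To bound that compression error, I would use $PW = W$ to rewrite $(I-P)X_r v = (I-P)X_r P v$, and then apply the identity $(I-P)X_r P = (I-P)\bigl([X_r, P] + P X_r\bigr) = (I-P)[X_r, P]$, which simplifies because $(I-P)P = 0$. This gives $\|(I-P)X_r W\mathbf{b}_j\| \leq \|[X_r, P]\| \leq \delta$, and hence $|\sigma^2_{X_r}(W\mathbf{b}_j) - \sigma^2_{W^*X_r W}(\mathbf{b}_j)| \leq \delta^2$. Summing over $r = 1,\dots,d$ and maximizing over $j$ yields a total error of at most $d\delta^2$, which sits comfortably within the stated $8d\delta$ since $\delta \leq \|[X_r, P]\| \leq 2$ already forces $\delta^2 \leq 2\delta$. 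The only real subtlety of the argument is noticing that the first moments cancel exactly, concentrating all of the error into a single commutator quantity that the hypothesis directly controls; beyond that there is no serious obstacle.
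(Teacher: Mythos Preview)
Your argument is correct and in fact sharper than the paper's, yielding an error of $d\delta^{2}$ rather than $8d\delta$. The paper takes a different, more modular route: it introduces the block-diagonal perturbation $Y_{r}=PX_{r}P+(I-P)X_{r}(I-P)$, observes $\|X_{r}-Y_{r}\|\le 2\|[X_{r},P]\|\le 2\delta$, applies the continuity lemma (Lemma~\ref{lem:continuityOfSpread}) to pass from $\mathcal{X}$ to $\mathcal{Y}$ at cost $8d\delta$, and then checks that for the $P$-commuting family $\mathcal{Y}$ one has the exact identity $\mu_{\mathcal{Y}}(W\mathcal{B})=\mu_{W^{*}\mathcal{Y}W}(\mathcal{B})=\mu_{W^{*}\mathcal{X}W}(\mathcal{B})$. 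Your approach bypasses the auxiliary $Y_{r}$ entirely by noticing that the first moments agree exactly and that the discrepancy in second moments is precisely $\|(I-P)X_{r}W\mathbf{b}_{j}\|^{2}$, which the commutator hypothesis bounds directly. The paper's route has the virtue of reusing an already-stated lemma; yours is more elementary and loses less in the constants. One small wording issue: your justification ``$\delta\le\|[X_{r},P]\|\le 2$'' has the first inequality backwards --- what you mean is that since $\|\mathcal{X}\|\le 1$ and $\|P\|\le 1$ one may take $\delta\le 2$ without loss of generality (the statement being trivial otherwise), whence $d\delta^{2}\le 2d\delta\le 8d\delta$.
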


\begin{proof}
Let $Y_{r}=PX_{r}P+(I-P)X_{r}(I-P)$. Then 
\[
\left\Vert X_{r}-Y_{r}\right\Vert \leq2\left\Vert PX_{r}(I-P)\right\Vert \leq2\left\Vert \left[P,X_{r}\right]\right\Vert 
\]
so $\mathrm{dist}\left(\mathcal{X},\mathcal{Y}\right)\leq2\delta$
and therefore, by Lemma \ref{lem:continuityOfSpread},
\[
\mu_{\mathcal{X}}(W\mathcal{B})\leq\mu_{\mathcal{Y}}(W\mathcal{B})+8d\delta,
\]
where $\mathcal{Y} = \{ Y_1, Y_2, \ldots, Y_d \}$. 
For any $\mathbf{b}$ in $\mathbb{C}^{n_{1}}$ we find 
\[
\left\langle Y_{r}W\mathbf{b},W\mathbf{b}\right\rangle =\left\langle W^{*}Y_{r}W\mathbf{b},\mathbf{b}\right\rangle 
\]
and
\begin{align*}
\left\langle Y_{r}^{2}W\mathbf{b},W\mathbf{b}\right\rangle  & =\left\langle W^{*}Y_{r}^{2}PW\mathbf{b},\mathbf{b}\right\rangle \\
 & =\left\langle W^{*}Y_{r}WW^{*}Y_{r}W\mathbf{b},\mathbf{b}\right\rangle \\
 & =\left\langle \left(W^{*}Y_{r}W\right)^{2}\mathbf{b},\mathbf{b}\right\rangle .
\end{align*}
This implies
\[
\mu_{W^{*}\mathcal{Y}W}(\mathcal{B})=\mu_{\mathcal{Y}}(W\mathcal{B}).
\]
However, $W^{*}Y_{r}W=W^{*}X_{r}W$ so 
\[
\mu_{W^{*}\mathcal{X}W}(\mathcal{B})=\mu_{W^{*}\mathcal{Y}W}(\mathcal{B})=\mu_{\mathcal{Y}}(W\mathcal{B}). \qedhere
\]
\end{proof}

\begin{prop}
Suppose $P$ is a projection in $\mathbf{M}_{n}(\mathbb{C})$ and
$\hat{\mathcal{X}}=\{\hat{X}_{1},\hat{X}_{2},\hat{X}_{3},\hat{X}_{4}\}$
is a representation of $\mathcal{T}_{0}^{\prime}$ and $\|[P,\hat{X}_{r}]\|\leq\delta$
for all $r$. Let $n_{1}$ be the rank of $P$ and suppose 
$W \colon \mathbb{C}^{n_{1}}\rightarrow\mathbb{C}^{n}$
is any isometry with $WW^{*}=P$. Let $X_{r}=W^{*}\hat{X_{r}}W$ and $\mathcal{X} = \{ X_1, X_2, X_3, X_4 \}$. 
\begin{enumerate}
\item $\left\Vert \mathcal{X}\right\Vert \leq1$.
\item $\mathcal{X}=\{X_{1},X_{2},X_{3},X_{4}\}$ is a representation of
$\mathcal{T}_{2\delta}^{\prime}$.
\item If there is a representation $\mathcal{Y}$ of $\mathcal{T}_{0}^{\prime}$
with $\mathrm{dist}(\mathcal{X},\mathcal{Y})\leq\epsilon$ then there
is an orthogonal basis $\mathcal{B}$ of $P\mathbb{C}^{n}$ with 
\[
\mu_{\hat{\mathcal{X}}}(\mathcal{B})\leq8d\delta+4d\epsilon.
\]
\end{enumerate}
\end{prop}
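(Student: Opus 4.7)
The plan is to verify the three items in sequence, each being a short computation or a direct application of the preceding lemmas.

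For item (1), since $\hat{\mathcal{X}}$ is a representation of $\mathcal{T}_0'$, the relations $\hat{X}_1^2+\hat{X}_2^2=I$ and $\hat{X}_3^2+\hat{X}_4^2=I$ force each $\hat{X}_r$ to satisfy $\hat{X}_r^2\le I$, so $\|\hat{X}_r\|\le1$. Because $W$ is an isometry, $\|X_r\|=\|W^*\hat{X}_rW\|\le\|\hat{X}_r\|\le1$.

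For item (2), I will push the commutator $[P,\hat{X}_r]$ through the standard band-projection identities. Using $W^*W=I$ and $P=WW^*$, I can write $X_rX_s=W^*\hat{X}_rP\hat{X}_sW$ and $X_r^2=W^*\hat{X}_rP\hat{X}_rW$. Expanding via $P\hat{X}_r=\hat{X}_rP-[\hat{X}_r,P]$ and using the fact that $\hat{X}_r\hat{X}_s=\hat{X}_s\hat{X}_r$ together with $\hat{X}_1^2+\hat{X}_2^2=I$ and $\hat{X}_3^2+\hat{X}_4^2=I$ gives
\[
X_1X_2-X_2X_1 = W^*\bigl(\hat{X}_1[P,\hat{X}_2]-\hat{X}_2[P,\hat{X}_1]\bigr)W,
\]
\[
X_1^2+X_2^2-I = -W^*\bigl(\hat{X}_1[\hat{X}_1,P]+\hat{X}_2[\hat{X}_2,P]\bigr)W,
\]
and similarly for $X_3^2+X_4^2-I$. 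Each of these has norm at most $2\delta$ by the contractivity of $\hat{X}_r$ and $W$, which establishes the $\mathcal{T}_{2\delta}'$ relations.

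For item (3), I will chain together the three preceding lemmas. First, apply Lemma~\ref{lem:commuting-zero-spread} to the commuting set $\mathcal{Y}$ on $\mathbb{C}^{n_1}$ to produce an orthonormal basis $\mathcal{B}'=\{\mathbf{b}_1,\dots,\mathbf{b}_{n_1}\}$ of $\mathbb{C}^{n_1}$ with $\mu_{\mathcal{Y}}(\mathcal{B}')=0$. Next, since $\mathrm{dist}(\mathcal{X},\mathcal{Y})\le\epsilon$ and both sets are contractive (for $\mathcal{X}$ by item~(1), for $\mathcal{Y}$ because it represents $\mathcal{T}_0'$), Lemma~\ref{lem:continuityOfSpread} yields
\[
\mu_{\mathcal{X}}(\mathcal{B}')\le\mu_{\mathcal{Y}}(\mathcal{B}')+4d\epsilon = 4d\epsilon.
\]
Finally, set $\mathcal{B}=W\mathcal{B}'$, which is an orthonormal basis of $P\mathbb{C}^n$ because $W$ is an isometry with range $P\mathbb{C}^n$. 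Since $X_r=W^*\hat{X}_rW$ and $\|[\hat{X}_r,P]\|\le\delta$, Lemma~\ref{lem:EffectOfCompression} gives
\[
\mu_{\hat{\mathcal{X}}}(\mathcal{B}) = \mu_{\hat{\mathcal{X}}}(W\mathcal{B}')\le \mu_{W^*\hat{\mathcal{X}}W}(\mathcal{B}')+8d\delta = \mu_{\mathcal{X}}(\mathcal{B}')+8d\delta\le 8d\delta+4d\epsilon,
\]
completing the proof.

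There is no real obstacle here: items~(1) and (2) are routine band-projection estimates, and item~(3) is an immediate three-step chain through the previously established lemmas. The only thing to be careful about is matching the bookkeeping constants and confirming that $\mathcal{B}=W\mathcal{B}'$ really is an orthonormal basis of the range of $P$, which follows from $W^*W=I$ and $\mathrm{rank}(P)=n_1=\dim\mathbb{C}^{n_1}$.
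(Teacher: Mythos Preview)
Your proof is correct and follows essentially the same approach as the paper: the same band-projection identities for items~(1) and~(2), and the same chain through Lemmas~\ref{lem:commuting-zero-spread}, \ref{lem:continuityOfSpread}, and \ref{lem:EffectOfCompression} for item~(3). You have simply spelled out the details that the paper leaves to the reader, including the explicit verification that $\|\mathcal{Y}\|\le 1$ and that $W\mathcal{B}'$ is an orthonormal basis of $P\mathbb{C}^n$.
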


\begin{proof}
(1) The norm condition is easy, as it certainly is true for the $\hat{X}_{r}$. 

(2) For any $r$ and $s$,
\begin{align*}
\left\Vert X_{r}X_{s}-X_{s}X_{r}\right\Vert  & =\left\Vert W^{*}\hat{X}_{r}P\hat{X}_{s}W-W^{*}\hat{X}_{s}P\hat{X}_{r}W\right\Vert \\
 & \leq\left\Vert \hat{X}_{r}P\hat{X}_{s}-\hat{X}_{s}P\hat{X}_{r}\right\Vert \\
 & \leq\left\Vert \hat{X}_{r}P\hat{X}_{s}-\hat{X}_{r}\hat{X}_{s}P\right\Vert +\left\Vert \hat{X}_{s}\hat{X}_{r}P-\hat{X}_{s}P\hat{X}_{r}\right\Vert \\
 & \leq2\delta
\end{align*}
and, for $(r,s)$ equal $(1,2)$ or $(3,4)$, we find
\begin{align*}
 & \left\Vert X_{r}^{2}+X_{s}^{2}-I\right\Vert   \\
 & =\left\Vert W^{*}\hat{X}_{r}P\hat{X}_{r}W+W^{*}\hat{X}_{s}P\hat{X}_{s}W-W^{*}PW\right\Vert \\
 & \leq\left\Vert \hat{X}_{r}P\hat{X}_{r}+\hat{X}_{s}P\hat{X}_{s}-P\right\Vert \\
 & \leq\left\Vert \hat{X}_{r}P\hat{X}_{r}-\hat{X}_{r}^{2}P\right\Vert +\left\Vert \hat{X}_{s}P\hat{X}_{s}-\hat{X}_{s}^{2}P\right\Vert +\left\Vert \hat{X}_{r}^{2}P+\hat{X}_{s}^{2}P-P\right\Vert \\
 & \leq2\delta.
\end{align*}

(3) Follows from (1), (2) and Lemmas \ref{lem:continuityOfSpread}, \ref{lem:commuting-zero-spread}, and \ref{lem:EffectOfCompression}.
\end{proof}

\begin{defn}
If we had made a different choice of partial isometry, $W_{1}$, then
the resulting $4$-tuples would be unitarily equivalent to the first,
since $W^{*}W_{1}$ is a unitary. Therefore, the Bott index,
\[
\mathrm{Bott}(X_{1}+iX_{2},X_{3}+iX_{4})
\]
does not depend on the choice of $W$. We therefore define
\[
\mathrm{Bott}(P;\hat{X}_{1},\hat{X}_{2},\hat{X}_{3},\hat{X}_{4})=\mathrm{Bott}(X_{1}+iX_{2},X_{3}+iX_{4})
\]
so long as $\delta$ is small enough to ensure a spectral gap in the
Bott matrix $B(H_1,H_2,H_3)$.
\end{defn}

In this form, depending on a projection almost commuting with commuting
matrices, our Bott index is essentially the same as the Chern number
as calculated on finite samples, in \cite{prodan2011disordered}.

If the projection $P$ and the $\hat{X}_{r}$ are real, then we can
select the isometry $W$ to be real which means that the $X_{r}$
will be real. 

If the projection $P$ and the $\hat{X}_{r}$ are self-dual in $\mathbf{M}_{2N}(\mathbb{C})$,
we can use the spectral theorem for normal quaternionic matrices to
find an isometry $W$ with $WW^{*}=P$ and $W^{*}=-Z_{N_{1}}W^{\mathrm{T}}Z_{N}$.
This means\[
X_{r}^{\sharp}=-Z_{N_{1}}W^{\mathrm{T}}\hat{X_{r}}\overline{W}Z_{N_{1}}=-W^{*}Z_{N}\hat{X_{r}}Z_{N}W=W^{*}\hat{X_{r}}W,
\]
so the $X_{r}$ are self-dual.
We can unambiguously define
\[
\mathrm{Pf}\!\mathrm{-}\!\mathrm{Bott}(P;\hat{X}_{1},\hat{X}_{2},\hat{X}_{3},\hat{X}_{4})=\mathrm{Pf}\!\mathrm{-}\!\mathrm{Bott}(X_{1}+iX_{2},X_{3}+iX_{4}).
\]

We finish then with corollaries to the theorems in \S \ref{sub:Main-theorems}
about a lattice on a torus. Similar results are true for lattices
on a sphere, but these are less popular in physics.
\begin{thm}
For every $\epsilon>0$, there is a $\delta$ in $(0,\tfrac{1}{8})$
so that, for every $4$-tuple of commuting Hermitian matrices $\hat{X}_{1},\hat{X}_{2},\hat{X}_{3},\hat{X}_{4}$
in $\mathbf{M}_{n}(\mathbb{C})$ with\[
\hat{X}_{1}^{2}+\hat{X}_{2}^{2}=\hat{X}_{3}^{2}+\hat{X}_{4}^{2}=I,\]
 if $P$ is a projection with $\|[P,\hat{X}_{r}]\|\leq\delta$ for
all $r$, and if \[
\mathrm{Bott}(P;\hat{X}_{1},\hat{X}_{2},\hat{X}_{3},\hat{X}_{4})=0,\]
 then there there is an orthonormal basis $\mathbf{b}_{1},\dots,\mathbf{b}_{n_{1}}$
of the subspace $P\mathbb{C}^{n}$ so that\[
\left\langle \hat{X}_{r}^{2}\mathbf{b}_{j},\mathbf{b}_{j}\right\rangle -\left\langle \hat{X}_{r}\mathbf{b}_{j},\mathbf{b}_{j}\right\rangle ^{2}\leq\epsilon\]
 for all $r$ and all $j$.
\end{thm}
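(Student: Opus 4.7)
The plan is to compress to the range of the projection and then reduce to the main torus theorem, finally pulling the resulting basis back up and applying the Wannier-spread estimates of the preceding proposition. First, choose an isometry $W\colon\mathbb{C}^{n_{1}}\to\mathbb{C}^{n}$ with $WW^{*}=P$ and set $X_{r}=W^{*}\hat{X}_{r}W$. Parts (1) and (2) of the immediately preceding proposition show that $\mathcal{X}=\{X_{1},X_{2},X_{3},X_{4}\}$ is a representation of $\mathcal{T}_{2\delta}^{\prime}$ with $\|\mathcal{X}\|\leq 1$, and by the very definition of $\mathrm{Bott}(P;\hat{X}_{1},\dots,\hat{X}_{4})$, we have $\mathrm{Bott}(X_{1}+iX_{2},X_{3}+iX_{4})=0$.

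Next, the plan is to convert from the four-Hermitian form $\mathcal{T}_{2\delta}^{\prime}$ to the two-unitary form $\mathcal{T}_{\eta}$ so that Theorem~\ref{thm:torusComplex} applies. Set $U_{1}=X_{1}+iX_{2}$ and $U_{2}=X_{3}+iX_{4}$. Direct estimates using $\|X_{r}X_{s}-X_{s}X_{r}\|\leq 2\delta$ and $\|X_{1}^{2}+X_{2}^{2}-I\|,\|X_{3}^{2}+X_{4}^{2}-I\|\leq 2\delta$ give $\|U_{k}U_{k}^{*}-I\|\leq 4\delta$ and $\|U_{1}U_{2}-U_{2}U_{1}\|\leq 4\delta$. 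Replacing each $U_{k}$ by $\mathrm{polar}(U_{k})$ (after an arbitrarily small perturbation to ensure invertibility), we obtain unitaries $\tilde{U}_{k}$ close to $U_{k}$ that still approximately commute, and by the convention that the Bott index is computed on polar parts, $\mathrm{Bott}(\tilde{U}_{1},\tilde{U}_{2})=0$. Now Theorem~\ref{thm:torusComplex} produces commuting unitaries $V_{1},V_{2}$ with $\|V_{k}-\tilde{U}_{k}\|$ as small as desired.

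Finally, write $V_{1}=Y_{1}+iY_{2}$ and $V_{2}=Y_{3}+iY_{4}$ with Hermitian real and imaginary parts. Because $V_{1}$ and $V_{2}$ are commuting unitaries, $V_{1}^{*}$ commutes with $V_{2}$ as well, so the four matrices $Y_{r}$ are pairwise commuting Hermitians, and from $V_{k}V_{k}^{*}=I$ one reads off $Y_{1}^{2}+Y_{2}^{2}=Y_{3}^{2}+Y_{4}^{2}=I$ exactly. Thus $\mathcal{Y}=\{Y_{1},\dots,Y_{4}\}$ is a representation of $\mathcal{T}_{0}^{\prime}$, and $\mathrm{dist}(\mathcal{X},\mathcal{Y})$ can be made as small as desired by choosing $\delta$ small. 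Part (3) of the preceding proposition then yields an orthonormal basis $\mathcal{B}=\{\mathbf{b}_{1},\dots,\mathbf{b}_{n_{1}}\}$ of $P\mathbb{C}^{n}$ with
\[
\mu_{\hat{\mathcal{X}}}(\mathcal{B})\leq 8\cdot 4\cdot\delta+4\cdot 4\cdot\mathrm{dist}(\mathcal{X},\mathcal{Y}).
\]
Since $\sigma_{\hat{\mathcal{X}}}^{2}(\mathbf{b}_{j})$ is a sum over $r$ of the individual variances $\langle\hat{X}_{r}^{2}\mathbf{b}_{j},\mathbf{b}_{j}\rangle-\langle\hat{X}_{r}\mathbf{b}_{j},\mathbf{b}_{j}\rangle^{2}$, bounding each of these by $\epsilon$ is strictly easier than bounding the sum by $4\epsilon$, so choosing $\delta\in(0,\tfrac{1}{8})$ small and using Theorem~\ref{thm:torusComplex} to control $\mathrm{dist}(\mathcal{X},\mathcal{Y})$ finishes the argument.

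The main obstacle is not conceptual but quantitative bookkeeping: Theorem~\ref{thm:torusComplex} only gives qualitative control ($\mathrm{Bott}=0$ and small commutator imply close to commuting), so one must chase the $\epsilon$-$\delta$ quantification through the polar-part substitution and the decomposition $V_{k}=Y_{2k-1}+iY_{2k}$, then balance it against the compression error $32\delta$ coming from $\|[P,\hat{X}_{r}]\|\leq\delta$. A secondary point requiring care is the justification that $\mathrm{Bott}(X_{1}+iX_{2},X_{3}+iX_{4})$ is unchanged by replacement with polar parts and by the small perturbation needed to make $U_{k}$ invertible; this is handled by the standard observation that the Bott index is locally constant on the set where the associated $B$-matrix has a spectral gap.
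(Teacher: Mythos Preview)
Your proposal is correct and follows exactly the route the paper intends: these three theorems are stated as corollaries to the main theorems in \S\ref{sub:Main-theorems} combined with the preceding proposition, and the paper gives no further proof. Your write-up simply fills in the details of that combination---compress by $W$, invoke parts (1) and (2) of the proposition, apply Theorem~\ref{thm:torusComplex}, split the resulting commuting unitaries into Hermitian parts, and finish with part (3).

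One small simplification: the perturbation ``to ensure invertibility'' of $U_{k}$ is unnecessary, since $\|U_{k}U_{k}^{*}-I\|\leq 4\delta<1$ already forces $U_{k}U_{k}^{*}$, and hence $U_{k}$, to be invertible.
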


\begin{thm}
For every $\epsilon>0$, there is a $\delta > 0$ 
so that, for every $4$-tuple of commuting Hermitian matrices $\hat{X}_{1},\hat{X}_{2},\hat{X}_{3},\hat{X}_{4}$
in $\mathbf{M}_{n}(\mathbb{R})$ with\[
\hat{X}_{1}^{2}+\hat{X}_{2}^{2}=\hat{X}_{3}^{2}+\hat{X}_{4}^{2}=I,\]
 if $P$ is a real projection with $\|[P,\hat{X}_{r}]\|\leq\delta$
for all $r$, then there there is an orthonomal basis $\mathbf{b}_{1},\dots,\mathbf{b}_{n_{1}}$
of the subspace $P\mathbb{R}^{n}$ so that\[
\left\langle \hat{X}_{r}^{2}\mathbf{b}_{j},\mathbf{b}_{j}\right\rangle -\left\langle \hat{X}_{r}\mathbf{b}_{j},\mathbf{b}_{j}\right\rangle ^{2}\leq\epsilon\]
 for all $r$ and $j$.
\end{thm}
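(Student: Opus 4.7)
The plan is to execute the real analogue of the preceding complex theorem, with Theorem~\ref{thm:torusReal}---which carries no Bott index hypothesis---replacing Theorem~\ref{thm:torusComplex}. The essential additional point is to keep all objects real symmetric, or complex symmetric after forming unitaries, throughout the compression and approximation steps.

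Given the hypotheses, the spectral theorem for real symmetric projections supplies a real isometry $W\colon\mathbb{R}^{n_1}\to\mathbb{R}^{n}$ with $WW^{*}=P$, where $n_1$ is the rank of $P$. Setting $X_r=W^{*}\hat{X}_r W$ gives real symmetric matrices, and parts (1) and (2) of the preceding proposition then show that $\mathcal{X}=\{X_1,X_2,X_3,X_4\}$ is a representation of $\mathcal{T}_{2\delta}^{\prime}$. To apply Theorem~\ref{thm:torusReal} I would form $U_1=X_1+iX_2$ and $U_2=X_3+iX_4$, which are complex symmetric and approximately unitary with errors of order $\delta$. Using the Autonne--Takagi factorization $U_r=W_r\Sigma_r W_r^{\mathrm{T}}$, with $W_r$ unitary and $\Sigma_r$ nonnegative diagonal close to $I$, I replace $U_r$ by the symmetric unitary $\tilde{U}_r=W_r W_r^{\mathrm{T}}$ at cost of order $\delta$, producing an honest representation of $\mathcal{T}_{\eta}$ by complex symmetric unitaries with $\eta\to 0$ as $\delta\to 0$. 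Theorem~\ref{thm:torusReal} then supplies commuting complex symmetric unitaries $V_1,V_2$ within $\epsilon/2$ of the $\tilde{U}_r$, provided $\delta$ is small enough.

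To close the loop I would decompose $V_r=Y_{2r-1}+iY_{2r}$ into Hermitian parts; because $V_r$ is complex symmetric, each $Y_s$ is simultaneously Hermitian and symmetric, hence real symmetric. Since $V_1$ and $V_2$ are commuting unitaries, comparing $V_1V_2=V_2V_1$ with $V_1^{*}V_2=V_2V_1^{*}$ forces all four $Y_s$ to commute pairwise and to satisfy $Y_1^2+Y_2^2=Y_3^2+Y_4^2=I$, so $\mathcal{Y}=\{Y_1,\ldots,Y_4\}$ is a representation of $\mathcal{T}_0^{\prime}$ with $\mathrm{dist}(\mathcal{X},\mathcal{Y})$ of order $\delta+\epsilon/2$. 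Part~(3) of the preceding proposition then produces an orthonormal basis $\mathcal{B}$ of $P\mathbb{C}^{n}$ whose maximum Wannier spread with respect to $\hat{\mathcal{X}}$ is at most $\epsilon$, once $\delta$ is chosen small enough in terms of $\epsilon$. Since the commuting real symmetric $Y_s$ admit a common \emph{real} eigenbasis, and $W$ is real, that basis lives inside $P\mathbb{R}^{n}$.

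The main obstacle is the conversion from $\mathcal{T}_{2\delta}^{\prime}$ to $\mathcal{T}_{\eta}$ while preserving complex symmetry: the ordinary polar decomposition $U\mapsto U(U^{*}U)^{-1/2}$ does not respect the relation $U=U^{\mathrm{T}}$, which is exactly why Autonne--Takagi is the correct unitarization. Everything else is a direct translation of the complex argument, with the crucial simplification that the real form of the torus approximation theorem carries no $K$-theoretic obstruction.
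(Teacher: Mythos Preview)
Your approach is correct and matches the paper's implied argument (the paper states these three theorems as corollaries without detailed proofs, relying on the proposition and lemmas of Section~\ref{sec:-WannierSpread} together with the main torus theorems). The real isometry $W$, the compression step, the appeal to Theorem~\ref{thm:torusReal}, and the extraction of a real common eigenbasis from commuting real symmetric $Y_s$ are all as intended.

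One correction: your stated ``main obstacle'' is not actually an obstacle. The polar part \emph{does} respect $U=U^{\mathrm{T}}$. Indeed, if $U^{\mathrm{T}}=U$ then $U^{*}=\overline{U}$, so $(U^{*}U)^{\mathrm{T}}=UU^{*}$, and the standard intertwining identity $U(U^{*}U)^{-1/2}=(UU^{*})^{-1/2}U$ gives $\mathrm{polar}(U)^{\mathrm{T}}=\mathrm{polar}(U)$. This is exactly the content of the observation $\bigl(\mathrm{polar}(x)\bigr)^{\tau}=\mathrm{polar}(x^{\tau})$ recorded in the proof of Lemma~\ref{lem:SymmetryInPolars}. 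So ordinary polar unitarization already produces complex symmetric unitaries, and the Autonne--Takagi detour, while valid, is unnecessary.
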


\begin{thm}
For every $\epsilon>0$, there is a $\delta$ in $(0,\tfrac{1}{8})$
so that, for every $4$-tuple of commuting Hermitian, self-dual matrices
$\hat{X}_{1},\hat{X}_{2},\hat{X}_{3},\hat{X}_{4}$ in $\mathbf{M}_{2N}(\mathbb{C})$
with\[
\hat{X}_{1}^{2}+\hat{X}_{2}^{2}=\hat{X}_{3}^{2}+\hat{X}_{4}^{2}=I,\]
 if $P$ is a self-dual projection with $\|[P,\hat{X}_{r}]\|\leq\delta$
for all $r$, and if \[
\mathrm{Pf}\!\mathrm{-}\!\mathrm{Bott}(P;\hat{X}_{1},\hat{X}_{2},\hat{X}_{3},\hat{X}_{4})=1,\]
 then there are vectors $\mathbf{b}_{1},\dots,\mathbf{b}_{N_{1}}$
so that \[
\mathbf{b}_{1},\dots,\mathbf{b}_{N_{1}},\mathcal{T}\mathbf{b}_{1},\dots,\mathcal{T}\mathbf{b}_{N_{1}}\]
is an orthonormal basis of the subspace $P\mathbb{C}^{2N}$ and\[
\left\langle \hat{X}_{r}^{2}\mathbf{b}_{j},\mathbf{b}_{j}\right\rangle -\left\langle \hat{X}_{r}\mathbf{b}_{j},\mathbf{b}_{j}\right\rangle ^{2}\leq\epsilon\]
 for all $r$ and all $j$.
\end{thm}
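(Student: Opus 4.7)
The strategy is to follow the preceding proposition almost verbatim, but threading the self-dual/time-reversal structure through each step so as to produce a Kramers-paired basis. Throughout, let $d=4$.

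First, I would construct the compression. Because $P$ is a self-dual projection on $\mathbb{C}^{2N}$, its range has even complex dimension $2N_1$ and is $\mathcal{T}$-invariant; using the spectral theorem for normal quaternionic matrices, choose an isometry $W\colon\mathbb{C}^{2N_1}\to\mathbb{C}^{2N}$ with $WW^{*}=P$ and $W^{*}=-Z_{N_1}W^{\mathrm{T}}Z_{N}$, so $W$ intertwines the time-reversal operators on the two spaces. As in the excerpt's computation just before the theorem, the matrices $X_{r}=W^{*}\hat{X}_{r}W$ are then self-dual and, by part (2) of the preceding proposition, form a representation of $\mathcal{T}_{2\delta}^{\prime}$ in $\mathbf{M}_{2N_1}(\mathbb{C})$.

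Second, I would switch to the unitary picture. Set $U_{1}=\mathrm{polar}(X_{1}+iX_{2})$ and $U_{2}=\mathrm{polar}(X_{3}+iX_{4})$; for $\delta$ small these are defined, self-dual (since the polar part commutes with the dual operation), and form a representation of $\mathcal{T}_{\eta}$ for some $\eta\to 0$ as $\delta\to 0$, close to the original $X_r+iX_{r+1}$. By the definition of $\mathrm{Pf}\!\mathrm{-}\!\mathrm{Bott}(P;\hat{X}_{1},\dots,\hat{X}_{4})$ this index equals $\mathrm{Pf}\!\mathrm{-}\!\mathrm{Bott}(U_{1},U_{2})$, which by hypothesis is $1$. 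Apply Theorem~\ref{thm:torusQuaternion} to obtain self-dual commuting unitaries $V_{1},V_{2}$ with $\|V_{r}-U_{r}\|$ arbitrarily small, and hence arbitrarily close to $X_{1}+iX_{2}$ and $X_{3}+iX_{4}$ respectively.

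Third, I would diagonalize simultaneously in a Kramers-paired fashion. Writing $V_{1}=Y_{1}+iY_{2}$, $V_{2}=Y_{3}+iY_{4}$, commutativity and normality of $V_{1},V_{2}$ force $Y_{1},Y_{2},Y_{3},Y_{4}$ to be mutually commuting self-dual Hermitian matrices satisfying $Y_{1}^{2}+Y_{2}^{2}=Y_{3}^{2}+Y_{4}^{2}=I$. Because each $Y_{r}$ is self-dual, each commutes with $\mathcal{T}_{N_1}$; the simultaneous eigenspaces of the $Y_{r}$ are therefore $\mathcal{T}_{N_1}$-invariant. Since $\mathcal{T}_{N_1}^{2}=-I$ on each such eigenspace, a standard quaternionic basis argument produces vectors $\mathbf{b}_{1},\dots,\mathbf{b}_{N_1}$ so that $\mathcal{B}=\{\mathbf{b}_{1},\dots,\mathbf{b}_{N_1},\mathcal{T}_{N_1}\mathbf{b}_{1},\dots,\mathcal{T}_{N_1}\mathbf{b}_{N_1}\}$ is an orthonormal basis of $\mathbb{C}^{2N_1}$ consisting of joint eigenvectors of $\mathcal{Y}=\{Y_{1},Y_{2},Y_{3},Y_{4}\}$. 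By Lemma~\ref{lem:commuting-zero-spread}, $\mu_{\mathcal{Y}}(\mathcal{B})=0$.

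Finally I would push $\mathcal{B}$ through $W$ and assemble the estimates. The set $W\mathcal{B}=\{W\mathbf{b}_{1},\dots,W\mathbf{b}_{N_1},W\mathcal{T}_{N_1}\mathbf{b}_{1},\dots,W\mathcal{T}_{N_1}\mathbf{b}_{N_1}\}$ is orthonormal in $P\mathbb{C}^{2N}$, and because $W$ intertwines $\mathcal{T}_{N_1}$ and $\mathcal{T}_{N}$, we have $W\mathcal{T}_{N_1}\mathbf{b}_{j}=\mathcal{T}_{N}W\mathbf{b}_{j}$, giving the Kramers-paired form required by the theorem. The Wannier-spread bound then comes from chaining the three lemmas: Lemma~\ref{lem:EffectOfCompression} gives $\mu_{\hat{\mathcal{X}}}(W\mathcal{B})\leq\mu_{W^{*}\hat{\mathcal{X}}W}(\mathcal{B})+8d\delta=\mu_{\mathcal{X}}(\mathcal{B})+8d\delta$, and Lemma~\ref{lem:continuityOfSpread} gives $\mu_{\mathcal{X}}(\mathcal{B})\leq\mu_{\mathcal{Y}}(\mathcal{B})+4d\,\mathrm{dist}(\mathcal{X},\mathcal{Y})$, where $\mathrm{dist}(\mathcal{X},\mathcal{Y})$ can be made arbitrarily small by Theorem~\ref{thm:torusQuaternion}; together with $\mu_{\mathcal{Y}}(\mathcal{B})=0$ this gives $\mu_{\hat{\mathcal{X}}}(W\mathcal{B})\leq\epsilon$ for $\delta$ sufficiently small.

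The main obstacle is the coherent handling of the symplectic structure at each transition: choosing $W$ to intertwine time reversal so that the compression preserves self-duality, verifying that polar decomposition and the approximation in Theorem~\ref{thm:torusQuaternion} both preserve self-duality, and producing a joint eigenbasis of the $Y_r$ with the required Kramers-paired form. All three steps rely on self-duality being closed under the operations used, together with the quaternionic spectral theorem.
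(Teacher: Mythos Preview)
Your proposal is correct and follows exactly the route the paper intends: the paper does not give a separate proof of this theorem but presents it (together with the two preceding theorems) as a direct corollary of the main Theorems in \S\ref{sub:Main-theorems}, combined with the preceding proposition and the discussion of the self-dual isometry $W$. Your write-up supplies precisely the details the paper leaves implicit---compressing via a structure-preserving $W$, passing to self-dual unitaries by the polar part, invoking Theorem~\ref{thm:torusQuaternion}, and then producing a Kramers-paired joint eigenbasis from the commuting self-dual $Y_r$ via the quaternionic spectral theorem before chaining Lemmas~\ref{lem:continuityOfSpread}--\ref{lem:EffectOfCompression}. One minor remark: from $W^{*}=-Z_{N_1}W^{\mathrm{T}}Z_{N}$ the intertwining relation comes out as $\mathcal{T}_{N}W=-W\mathcal{T}_{N_1}$ rather than with a plus sign, but this is harmless since $\{W\mathbf{b}_j,\pm\mathcal{T}_{N}W\mathbf{b}_j\}$ is the same orthonormal set either way.
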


\section{Discussion}

Our results on localization have very minimal assumptions. We do not
require that the projection $P$ arises from a gap in the
specrtrum of the Hamiltonian. We also allow that the distribution
of sites within the torus can be very irregular. We pay for this with
very weak localization in the conclusion. In contrast, one can study
specific models and find exponential decay in a basis made from time-reversal
pairs, see \cite{soluyanov2011wannier}. 

Our results might have applications in signal processing in areas such
as hypercomplex processes  \cite{took2009quaternion} and blind source
separation \cite{cardosoSimultanDiagn,waxJointDiag}.

\section*{Acknowledgments}

\thanks{We thank Matthew Hastings for his suggestions regarding the present
article and for his role in laying out a program relating the theory
of almost commuting matrices to condensed matter physics.

This research was supported by the Danish National Research Foundation (DNRF)
through the Centre for Symmetry and Deformation.}

\end{document}